\documentclass[10pt]{article}
\RequirePackage[left=28mm,right=28mm,top=30mm,bottom=30mm]{geometry}
\usepackage[utf8]{inputenc}
\usepackage[english]{babel}
\usepackage{amsmath}
\usepackage{amssymb}
\usepackage{amsthm}
\usepackage[shortlabels]{enumitem} 
\usepackage{natbib}
\usepackage{afterpage}
\usepackage{graphicx} 
\usepackage{tikz-cd} 
\usepackage{stmaryrd} 
\usepackage{url}
\usepackage{hyperref}
\usepackage{multicol}

\usetikzlibrary{shapes.geometric,arrows.meta}   

\definecolor{projcol}{RGB}{0,85,160}    
\definecolor{tiltcol}{RGB}{200,30,60}   
\definecolor{injcol}{RGB}{0,125,70}     

\tikzset{
	stdmark/.style   ={regular polygon, regular polygon sides=3, draw=black!75,
		line width=.6pt, inner sep=0pt},
	costdmark/.style ={regular polygon, regular polygon sides=3, shape border rotate=180,
		draw=black!75, line width=.6pt, inner sep=0pt},
	mod/.style       ={inner sep=2pt, font=\normalsize},
	ar/.style        ={-{Stealth[length=2mm]}, shorten >=2pt, shorten <=2pt},
	taua/.style      ={dashed, -{Stealth[length=2mm]}, gray, shorten >=2pt, shorten <=2pt},
}

\newcommand{\End}{\operatorname{End}}
\newcommand{\Hom}{\operatorname{Hom}}
\newcommand{\gldim}{\operatorname{gldim}}
\newcommand{\injdim}{\operatorname{injdim}}
\newcommand{\Ext}{\operatorname{Ext}}
\newcommand{\Tor}{\operatorname{Tor}}
\newcommand{\add}{\!\operatorname{add}}

\newcommand{\m}{\!\operatorname{-mod}} 

\newcommand{\proj}{\!\operatorname{-proj}}
\newcommand{\inj}{\!\operatorname{-inj}}

\newcommand{\id}{\!\operatorname{id}}
\newcommand{\St}{\Delta}
\newcommand{\gen}{\operatorname{Gen}}
\newcommand{\rad}{\operatorname{rad}}
\newcommand{\cogen}{\operatorname{Cogen}}
\newcommand{\Cs}{\nabla}
\renewcommand{\L}{\Lambda}
\renewcommand{\l}{\lambda}
\newcommand{\Tr}{\operatorname{Tr}}
\renewcommand{\Im}{\operatorname{Im}}
\renewcommand{\top}{\operatorname{top}}

\newcommand{\dv}{\underline{\dim}\,}          
\newcommand{\Loe}[1]{\begin{matrix}#1\end{matrix}}   

\newtheorem{numberingthm}{Theorem}[section] 
\theoremstyle{definition}
\newtheorem{Def}[numberingthm]{Definition}
\newtheorem{example}[numberingthm]{Example}

\theoremstyle{plain}
\newtheorem{Prop}[numberingthm]{Proposition}
\newtheorem{Theorem}[numberingthm]{Theorem}

\newtheorem{Cor}[numberingthm]{Corollary}
\newtheorem{Lemma}[numberingthm]{Lemma}

\newtheorem{Remark}[numberingthm]{Remark}
\newtheorem{claim}{Claim}
\newtheorem{thmintroduction}{Theorem}
\newtheorem{corintroduction}{Corollary}

\newenvironment{claimproof}[1][Proof of claim]
{\begin{proof}[#1]}
	{\end{proof}}

\newenvironment{Example}
{\pushQED{\qed}\example}
{\popQED\endexample}

\providecommand{\keywords}[1]
{\scriptsize
	\textbf{\textit{Keywords:}} #1 \normalsize \hfill
}
\providecommand{\msc}[1]
{\scriptsize
	\textbf{\textit{2020 Mathematics Subject Classification:}} #1 \normalsize \hfill
}

\title{\vspace*{-1cm}Ringel self-duality of hereditary algebras \\via Auslander-Reiten theory} 
\author{Tiago Cruz} 
\date{}

\newcommand{\Address}{{
		\bigskip
		\footnotesize
		
		TIAGO CRUZ,\par \textsc{Institute of Algebra and Number Theory}\par \textsc{University of Stuttgart,}\par \textsc{Pfaffenwaldring 57, 70569 Stuttgart, Germany,}\par\nopagebreak
		\textit{E-mail address}, T.~Cruz: \texttt{tiago.cruz@mathematik.uni-stuttgart.de}
		}}
		
		\allowdisplaybreaks

\begin{document}

\maketitle

\begin{abstract}Highest weight categories with finitely many simples correspond to quasi-hereditary algebras. Ringel duality shows that, up to Morita equivalence, quasi-hereditary algebras come in pairs, while Ringel self-duality is the phenomenon in which a quasi-hereditary algebra is paired with itself.
The main purpose of this paper is to show that there are connected non-semisimple hereditary algebras that are Ringel self-dual and to give a combinatorial interpretation of Ringel self-duality in terms of Auslander-Reiten theory. 

Let $A$ be a basic connected hereditary algebra over an algebraically closed field $k$ equipped with a quasi-hereditary structure. We show that, if $A$ is of finite representation type and Ringel self-dual, then the indecomposable summands of the characteristic tilting module $T$ are obtained from the indecomposable projectives, and from indecomposable injectives by applying powers of the Auslander-Reiten translation, in a way governed by an automorphism of the underlying Dynkin diagram. In particular, the number of indecomposable $A$-modules has the same parity as the number of simple $A$-modules. Conversely, we show that if $T\cong \tau^{-n}(A)\cong \tau^n\Hom_k(A, k)$ for some natural number $n$, then $A$ is Ringel self-dual and of finite representation type.

Using these results, we determine the simply laced Dynkin diagrams admitting an orientation and a quasi-hereditary structure for which the corresponding path algebra is Ringel self-dual: they are exactly those of type $D_{2m}$ with $m\geq 2$ and $A_{4t+1}$ with $t\geq 0$. In particular, no exceptional type occurs, 
and no connected non-semisimple hereditary  Nakayama   algebra is Ringel self-dual. 
\end{abstract}
 \keywords{Ringel self-duality, hereditary algebra, characteristic tilting module, Auslander-Reiten quiver, Dynkin quiver, $\tau$-orbit. \\ \msc{Primary 16G70; Secondary 16G10, 16G20, 16E60}}

\section{Introduction}\enlargethispage{\baselineskip}

Quivers and their representations appear in many areas of mathematics and path algebras of finite acyclic quivers are the standard examples of hereditary algebras. A finite-dimensional algebra is called \emph{hereditary} if its global dimension is at most one or equivalently if every submodule of a projective module is projective. Over algebraically closed fields, hereditary connected algebras are Morita equivalent to the path algebras of finite, connected, acyclic quivers (see for example \citep[VII, Theorem 1.7]{zbMATH02228448}). 

In modern representation theory, hereditary  finite-dimensional algebras played an important role in its development and they are among the best-understood classes of finite-dimensional algebras. Moreover, their representation theory is closely connected with root systems and Dynkin diagrams through Gabriel's theorem.
Gabriel's theorem \cite{zbMATH03366955} states that a path algebra of a finite acyclic quiver has finitely many indecomposable modules, up to isomorphism, if and only if its underlying graph is a simply laced Dynkin diagram (that is of type $A$, $D$ or $E$). Soon after Gabriel's theorem, in \cite{zbMATH03438937}, Bernstein, Gelfand and Ponomarev introduced reflection functors, providing a new proof of Gabriel's theorem relating path algebras arising from different orientations of the same underlying Dynkin graph. On dimension vectors, these functors correspond to the simple reflections generating the Weyl group of the underlying graph. The reflection functors were the starting point of tilting theory. Tilting theory in turn brought derived categories into representation theory (see for instance \cite{zbMATH00040982}). Hereditary algebras and Dynkin quivers also provided developments in other directions, namely connecting representation theory with Lie theory and algebraic combinatorics. For instance, Ringel showed in \cite{zbMATH00019935} that Hall algebras of Dynkin quivers realise positive parts of quantised enveloping algebras while cluster categories built from hereditary algebras categorify cluster algebras \cite{zbMATH05047155}. 

Motivated by highest weight categories arising in Lie theory, Cline Parshall and Scott introduced in \cite{MR961165} the concept of a quasi-hereditary algebra.
In \cite{Dlab1989d}, Dlab and Ringel proved that hereditary algebras admit a quasi-hereditary structure and furthermore that these algebras are exactly the ones which are quasi-hereditary for every choice of total order on the set of non-isomorphic simple modules. 
Roughly speaking, a finite-dimensional algebra is quasi-hereditary if it  admits a quasi-hereditary structure, encoded by a suitable partial order on the set of isomorphism classes of simple $A$-modules, or equivalently by the resulting collection of standard modules. Two such structures on $A$ are regarded as equivalent when their standard modules coincide, equivalently when they determine the same category of modules filtered by standard modules. 
A fundamental feature of the theory of quasi-hereditary algebras is Ringel duality (see for instance \cite{MR1128706, zbMATH06894495, zbMATH07073901, zbMATH07807561} and the references therein.) Associated with every quasi-hereditary algebra $A$ there is another quasi-hereditary algebra, its Ringel dual $R$, defined up to Morita equivalence as the endomorphism algebra of the (basic) characteristic tilting module and the Ringel dual of $R$ is Morita equivalent to $A$.  In particular,  $A$ and its Ringel dual have equivalent derived categories.
  In algebraic Lie theory, the direct summands of characteristic tilting modules have been studied intensively both as tools and objects of interest in their own right. For example, Donkin studied them for reductive groups and Schur algebras \cite{zbMATH00549737}, Andersen for quantum groups at roots of unity \cite{zbMATH00095698} and Riche and Williamson expressed in \cite{zbMATH06898308} their character formulas for $\mathrm{GL}_n$ in terms of the $p$-canonical basis.

While Ringel duality always produces a derived equivalent algebra, Ringel self-duality is a stronger phenomenon in which the Ringel dual is already Morita equivalent to the original algebra.
	 Indeed, we say that $A$ is \emph{Ringel self-dual} if $A$ and its Ringel dual are Morita equivalent in such a way that their quasi-hereditary structures are equivalent (see Subsection \ref{The characteristic tilting module and Ringel duality} below). 
 There are many prominent examples of Ringel self-duality especially in algebraic Lie theory. Examples include many families of Schur and $q$-Schur algebras \cite{zbMATH00549737, zbMATH01235890} \cite{zbMATH01719244} (see also \cite{zbMATH07679240}), the blocks of the BGG category $\mathcal{O}$ of a semi-simple complex Lie algebra \citep[Corollary 2.3]{zbMATH01267662}, \cite{zbMATH01542484} (see also \cite{zbMATH07957365}), quasi-hereditary algebras related to Schubert polynomials \cite{zbMATH06636979}, extended zigzag Schur algebras \cite{zbMATH07839654}, Auslander algebras of self-injective Nakayama algebras \cite{zbMATH06369685} and generalisations (see also \cite{zbMATH07073901, zbMATH06893122}).

Surprisingly it appears  that the problem of Ringel self-duality for hereditary algebras has not been addressed in the literature so far. Since   the Ringel dual of a hereditary algebra is generally a tilted algebra (not necessarily hereditary), this raises the following question:
\begin{enumerate}[(Q)]
	\item Does Ringel self-duality occur for (non-semisimple) hereditary algebras? If so, what type of symmetry does Ringel self-duality impose on the Gabriel quiver and the Auslander-Reiten quiver?
\end{enumerate}

The aim of this paper is to show that Ringel self-duality does occur for connected hereditary algebras and to give a combinatorial interpretation of this phenomenon in terms of the Auslander-Reiten quiver. In finite representation type, Ringel self-duality imposes a symmetry on the positions of the direct summands of the characteristic tilting module, governed by an automorphism of the underlying Dynkin diagram. At the same time, it partitions the indecomposable modules into three disjoint classes: those generated by $\tau^{-1}T$, the indecomposable summands of $T$, and those cogenerated by $\tau T$, where $T$ is the characteristic tilting module and $\tau$ is the Auslander-Reiten translation.  Our main theorem can be stated as follows: \enlargethispage{1.2\baselineskip}

\begin{thmintroduction}\label{maintheorem} (see Lemma \ref{QuiveraddT}, Proposition \ref{counting} and Theorem \ref{maintheoremoneimplication})
Let $(A, \Lambda, \leq)$ be a (connected and basic) quasi-hereditary $k$-algebra with $\gldim A\leq 1$ and $k$ an algebraically closed field. Let $T$ be the (basic) characteristic tilting module and let $P(i)$ (resp. $I(i)$) be the projective (resp. injective) indecomposable module associated with $i\in \L$. The following assertions hold. 
\begin{enumerate}[(1)]
	\item If $A$ is an algebra of finite representation type and $(A, \Lambda, \leq)$ is Ringel self-dual, then there exists a graph automorphism $\sigma$ of the underlying graph of the Gabriel quiver of $A$ and non-negative integers $p_i$, for $i\in \L$, such that  $$T\cong\bigoplus_{i\in \Lambda} \tau^{-p_i}P(i)\cong \bigoplus_{i\in \L} \tau^{p_{\sigma^{-1}(i)}}I(i).$$
	Moreover the number of non-isomorphic indecomposable $A$-modules has the same parity as the cardinality of $|\L|$.
	\item If $T\cong \tau^{-n}(A)\cong\tau^n(\Hom_k(A, k))$, then $(A, \Lambda, \leq)$ is Ringel self-dual and it is an algebra of finite representation type.
\end{enumerate}

\end{thmintroduction}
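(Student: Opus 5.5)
Part (1) rests on the standard facts for a hereditary quasi-hereditary algebra $A$. Standard modules are submodules of projectives, hence projective, so $\mathcal F(\Delta)\subseteq A\proj$. Dually, costandard modules are injective. Thus $T$ is a tilting module with $A\in\mathcal F(\Delta)\subseteq{}^{\perp}T$ and $D(A)\in\mathcal F(\nabla)$.

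Assume $R=\End_A(T)^{op}$ is Morita equivalent to $A$ compatibly with the quasi-hereditary structures. Then $R$ is hereditary, so $T$ is a tilting module whose endomorphism algebra is hereditary. Over a representation-finite hereditary algebra of Dynkin type $\Delta$, such a tilting module is a slice module. Concretely, its summands form a section of the preprojective component $\mathbb Z\Delta$, which is the whole AR quiver. This is the classical result of Happel–Ringel: the tilted algebra is hereditary exactly when $\add T$ is a section, or equivalently when $\Hom(T,\tau T)=0$ and the summands of $T$ are closed under paths.

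A section meets each $\tau$-orbit exactly once. Since the $\tau$-orbits are indexed by the projectives $P(i)$, this gives $T\cong\bigoplus_i\tau^{-p_i}P(i)$ with $p_i\ge 0$. The orbits are equally indexed by the injectives $I(j)$, via $I(j)=\tau^{-m_j}P(\nu(j))$. Hence $T\cong\bigoplus_j\tau^{q_j}I(j)$, where each summand is matched to an injective in the same orbit.

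It remains to show that the matching is governed by a graph automorphism $\sigma$ and that $q_{\sigma(i)}=p_i$. Here I would use the Ringel dual structure.
\begin{itemize}
\item The projective $R$-modules are $\Hom(T,T(i))$.
\item The injective $R$-modules are $\Hom(T,D A)$-type modules. Under the Morita equivalence they correspond respectively to $P(i)$ and $I(i)$, with the order reversed.
\item The quiver of $R$ is the opposite of the section quiver, so the self-duality gives an isomorphism of the section's underlying graph with the Gabriel graph of $A$. This is the automorphism $\sigma$.
\item The length of the $\tau$-orbit segment from the projective to $T$ must match the length from $T$ to the injective in the $R$-module category. The reason is that the tilting functor $\Hom(T,-)$ sends the modules cogenerated by $\tau T$ and those generated by $\tau^{-1}T$ to the corresponding torsion and torsion-free parts in $R\m$.
\end{itemize}
I expect this matching of $p_i$ with $q_{\sigma(i)}$ to be the main obstacle. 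I would first try to count the indecomposables between $P(i)$ and $T(i)$ in $A\m$ and compare them with the indecomposables between $T_R(\sigma i)$ and $I_R(\sigma i)$ transported through the equivalence.

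Parity then follows by counting orbits. The AR quiver partitions into three parts: modules strictly before the section (the preprojective part $\tau^{-a}P(i)$ with $a<p_i$), the $|\Lambda|$ summands of $T$, and modules strictly after. The first part has $\sum_i p_i$ members. The last part has $\sum_i q_i=\sum_i p_{\sigma^{-1}(i)}=\sum_i p_i$ members. So the total is $2\sum p_i+|\Lambda|$, which has the same parity as $|\Lambda|$.

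For part (2), the hypothesis $T\cong\tau^{-n}A$ is indecomposably finite in the sense that $\tau^{-n}$ sends each projective to a nonzero module. Together with $\tau^{n}DA\cong T$, this forces every indecomposable to be reached. Indeed, any non-preprojective indecomposable $M$ would satisfy $\Hom(T,M)\neq0$ or $\Ext^1(T,M)\neq 0$. Via $\Ext^1(\tau^{-n}A,M)\cong D\Hom(M,\tau^{-n+1}A)$ and the fact that there are no maps from regular to preprojective modules, one shows $M$ lies both in $\gen T$ and in $\cogen\tau T$, which is a contradiction. Hence $A$ is representation-finite.

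Next, $\tau^{-n}$ is an equivalence from the full subcategory of modules without projective summands in the first $n$ orbits onto its image. It therefore yields $\End(T)\cong\End(\tau^{-n}A)\cong\End(A)=A$. Finally, the standard modules $\Delta_R$ correspond under $\Hom(T,-)$ to costandard modules. Since $\tau^{-n}$ preserves the order data on $\tau$-orbits, the induced bijection of weights is order-reversing-compatible, which gives Ringel self-duality.
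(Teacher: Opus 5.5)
Your proposal has genuine gaps in both parts.

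\textbf{Part (1).} Your reduction to ``$\add T$ is a complete slice'' is fine; the paper also takes this from Happel--Ringel. But the substance of (1), a graph automorphism $\sigma$ with $p_i=q_{\sigma(i)}$, is exactly the step you leave open.

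Your sketch for it also starts from a wrong dictionary. $\Hom_A(T,DA)\cong DT$ is the characteristic tilting module of $R$, not its injective cogenerator. The correct dictionary is an exact equivalence $\Psi\colon\mathcal F(\St)\to\mathcal F(\Cs)$ with $\Psi(A\proj)=\add T$ and $\Psi(\add T)=A\inj$.

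The missing idea is that $\Psi$ commutes with $\tau^{-1}$. The paper obtains this in three steps.
\begin{enumerate}
\item Both torsion pairs $(\gen\tau^{-1}T,\mathcal F(\St))$ and $(\mathcal F(\Cs),\cogen\tau T)$ split. For the first, transport the split pair $(\mathcal X(T),\mathcal Y(T))$ of $R\m$ through the self-duality; for the second, use $\gldim R\le 1$.
\item Hence $\mathcal F(\St)$ is $\tau$-closed and $\mathcal F(\Cs)$ is $\tau^{-1}$-closed. So the relative Auslander--Reiten sequences of these exact categories are the absolute ones of $A\m$.
\item Then $\Psi P(i)\cong T_{\sigma(i)}$ forces $\Psi T_i\cong\tau^{-p_i}T_{\sigma(i)}$. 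Since this module is injective, $p_i=q_{\sigma(i)}$. Moreover, $\sigma$ is a graph automorphism because $\Psi$ sends irreducible maps between projectives to irreducible maps between summands of $T$.
\end{enumerate}

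Counting alone only yields $\sum_i p_i=\sum_i q_i$. That weaker equality does suffice for the parity claim, which you instead derived from the unproved matching. Indeed, $\mathcal F(\St)=\cogen T$ is exactly the set of modules on or before the slice, dually for $\mathcal F(\Cs)$, and $|\mathcal F(\St)|=|\mathcal F(\Cs)|$.

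Separately, your opening ``standard facts'' are false. $\St(\l)$ is a quotient of $P(\l)$, not a submodule, and need not be projective; in the paper's $A_3$ example, $\St(1)=S(1)$. If all standard modules were projective and all costandard modules injective, $T$ would be projective-injective and $A$ semisimple.

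\textbf{Part (2).} Your finite-type argument is muddled: a preinjective, non-preprojective $M$ need not lie in $\cogen\tau T$; only regular ones do. It is easily repaired, though. $\tau^{-n}P(i)\cong\tau^nI(j)$ would be both preprojective and preinjective, which is impossible in infinite type.

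The real gap is Ringel self-duality itself. An algebra isomorphism $\End_A(T)^{op}\cong A$ is not enough. You must show that the resulting equivalence carries the standard $R$-modules $\Hom_A(T,\Cs(\l))$ to standard $A$-modules, and ``$\tau^{-n}$ preserves the order data on $\tau$-orbits'' does not do this.

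The paper proceeds as follows.
\begin{enumerate}
\item Via the Auslander--Reiten formula, it computes $\mathcal F(\St)=\add\bigoplus_{k=0}^n\tau^{-k}A$ and $\mathcal F(\Cs)=\add\bigoplus_{k=0}^n\tau^kDA$.
\item It proves that $\tau^n\colon\mathcal F(\Cs)\to\mathcal F(\St)$ is an exact equivalence. This uses $\Hom_A(\tau^iX,A)=0$ for $X\in\mathcal F(\Cs)$ and $i<n$, together with $\tau=D\Ext^1_A(-,A)$.
\item The standardisation theorem then gives self-duality.
\end{enumerate}

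Your algebra-isomorphism route could actually be completed with the same two descriptions. Iterating the Auslander--Reiten formula gives $\Hom_A(\tau^{-n}A,X)\cong\tau^nX$ naturally for $X\in\mathcal F(\Cs)$, so under your isomorphism the Ringel dual functor becomes $\tau^n$. Without identifying $\mathcal F(\St)$ and $\mathcal F(\Cs)$, however, the argument remains incomplete.
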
 

The proof of our main result combines tilting theory with Auslander-Reiten theory. Using Brenner-Butler theory, we show that Ringel self-duality forces the two torsion pairs associated with the characteristic tilting module to split, yielding a description of the Auslander-Reiten quiver of the algebra in terms of category of modules filtered by standard modules and the category of modules filtered by costandard modules. Further, in this setting, the characteristic tilting module $T$ is isomorphic to $\tau^{-n}(A)$ precisely when the category of modules filtered by standard modules is the additive closure of the modules $T, \tau (T), \ldots, \tau^n (T)$.

A practical advantage of Theorem \ref{maintheorem} is that it allows us to test Ringel self-duality on a hereditary algebra without explicitly computing the endomorphism algebra of the characteristic tilting module. 
As application of Theorem \ref{maintheorem}, we get the following.

\begin{thmintroduction} (see Theorem \ref{maintheoremBproof}) Let $\Gamma$ be a simply laced Dynkin diagram. \label{maintheoremB}	
	 There exists a quiver $Q$ with underlying graph $\Gamma$ and a partial order $\leq$ on $Q_0$ making the corresponding quasi-hereditary algebra $(kQ, Q_0, \leq)$ Ringel self-dual if and only if $\Gamma$ is of type $D_{2m}$ or $A_{4t+1}$ for integers $m\geq 2$ and $t\geq 0$.
\end{thmintroduction}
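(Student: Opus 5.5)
The plan is to reduce Theorem \ref{maintheoremB} to Theorem \ref{maintheorem}. Necessity will come from part (1) together with the combinatorics of $\tau$-orbits in Dynkin Auslander--Reiten quivers. Sufficiency will come from part (2), applied to explicitly chosen orientations and orders.

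\emph{Necessity, first pass (parity).} If $(kQ,Q_0,\leq)$ is Ringel self-dual, Theorem \ref{maintheorem}(1) says that the number of positive roots of $\Gamma$ has the same parity as $|Q_0|$.
\begin{itemize}
\item For $A_n$ this reads $n(n+1)/2\equiv n \pmod 2$, that is, $n(n-1)/2$ is even, so $n\equiv 0,1 \pmod 4$.
\item For $D_n$ there are $n(n-1)$ positive roots, which is even, so $n$ must be even.
\item The types $E_6,E_7,E_8$ have $36,63,120$ positive roots, which agree in parity with $6,7,8$, so parity does not exclude them.
\end{itemize}
Parity therefore eliminates $A_{4t+2}$, $A_{4t+3}$ and $D_{2m+1}$. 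What remains to exclude is $A_{4t}$ and $E_6,E_7,E_8$.

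\emph{Necessity, second pass (orbit positions).} Let $\nu$ be the Nakayama permutation, so $\tau^{-(m_i-1)}P(i)\cong I(\nu i)$, where $m_i$ is the length of the $\tau$-orbit of $P(i)$. The two descriptions of $T$ in Theorem \ref{maintheorem}(1) show that each $\tau$-orbit contains exactly one summand of $T$. Reading its position from both ends gives $m_i-1=p_i+p_{\sigma^{-1}\nu(i)}$.

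Summing over orbits, and using that $\sum m_i$ is the number of positive roots $|Q_0|h/2$, yields arithmetic constraints. These involve the Coxeter number $h$ and the orbit lengths, which are computable from the orientation by the standard formula $m_i+m_{\nu i}=h$ (up to the orientation correction).

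I then combine these constraints with the splitting of the AR quiver into $\gen(\tau^{-1}T)$, $\add T$ and $\cogen(\tau T)$, which Ringel self-duality forces to be symmetric. The inputs are that:
\begin{itemize}
\item $T$ must be a tilting module, so there are no extensions between summands in different orbits;
\item $\Delta(i)\hookrightarrow T(i)$, and the $\Delta(i)$ must lie in the part of the quiver preceding $T$.
\end{itemize}
The hope is to show that, for $A_{4t}$ (odd $h$, $\nu$ the flip) and for $E_6,E_7,E_8$, no choice of $\sigma$ and orientation places one summand per orbit compatibly with the standard modules. The likely mechanism is that the middle positions in the orbits fail to form a section, or are forced to give $\Ext^1$ between summands. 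This case analysis is the step I expect to be the main obstacle, especially for $E_7$ and $E_8$. There $\sigma=\mathrm{id}$ and $\nu=\mathrm{id}$, so every orbit has odd length $h/2$ and the crude counting is consistent. I would first try to show that $T$ would have to be $\tau^{-(h-2)/4}A$, and then check directly that this module is not a characteristic tilting module for any order. The check compares the dimension vectors of its summands with those of the possible $\Delta(i)$, which are quotients of $P(i)$.

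\emph{Sufficiency.} I exhibit examples satisfying the hypothesis of Theorem \ref{maintheorem}(2).
\begin{itemize}
\item For $D_{2m}$ take the bipartite orientation. Here $h=4m-2$ and $\nu=\mathrm{id}$, so every orbit has length $2m-1$, and $\tau^{-2(m-1)}A\cong DA$. Thus $T_0:=\tau^{-(m-1)}A\cong\tau^{m-1}DA$.
\item For $A_{4t+1}$ take a bipartite (non-linear) orientation whose two end vertices are both sinks. All orbits then have length $2t+1$, and $T_0:=\tau^{-t}A\cong\tau^tDA$.
\end{itemize}
In each case it remains to choose a partial order for which $T_0$ is the characteristic tilting module. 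I would take the order in which vertices whose summand $\tau^{-n}P(i)$ has $P(i)$ as submodule with the appropriate top come first. I would then verify that $\Delta(i)$ (the largest quotient of $P(i)$ with composition factors $\leq i$) embeds in the corresponding summand with $\Delta$-filtered cokernel. Theorem \ref{maintheorem}(2) then yields Ringel self-duality.
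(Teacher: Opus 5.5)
Your parity step is the paper's argument. Beyond it there are two problems, and the first is fatal. \textbf{The sufficiency examples cannot work.} For every quasi-hereditary structure and every minimal $\lambda\in\Lambda$ one has $\Delta(\lambda)=\nabla(\lambda)=S(\lambda)$, hence $T(\lambda)=S(\lambda)$. So a characteristic tilting module always has a simple summand. In a bipartite orientation every vertex is a sink or a source. Hence every simple module is projective or injective and sits at one of the two ends of its $\tau$-orbit. Your candidate is $T_0=\tau^{-n}A$, with $n=m-1$ for $D_{2m}$ and $n=t$ for $A_{4t+1}$. It lies strictly inside every orbit (of length $2n+1\geq 3$) as soon as $m\geq 2$, resp.\ $t\geq 1$. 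So $T_0$ has no simple summand and is the characteristic tilting module for no partial order. For example, for bipartite $D_4$ the summands of $\tau^{-1}A$ have dimension vectors $(0,1,1,1)$, $(1,0,1,1)$, $(1,1,1,0)$, $(1,1,2,1)$. In fact Propositions \ref{Proptauorbitsprediction}, \ref{tiltingcaseD4} and \ref{proptwistedcaseD} force $T=T_0$ for these orientations (the twist in type $D$ needs vertex $3$ to be neither sink nor source). So bipartite orientations are never Ringel self-dual. You need an orientation in which some simple lies exactly in the middle of its orbit, and an order making that vertex minimal. The paper takes $1\to\cdots\to 2t+1\leftarrow\cdots\leftarrow 4t+1$, where $S(t+1)$ and $S(3t+1)$ are summands of $\tau^{-t}A$. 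For $D_{2m}$ it takes the linear quiver $2m\to\cdots\to 3\to 1$, $3\to 2$, where $S(m+1)$ is a summand of $\tau^{-(m-1)}A$. It then proves $T\cong\tau^{-n}A$ by Coxeter-matrix computations and the monotonicity of the $\tau$-position along nonzero $\Hom$ and $\Ext^1$ (Lemma \ref{taumeasure}), not by the recipe you sketch.

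\textbf{The exclusion of $A_{4t}$ and of type $E$ is only a hope.} For $A_{4t}$ and $E_6$ your own relation $m_i-1=p_i+p_{\pi(i)}$, with $\pi=\sigma^{-1}\nu$, already finishes the job, though you did not carry it out. Here $\pi$ is an involution. If $\pi=\mathrm{id}$, every $m_i$ is odd, contradicting $m_i+m_{\nu(i)}=h=4t+1$. If $\pi=\nu$, then $m_i=m_{\nu(i)}$, so $2m_i=h$ would be odd. In $E_6$, with the labelling of Corollary \ref{cor4dot5}, both candidates for $\pi$ fix vertex $6$. So $m_6$ is odd, whereas $2m_6=12$. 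For $E_7$ and $E_8$ you correctly obtain $T=\tau^{-4}A$, resp.\ $\tau^{-7}A$. But ``comparing dimension vectors with the possible $\Delta(i)$'' is not an argument that covers all orientations and all orders. The missing idea is again $T(\lambda)=S(\lambda)$ for minimal $\lambda$. It must be combined with an orientation-independent proof that no simple module lies at distance exactly $4$, resp.\ $7$, from the projective end of its $\tau$-orbit. The paper obtains this in two steps. First, it computes $\tau S(j)=I_{\mathcal B}(b_1)$ when $j\to b_1$ is the only arrow leaving $j$ and points into an arm (Lemma \ref{reductionfortypeE}). Second, it uses the symmetry $m_{Q^{op}}(j)=(l-1)-m_Q(j)$ (Lemma \ref{reductiontoOp}) to handle the remaining cases.
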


So Ringel self-duality never occurs in type $E$ while in type $D$, a parity phenomenon occurs: Ringel self-duality is possible for Dynkin diagrams of even type $D_{2m}$, but impossible for those of odd type. 

Many Ringel self-dual algebras known in the literature (for instance blocks of the BGG category $\mathcal{O}$) admit a simple preserving duality and have positive dominant dimension. By contrast, Theorem \ref{maintheoremB} provides explicit families of Ringel self-dual algebras that admit no simple preserving duality and have dominant dimension zero (since the linear orientation is not allowed in type $A$). To the best of our knowledge, Theorem \ref{maintheoremB} provides the first examples of Ringel self-dual algebras without a simple preserving duality to be explicitly identified in the literature.

Another consequence of the combination of Theorems \ref{maintheorem} and \ref{maintheoremB} is that Ringel self-duality forces all $\tau$-orbits of a representation finite hereditary algebra to have the same cardinality. As a consequence, outside type $D_{2m}$ with $m>2$,  Theorem \ref{maintheorem}  admits the following sharper formulation.

\begin{corintroduction}(see Corollary \ref{corCinternal}) \label{corC}
	Let $(A, \Lambda, \leq)$ be a (connected and basic) quasi-hereditary $k$-algebra with $\gldim A\leq 1$ whose Gabriel quiver is not of type $D_{2m}$ for $m>2$ and $k$ is an algebraically closed field. Then
	$(A, \Lambda, \leq)$ is Ringel self-dual and it is an algebra of finite representation type if and only if the (basic) characteristic tilting module $T$ is isomorphic to $\tau^{-n}(A)\cong\tau^n(\Hom_k(A, k))$ for some $n\in \mathbb{N}\cup \{0\}$.
\end{corintroduction}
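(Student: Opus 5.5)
The corollary follows by combining Theorem \ref{maintheorem}(2) with Theorem \ref{maintheoremB}, once we show that in the Ringel self-dual, representation-finite case all exponents $p_i$ coincide.

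The direction ``$\Leftarrow$'' is exactly Theorem \ref{maintheorem}(2). If $T\cong\tau^{-n}(A)\cong\tau^n(\Hom_k(A,k))$, then $(A,\Lambda,\leq)$ is Ringel self-dual and $A$ is of finite representation type. No restriction on the Gabriel quiver is needed here.

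For ``$\Rightarrow$'', assume $A$ is representation-finite and Ringel self-dual. By Gabriel's theorem the underlying graph $\Gamma$ is simply laced Dynkin. By Theorem \ref{maintheoremB} and the hypothesis, $\Gamma$ is $A_{4t+1}$ or $D_4$. Theorem \ref{maintheorem}(1) gives a graph automorphism $\sigma$ of $\Gamma$ and integers $p_i\geq 0$ with
\[
T\cong\bigoplus_i\tau^{-p_i}P(i)\cong\bigoplus_i\tau^{p_{\sigma^{-1}(i)}}I(i).
\]
So it suffices to show that all $p_i$ are equal, say $p_i=n$. Then $T\cong\tau^{-n}(A)$, and since $\sigma$ is a bijection, also $T\cong\tau^n(\Hom_k(A,k))$.

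To show the $p_i$ are equal, I would use the $\tau$-orbit structure of the preprojective component $\mathbb{Z}Q$. The orbit of $P(i)$ contains exactly one summand of $T$, namely $\tau^{-p_i}P(i)$, and this orbit ends at $I(\nu i)$, where $\nu$ is the Nakayama permutation of $\Gamma$. Write $m_i+1$ for the length of the orbit, so $\tau^{-m_i}P(i)\cong I(\nu i)$. The second description of $T$ places the summand of this orbit at $\tau^{p_{\sigma^{-1}(\nu i)}}I(\nu i)$. Comparing the two positions gives
\[
p_i+p_{\sigma^{-1}\nu(i)}=m_i\quad\text{for all } i.
\]

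The hard part, as the introduction hints, is that Ringel self-duality forces all $\tau$-orbits to have the same length, so that $m_i=m$ for all $i$. I would obtain this from Theorem \ref{maintheoremB} and the known orientations. Alternatively I would use the splitting of the two torsion pairs: the modules cogenerated by $\tau T$ are exactly the $\tau^jT$ with $j\geq 1$, and each such orbit segment has length $p_i$. Counting gives $|\mathrm{ind}\,A|=\sum_i(m_i+1)$.

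I would then show $p$ is constant on each orbit of $\tau':=\sigma^{-1}\nu$. For this I would use that the summands of $T$ form a slice-like configuration, since $\Ext^1(T,T)=0$. Adjacent vertices $i,j$ then satisfy $|p_i-p_j|\leq 1$, with the sign constrained by the arrow orientations in $\mathbb{Z}Q$. Applying this together with the symmetry $\sigma$ should force $p$ to be constant in types $A_{4t+1}$ and $D_4$.

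For $D_4$, one can alternatively check the finitely many orientations and orders directly. The analogous argument would fail for $D_{2m}$ with $m>2$, because there the characteristic tilting module need not be a single $\tau$-shift of $A$. This is why those types are excluded, and it is the delicate step.
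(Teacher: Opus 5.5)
Your treatment of ``$\Leftarrow$'' (Theorem \ref{maintheoremoneimplication}) matches the paper, and so does your reduction of ``$\Rightarrow$'' to $A_{4t+1}$ and $D_4$ via Theorem \ref{maintheoremB}. The remaining step, that all $p_i$ coincide, is exactly the content of Propositions \ref{Proptauorbitsprediction} and \ref{tiltingcaseD4}, and your sketch does not establish it.

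\textbf{Orbit lengths.} Equal orbit lengths cannot come ``from Theorem \ref{maintheoremB} and the known orientations''. That theorem only identifies the underlying graph, and the admissible orientations are explicitly left open in the paper. The correct sources are Lemma \ref{QuiveraddT}(b)(iv) for $A_n$, where every automorphism is an involution, and Lemma \ref{sizetauorbitsADE} for $D_4$.

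\textbf{Type $A_{4t+1}$.} The slice conditions ($p_j-p_i\in\{0,1\}$ for an arrow $i\to j$) and the relation $p_i+p_{\sigma^{-1}(i)}=2t$ do not by themselves force $p$ to be constant. What makes them rigid is that the reflection $\rho$ preserves the orientation of $Q$, and you never prove this. The paper derives it from equal orbit lengths. Since $\tau^{-2t}P(i)\cong I(\rho(i))$, applying $\tau^{-2t}$ to the irreducible map $P(j)\to P(i)$ coming from an arrow $i\to j$ gives an irreducible map $I(\rho(j))\to I(\rho(i))$, i.e.\ an arrow $\rho(i)\to\rho(j)$. Then Lemma \ref{QuiveraddT}(b)(ii) applies to every arrow whichever $\sigma$ occurs. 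Hence $p$ is constant, and $2p+1=2t+1$.

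\textbf{Type $D_4$.} Here the combinatorial constraints genuinely do not force constancy, so your claim that slice conditions plus the symmetry ``should force $p$ to be constant'' is false. Take the orientation $1\to 3\to 2$, $4\to 3$ and $\sigma=(1\,2)$. The vector $(p_1,p_2,p_3,p_4)=(0,2,1,1)$ satisfies all of Lemma \ref{QuiveraddT}(b)(i)--(iv), and it yields the complete slice $P(1)\oplus\tau^{-1}P(3)\oplus\tau^{-1}P(4)\oplus I(2)$. The paper excludes this configuration with a non-combinatorial input: a characteristic tilting module always has a simple summand, namely $T(\lambda)=\Delta(\lambda)=\nabla(\lambda)=S(\lambda)$ for $\lambda$ minimal, and none of these four modules is simple. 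Your fallback of ``checking the finitely many orientations and orders directly'' is neither carried out nor pointed at this obstruction. So the $D_4$ case, and with it the forward direction, remains unproved in your proposal.
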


In Proposition \ref{proptwistedcaseD}, we establish a one-sided analogue of Corollary  \ref{corC} for type $D_{2m}$, $m>2$, where the corresponding description of the characteristic tilting module is  modified by a twist arising from the non-trivial graph automorphism of the Dynkin diagram.

The article is organised as follows: In Section \ref{Preliminaries} we introduce the notation, terminology and standard facts to be used throughout the paper on Auslander-Reiten theory (see Subsection \ref{Auslander-Reiten quiver of a category}), on hereditary algebras (see Subsection \ref{Hereditary algebras over algebraically closed fields}), on quasi-hereditary algebras (see Subsection \ref{Quasi-hereditary structures}), and on torsion pairs (see Subsection \ref{Torsion pairs}). In Subsection \ref{Two split torsion pairs associated with the characteristic tilting module}, we establish that the two torsion pairs asssociated to the characteristic tilting module are split when the algebra is Ringel self-dual. In particular, that every indecomposable over a Ringel self-dual algebra has a filtration by standard modules or a filtration by costandard modules. In Subsection \ref{Proof of the main result}, we give the proof of Theorem \ref{maintheorem}. In Section \ref{Applications and classification}, we apply the methods developed in Section \ref{Properties} to the path algebras of type $A$, $D$ and $E$. Namely, in Section \ref{Applications and classification}, we get Theorem \ref{maintheoremB} and Corollary \ref{corC} as application of Theorem \ref{maintheorem}. In particular, in Subsection \ref{Examples of hereditary algebras that are not Ringel self-dual}, we obtain that no non-semisimple connected Nakayama hereditary algebra is Ringel self-dual.

\section{Preliminaries} \label{Preliminaries}

We will assume throughout this paper that $k$ is an algebraically closed field and $A$ and $B$ are basic finite-dimensional $k$-algebras.
By $A\m$  we mean the category of finitely generated  left  $A$-modules and by $A\proj$ the full subcategory of $A\m$ whose modules are the finitely generated projective $A$-modules. Given $M\in A\m$, we denote by $\operatorname{soc} M$ the socle of $M$ whereas we denote by $\top M$ the top of $M$. We denote by $\add_A M$ (or just $\add M$ when $A$ is fixed) the full subcategory of $A\m$ whose modules are direct summands of finite direct sums of $M\in A\m$.  We write $A\proj$ to denote $\add A$. Given $M\in A\m$ we write $|M|$ to denote the number of non-isomorphic indecomposable $A$-modules of $M$. For a set $X$, we write $|X|$ to denote the cardinality of $X$. The opposite algebra of $A$ will be denoted by $A^{op}$.  By $D$ we denote the standard duality $\Hom_{k}(-, k)\colon A\m\longleftrightarrow A^{op}\m$.
Given a full subcategory $\mathcal{X}$ of $A\m$ we denote by $D\mathcal{X}$ the full subcategory of $A^{op}\m$ whose objects are of the form $DX$ for some $X\in \mathcal{X}$. Given a full subcategory $\mathcal{X}$ of $A\m$, we write $\mathcal{X}^\perp$ to denote the category $\{M\in A\m\colon \Ext_A^i(X, M)=0, \ \forall X\in \mathcal{X}, \ \forall i>0\}$ and $\mathcal{X}^{\perp_1}$ to denote the category $\{M\in A\m\colon \Ext_A^1(X, M)=0, \ \forall X\in \mathcal{X}\}$. Similarly, one defines ${}^\perp\mathcal{X}$ and ${}^{\perp_1}\mathcal{X}$.
The global dimension of $A$ is denoted by $\gldim A$.
For a quiver $Q$, we denote by $Q_0$ its set of vertices and $Q_1$ the set of arrows. 
Throughout this paper, we use the convention that paths in a quiver are composed as morphisms, that is, from right to left.

\subsection{Auslander-Reiten quiver of a category}\label{Auslander-Reiten quiver of a category}

Let $\mathcal{A}$ be an additive full subcategory of $A\m$ that is closed under direct sums, summands and extensions. We can regard $\mathcal{A}$ as an exact category by considering the exact sequences inherited from $A\m$ as the conflations.

A morphism $f\colon X\rightarrow Y$ in $\mathcal{A}$ is a \emph{split monomorphism} (resp. \emph{split epimorphism}) if there exists $r\in \Hom_{\mathcal{A}}(Y, X)$ such that $r\circ f=\id_{X}$ (resp. $f\circ r=\id_Y$). The morphism $f\in \Hom_{\mathcal{A}}(X, Y)$ is called \emph{irreducible} if $f$ is neither a split monomorphism nor a split epimorphism, and if $f=h\circ g$ for some $h\in \Hom_{\mathcal{A}}(Z, Y)$ and $g\in \Hom_{\mathcal{A}}(X, Z)$, then $g$ is a split monomorphism or $h$ is a split epimorphism. Given $X, Y\in \mathcal{A}$, the irreducible morphisms from $X$ to $Y$ give rise to a $k$-vector space, denoted as $\operatorname{Irr}_\mathcal{A}(X, Y)$.
(see for instance  \citep[A.3.]{zbMATH02228448}.)

\begin{Lemma}\label{irreduciblefullsubcategories}
	Let $\mathcal{B}$ be an additive full subcategory of $\mathcal{A}$ closed under direct sums and summands. Let $X, Y\in \mathcal{B}$ and $f\in \Hom_{\mathcal{A}}(X, Y)$. Then $f$ is a split monomorphism (resp. split epimorphism) in $\mathcal{A}$ if and only if it is in $\mathcal{B}$. Further, if $f$ is irreducible in $\mathcal{A}$, then it is irreducible in $\mathcal{B}$.
\end{Lemma}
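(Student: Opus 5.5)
The plan is to argue directly from the definitions, using only that $\mathcal{B}$ is a \emph{full} subcategory of $\mathcal{A}$, so that $\Hom_{\mathcal{B}}(U,V)=\Hom_{\mathcal{A}}(U,V)$ for all $U,V\in\mathcal{B}$.

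\textbf{Split morphisms.} Suppose $f\colon X\to Y$ is a split monomorphism in $\mathcal{A}$. Then there is $r\in\Hom_{\mathcal{A}}(Y,X)$ with $r\circ f=\id_X$. Since $X,Y\in\mathcal{B}$ and $\mathcal{B}$ is full, $r$ is already a morphism in $\mathcal{B}$, so $f$ is split mono in $\mathcal{B}$. Conversely, a retraction in $\mathcal{B}$ is in particular a morphism of $\mathcal{A}$, so split mono in $\mathcal{B}$ implies split mono in $\mathcal{A}$. The split epimorphism case is the same argument with a section $r$ satisfying $f\circ r=\id_Y$. This gives the first assertion.

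\textbf{Irreducibility.} Let $f$ be irreducible in $\mathcal{A}$.
\begin{enumerate}[(i)]
\item By the first part, $f$ is neither split mono nor split epi in $\mathcal{B}$.
\item Suppose $f=h\circ g$ with $g\in\Hom_{\mathcal{B}}(X,Z)$ and $h\in\Hom_{\mathcal{B}}(Z,Y)$, where $Z\in\mathcal{B}$.
\item Since $Z\in\mathcal{B}\subseteq\mathcal{A}$, this is also a factorisation in $\mathcal{A}$.
\item Irreducibility of $f$ in $\mathcal{A}$ then gives that $g$ is split mono or $h$ is split epi in $\mathcal{A}$.
\item Applying the first part to $g$ (with $X,Z\in\mathcal{B}$) or to $h$ (with $Z,Y\in\mathcal{B}$) transfers this to $\mathcal{B}$.
\end{enumerate}
Hence $f$ is irreducible in $\mathcal{B}$.

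\textbf{Remarks.} There is no real obstacle; the only point needing care is that fullness is what makes the retractions and sections lie in $\mathcal{B}$. The hypotheses of closure under direct sums and summands are not used in this argument; they serve only to make $\mathcal{B}$ a reasonable Krull–Schmidt setting for the later discussion of irreducible maps. The converse of the last assertion fails in general, since $\mathcal{A}$ offers more intermediate objects $Z$ through which $f$ might factor, so it is not claimed.
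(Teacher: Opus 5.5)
Your proposal is correct and follows the paper's argument: fullness makes retractions and sections in $\mathcal{A}$ between objects of $\mathcal{B}$ into morphisms of $\mathcal{B}$, and any factorisation in $\mathcal{B}$ is also a factorisation in $\mathcal{A}$. You simply write out in detail what the paper dismisses as clear.
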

\begin{proof}
	Since $\mathcal{B}$ is full in $\mathcal{A}$, the first statement is clear. Moreover, any factorisation in $\mathcal{B}$ is a factorisation in $\mathcal{A}$, thus the second statement is also clear.
\end{proof}

The \emph{Auslander-Reiten quiver} of $\mathcal{A}$, written as $\Gamma(\mathcal{A})$ is the quiver defined as follows: the vertices of $\Gamma(\mathcal{A})$ are in one to one correspondence with the indecomposable objects of $\mathcal{A}$ and are denoted by $[M]$, while the arrows $[M]\rightarrow [N]$ are in one to one correspondence with the vectors of a basis of $\operatorname{Irr}_\mathcal{A}(X, Y)$.

A morphism $g\colon Y\rightarrow Z$ in $\mathcal{A}$ is \emph{right almost split} if it is not a split epimorphism and for any morphism $h\colon C\rightarrow Z$ which is not a split epimorphism there exists $r_h\colon C\rightarrow Y$ such that $h=g\circ r_h$. Dually, a morphism $f\colon X\rightarrow Y$ in $\mathcal{A}$ is \emph{left almost split} if it is not a split monomorphism and any morphism $X\rightarrow C$ which is not a split monomorphism factors through $f$. 
An exact sequence in $\mathcal{A}$, $0\rightarrow X\xrightarrow{f} Y\xrightarrow{g} Z\rightarrow 0$ is called an \emph{Auslander-Reiten sequence} in $\mathcal{A}$ if $f$ is left almost split and $g$ is right almost split (see for instance  \citep[Proposition 2.3]{zbMATH01224795}).
We say that $\mathcal{A}$ \emph{has Auslander-Reiten sequences} if the following three conditions are satisfied: first, every indecomposable object $X\in \mathcal{A}$ there exists  a right almost split morphism ending at $X$ and a left almost split morphism starting at $X$; secondly for all non-projective indecomposable objects $Z$ in $\mathcal{A}$  there exists an Auslander-Reiten sequence $0\rightarrow X\rightarrow Y\rightarrow Z\rightarrow 0$ in $\mathcal{A}$, and third, if for all indecomposable non-injective objects $X$ there exists an Auslander-Reiten sequence $0\rightarrow X\rightarrow Y\rightarrow Z\rightarrow 0$ in $\mathcal{A}$.

When $\mathcal{A}$ has Auslander-Reiten sequences, every irreducible morphism occurs as a component of a left or right almost split morphism, and, whenever the corresponding Auslander-Reiten sequence exists, as components of its maps (see for instance \citep[V, Theorem 5.3]{zbMATH00707210} for the classical case and \citep[Theorem 1.4]{zbMATH06059246} for the general case). \enlargethispage{\baselineskip}

For a finite-dimensional algebra $A$, we write $\tau=D\Tr$ and $\tau^{-1}=\Tr D$ for the Auslander-Reiten translations in $A\m$. For instance, if $Z$ is indecomposable non-projective $A$-module, the Auslander-Reiten sequence ending at $Z$ has the form $0\rightarrow \tau Z\rightarrow Y\rightarrow Z\rightarrow 0$.
For general background on Auslander-Reiten theory of finite-dimensional algebras we refer to \cite{zbMATH02228448, zbMATH00707210}.

\subsection{Hereditary algebras over algebraically closed fields} \label{Hereditary algebras over algebraically closed fields}
A finite-dimensional algebra $A$ is called \emph{hereditary} if every submodule of a projective $A$-module is projective, equivalently, if $\gldim A\leq 1$.
Hereditary algebras are among the best understood algebras and one particular feature that they possess is that their representation theory is particularly accessible from the viewpoint of Auslander-Reiten theory.
Every basic connected finite-dimensional hereditary algebra (over an algebraically closed field $k$) is isomorphic to the path algebra $kQ$ of a finite acyclic quiver $Q$. Over an arbitrary field, quivers must be replaced by species, but we will not pursue such direction here. By Gabriel's theorem, $kQ$ is of finite representation type (that is $kQ\m=\add M$ for some $M\in kQ\m$) exactly when the underlying graph of $Q$ is a Dynkin diagram of type 
$A_n, D_n, E_6, E_7$ or $E_8$, for $n\geq 1$ (see for instance \citep[VII, Theorem 5.10]{zbMATH02228448}). In particular, the number of isomorphism classes of indecomposable $kQ$-modules are \begin{align}
	\frac{n(n+1)}{2}, \quad n(n-1), \quad 36, \quad 63, \quad 120,
\end{align} respectively.

Let $Q$ be a finite acyclic quiver. Given $M\in kQ\m$, we write $\dv M\in \mathbb{Z}^n$ for the (column) dimension vector of $M$ with respect to the complete set of primitive idempotents $\{e_1, \ldots, e_n\}$ where $e_j$ denotes the primitive idempotent corresponding to $j\in Q_0$. In particular, the vectors $\dv \top kQe_1, \cdots, \dv \top kQ e_n$ form the standard basis of $\mathbb{Z}^n$. 
 If $kQ$ is of finite representation type and $M, N\in kQ\m$ are indecomposable modules, then $\dv M=\dv N$ if and only if $M\cong N$ (see for instance \citep[VII, Theorem 5.10]{zbMATH02228448} or \citep[IX. Lemma 1.1 and Proposition 3.1]{zbMATH02228448}). If $C$ denotes the Cartan matrix of $kQ$, the \emph{Coxeter matrix} of $kQ$ is the invertible matrix $\Phi=-C^TC^{-1}$ so that $\dv \tau M=\Phi \dv M$ for every $M\in kQ\m$ with $\Hom_{kQ}(M, kQ)=0.$ In particular, $\Phi^{-1} \dv M= \dv \tau^{-1} M$ for every $M\in kQ\m$ with $\Hom_{kQ}(D(kQ), M)=0$ (see for instance \citep[IV, Corollary 2.9]{zbMATH02228448}).

Furthermore, when $A=kQ$ is of finite representation type, the indecomposable $A$-modules decompose into finitely many $\tau$-orbits, each of which containing precisely one indecomposable projective and one indecomposable injective module (see for instance \citep[VIII, Example 1.3(a) and VII, Proposition 5.13]{zbMATH02228448}).
Namely, for $i\in Q_0$, the $\tau$-orbit of $Ae_i$ is $\{\tau^{-n}Ae_i\colon n\in \mathbb{N}\cup \{0\}, \ \tau^{-n}Ae_i\neq 0\}$ and its length is the cardinality of this set.
The lengths of these $\tau$-orbits are closely related to the Coxeter number of the underlying Dynkin diagram. More precisely, the orbits can be paired, possibly with themselves, so that the sum of the lengths of each pair is the Coxeter number.

		\begin{Lemma}\label{sizetauorbitsADE}
			Let $Q$ be a Dynkin quiver of type $A_n$, $D_n$, $E_6, E_7$ or $E_8$ and let $A=kQ$. For $i\in Q_0$ let $P(i)$ (resp. $I(i)$) be the projective (resp. injective) indecomposable associated with the vertex $i$ and set $l_i:=|\{n\in \mathbb{N}\cup \{0\}\colon \tau^{-n}P(i)\neq 0 \}|.$
			Then, there is an automorphism $\sigma$ of the underlying Dynkin diagram of $Q$ such that $l_i+l_{\sigma(i)}=h_Q$ and $\tau^{-l_i+1}P(i)\cong I(\sigma(i))$, 
			where $h_Q$ is the Coxeter number of $Q$, namely $n+1$, $2n-2$, $12$, $18$ and $30$ in types $A_n$, $D_n$, $E_6$, $E_7$ and $E_8$, respectively. In particular, $\sigma(i)=n-i+1$ in type $A_n$, while $\sigma$ is the identity in type $D_{2n}$.
		\end{Lemma}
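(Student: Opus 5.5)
We take the standard route: describe the Auslander--Reiten quiver of $A=kQ$ through the mesh category $\mathbb{Z}Q$, and read off $\sigma$ from the Nakayama permutation of $\mathbb{Z}\Delta$, where $\Delta$ is the underlying Dynkin diagram. Since $A$ is representation-finite and hereditary, $\Gamma(A\m)$ is a connected full, convex, finite subquiver of $\mathbb{Z}Q^{op}$ (the standard identification, see \citep[VIII]{zbMATH02228448}). The projectives $P(i)$ form a section isomorphic to $Q^{op}$. The injectives $I(i)$ form another section, and $I(i)=\nu P(i)$, where $\nu=D\Hom_A(-,A)$ is the Nakayama functor. Every indecomposable module has the form $\tau^{-m}P(i)$ for a unique pair $(i,m)$ with $0\le m\le l_i-1$. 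Each $\tau$-orbit contains exactly one injective, so $\tau^{-l_i+1}P(i)$ is injective, say $\tau^{-l_i+1}P(i)\cong I(\sigma(i))$. This already defines a bijection $\sigma\colon Q_0\to Q_0$.

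Next we identify $\sigma$ with a diagram automorphism via the derived category. We have $D^b(A\m)\simeq$ the mesh category of $\mathbb{Z}\Delta$ (Happel), and on indecomposables the Serre functor $\mathbb{S}=\nu$ satisfies $\mathbb{S}=\tau[1]$. Moreover $\mathbb{S}P(i)=I(i)$, and $P(i)[1]$ corresponds to $\tau^{-1}I(i)$ in the repetitive picture. Classically, $[1]$ acts on the vertices of $\mathbb{Z}\Delta$ as $\tau^{-h/2}$ composed with a diagram automorphism, or more robustly $[2]=\tau^{-h}$ (Gabriel/Miyachi--Yekutieli). The precise statement is that $[1]$ equals $\tau^{-?}\rho$, where $\rho$ is the Nakayama permutation of $\mathbb{Z}\Delta$: the map $(m,x)\mapsto(m+m_x, \rho(x))$, with $\rho$ the canonical involution. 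This involution is $i\mapsto n+1-i$ in type $A_n$, the swap of the two short arms in $D_{2n+1}$, the nontrivial involution in $E_6$, and the identity otherwise.

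The key computation is to follow $P(i)$ along its $\tau^{-1}$-orbit inside $\mathbb{Z}\Delta$ until it reaches the injective section. The identity $\nu P(j)=I(j)$, together with the known formula for the Nakayama permutation $\nu_{\mathbb{Z}\Delta}$ (Gabriel's description: $\nu$ sends the vertex $(0,x)$ to $(m_x,\rho(x))$, with $m_x+m_{\rho(x)}=h-2$), gives the following. The vertex of $P(i)$ sits at $(0,i)$ in coordinates adapted to the orientation, and $I(\sigma(i))$ lies in the same $\tau$-orbit, i.e.\ in the orbit of $i$ in $\mathbb{Z}\Delta$. On the other hand, $I(\sigma(i))=\nu P(\sigma(i))$ lies in the $\tau$-orbit of $\rho(\sigma(i))$. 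Hence $\rho(\sigma(i))=i$, so $\sigma=\rho^{-1}=\rho$, which is a graph automorphism.

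For the length formula, the two positions give $l_i-1 = m_{\sigma(i)}+(\text{orientation shift})$, the shift coming from the offsets of $P(i)$ and $P(\sigma(i))$ in the section. Adding the analogous identity for $\sigma(i)$ makes the shifts cancel, because $\sigma$ is an involution. Using $m_x+m_{\rho x}=h-2$ we obtain $l_i+l_{\sigma(i)}=h_Q$. The type-specific claims then follow from the list of $\rho$: $\rho(i)=n-i+1$ in type $A_n$, and $\rho=\id$ in type $D_{2n}$, since there $-1$ lies in the Weyl group.

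The main obstacle is bookkeeping the section offsets so that the orientation-dependent shifts genuinely cancel. To handle this, I would first verify the length formula for one fixed orientation, for instance by counting: $\sum_i l_i$ equals the number of positive roots, which is $|Q_0|h/2$. I would then show that a BGP reflection at a sink $i$ changes $l_i$ by $+1$ and $l_{\sigma(i)}$ by $-1$. For this, apply the APR tilt $\tau^{-1}P(i)\oplus\bigoplus_{j\ne i}P(j)$ and track how the orbit of $P(i)$ loses its first element while gaining one at the injective end. This shows that $l_i+l_{\sigma(i)}$ is independent of the orientation.
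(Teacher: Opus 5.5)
Your main argument is correct. It differs from the paper mainly in that the paper gives no argument: it proves the lemma purely by citation, namely Propositions 4.4.1--4.4.2 of a textbook and Gabriel's description of the Nakayama permutation of $\mathbb{Z}\Delta$. Your proposal is essentially an unpacking of that second reference.

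The core of it is clean. Place $P(x)$ at $(0,x)$ and write $\nu(m,x)=(m+m_x,\pi(x))$, which is possible because $\nu$ commutes with $\tau$. Comparing $I(\sigma(i))=\tau^{-(l_i-1)}P(i)=(l_i-1,i)$ with $I(\sigma(i))=\nu P(\sigma(i))=(m_{\sigma(i)},\pi(\sigma(i)))$ gives $\pi\sigma=\mathrm{id}$ and $l_i-1=m_{\sigma(i)}$. Then $\nu^2=\tau^2[2]=\tau^{2-h}$ gives both $\pi^2=\mathrm{id}$ and $m_x+m_{\pi(x)}=h-2$, hence $l_i+l_{\sigma(i)}=h_Q$.

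This also shows that $\sigma$ is a diagram automorphism without Gabriel's table, because $\nu$ is an automorphism of the translation quiver $\mathbb{Z}\Delta$. The explicit list of involutions is needed only for the final clause ($\sigma(i)=n-i+1$ in type $A_n$, $\sigma=\mathrm{id}$ in type $D_{2n}$). Your route makes visible where $h_Q$ enters, through the relation $[2]=\tau^{-h}$. The paper's citation buys brevity, and the inputs you invoke (Happel's equivalence, $[2]=\tau^{-h}$, Gabriel's table) are of the same depth as the lemma anyway.

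You should drop your last paragraph. The obstacle it addresses does not exist, since your own computation already makes the offsets cancel (equivalently, $m_x+m_{\rho(x)}$ does not depend on the choice of coordinates because $\rho^2=\mathrm{id}$). As written it is also wrong in two places:
\begin{itemize}
\item The count $\sum_i l_i=|Q_0|h/2$ is only a consistency check. It cannot establish the per-vertex identity $l_i+l_{\sigma(i)}=h$ for a fixed orientation.
\item The signs of the APR-tilt step are reversed. Tilting at a sink $i$ removes $P(i)$ from the orbit of $i$, so $l_i$ drops by one. The new simple injective $P(i)[1]=\tau^{-1}I(i)$ is appended to the orbit of $\sigma^{-1}(i)$, not to the orbit of $P(i)$. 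Invariance of $l_i+l_{\sigma(i)}$ under reflection therefore already presupposes that $\sigma$ is an involution independent of the orientation.
\end{itemize}
Finally, in your second paragraph you use $\rho$ both for the Nakayama permutation and for the diagram involution. The formula ``$[1]=\tau^{-?}\rho$'' should simply read $[1]=\tau^{-1}\nu$.
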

		\begin{proof}
			We refer to \citep[Proposition 4.4.1 and Proposition 4.4.2]{zbMATH08229732}. For the particular case see \citep[Theorem 4.5.1]{zbMATH08229732} or \cite[6.5]{zbMATH03695413}.
		\end{proof}

For our purposes, it is convenient to measure the position of a given indecomposable module in its $\tau$-orbit by its distance to the projective end. When $A$ is a hereditary algebra of finite representation type the value $m(X):=\min \{r\geq 0\colon \tau^r X\in A\proj\}$ is well defined for every indecomposable module $X\in A\m$. The following describes how this value behaves with respect to morphisms and extensions. 
	
	\begin{Lemma}\label{taumeasure}
		Let $A$ be a hereditary algebra of finite representation type. The following assertions hold.
		\begin{enumerate}[(a)]
			\item If $\Hom_A(X, Y)\neq 0$, then $m(X)\leq m(Y)$ for every indecomposable $A$-modules $X$ and $Y$.
			\item If $\Ext_A^1(X, Y)\neq 0$, then $m(Y)<m(X)$ for every indecomposable $A$-modules $X$ and $Y$.
		\end{enumerate} 
	\end{Lemma}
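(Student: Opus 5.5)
The plan is to use the standard structure of the Auslander-Reiten quiver of a representation-finite hereditary algebra $A=kQ$. It is a finite translation quiver, isomorphic to a full convex subquiver of $\mathbb{Z}Q^{op}$, with every indecomposable of the form $\tau^{-r}P(i)$. On it I will use a "level" or slice function compatible with arrows, and compare it with $m$. Concretely, I would exploit that Hom between indecomposables is nonzero only along paths in the Auslander-Reiten quiver. Since $A$ is representation-finite, the Auslander-Reiten quiver is directed and the radical is nilpotent. So any nonzero morphism $X\to Y$ between indecomposables is a sum of composites of irreducible maps, and there is a path $X=X_0\to X_1\to\cdots\to X_s=Y$ in $\Gamma(A\m)$ (Auslander-Reiten theory, e.g. \citep[IV]{zbMATH02228448}). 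Part (a) then reduces to showing $m$ is non-decreasing along arrows.

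For that step, take an arrow $X\to Z$ in $\Gamma(A\m)$ and write $X\cong\tau^{m(X)}{}^{-1}$-shifted, i.e. $X\cong\tau^{-m(X)}P(i)$. There are two cases. If $Z$ is projective, then $m(Z)=0$, and the irreducible map $X\to Z$ makes $X$ a summand of $\rad Z$, which is projective since $A$ is hereditary. So $m(X)=0\le m(Z)$. If $Z$ is not projective, the arrow $X\to Z$ yields an arrow $\tau Z\to X$ by the mesh structure. Then I induct on $m(Z)$: since $m(\tau Z)=m(Z)-1$, the induction hypothesis applied to $\tau Z\to X$ gives $m(Z)-1\le m(X)$, which is the wrong direction. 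So instead I will argue with the standard description of preprojective components. In $\mathbb{Z}Q^{op}$, arrows go $(r,i)\to(r,j)$ or $(r,j)\to(r+1,i)$, where the vertex $(r,i)$ corresponds to $\tau^{-r}P(i)$. Since every module of a representation-finite hereditary algebra is preprojective, the Auslander-Reiten quiver is the full subquiver of $\mathbb{Z}Q^{op}$ on the vertices $(r,i)$ with $\tau^{-r}P(i)\neq0$, and $m(\tau^{-r}P(i))=r$. The arrow types show that the first coordinate never decreases along arrows, which gives (a). This identification, via the knitting of the preprojective component from the arrows $P(j)\to P(i)$ coming from $\rad P(i)$, is the key step and the main thing to justify carefully; I would cite \citep[VIII]{zbMATH02228448}.

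For (b), use the Auslander-Reiten formula for hereditary algebras: $\Ext^1_A(X,Y)\cong D\Hom_A(Y,\tau X)$. If $\Ext^1_A(X,Y)\neq0$, then $X$ is not projective, since otherwise the Ext group vanishes. Also $\Hom_A(Y,\tau X)\neq0$, so by (a) we get $m(Y)\le m(\tau X)=m(X)-1<m(X)$. Here $m(\tau X)=m(X)-1$ holds because $X\cong\tau^{-m(X)}P(i)$ with $m(X)\ge1$, and $\tau\tau^{-1}M\cong M$ for $M$ without injective summands.
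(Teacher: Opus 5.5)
Your proof is correct and takes essentially the paper's route: a nonzero map between indecomposables gives a path of irreducible maps, $m$ is non-decreasing along arrows because every arrow of $\Gamma(A\m)$ stays in the same column of the $\mathbb{Z}Q^{op}$-description or moves one column right, and (b) follows from $\Ext^1_A(X,Y)\cong D\Hom_A(Y,\tau X)$, part (a), and $m(\tau X)=m(X)-1$. Once you invoke the column description, the separate projective case and the abandoned induction (along with the garbled ``$\tau^{m(X)}{}^{-1}$-shifted'' phrase) are unnecessary and can simply be deleted.
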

	\begin{proof}
		Since $A$ is representation-directed, $\Hom_A(X, Y)\neq 0$ forces a path of irreducible maps $X\rightarrow \cdots \rightarrow Y$ in $\Gamma(A\m)$. Every arrow in $\Gamma(A\m)$ stays in the column or moves to the next one, so $m$ is non-decreasing along paths. So (a) holds. Assume that $\Ext_A^1(X, Y)\neq 0$. So $X$ is not projective. By the Auslander-Reiten formula, $0\neq \Ext_A^1(X, Y)\cong \Hom_A(Y, \tau X)$. By (a), $m(Y)\leq m(\tau X)=m(X)-1$.
	\end{proof} 

In the hereditary setting the Auslander-Reiten translations can be computed directly without passing through a minimal projective presentation: namely, if  $A$ is a finite-dimensional hereditary $k$-algebra, then 
$\tau=D\Ext_A^1(-, A)$ and $\tau^{-1}=\Ext^1_{A^{op}}(D-, A)$ as functors on $A\m$ (see for instance \citep[VII, Corollary 1.9]{zbMATH02228448}). An easy consequence is that the vanishing of powers of $\tau$ can be detected by checking it only on simple modules.
	
	\begin{Lemma}\label{tauvanishingonsimples}
		Let $A$ be a hereditary algebra. If $\tau^m S=0$ for every simple $A$-module, then $\tau^m M=0$ for every $M\in A\m$.
	\end{Lemma}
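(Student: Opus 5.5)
Since $A$ is hereditary, $\tau = D\Ext_A^1(-,A)$ is a right exact functor: $\Ext^1_A(-,A)$ turns short exact sequences into long exact sequences ending in $\Ext^2_A(-,A)=0$, so it is right exact as a contravariant functor, and $D$ reverses this. Iterating, $\tau^m$ is a composite of functors. My plan is to induct on the composition length of $M$, showing at each stage that $\tau^m M=0$.

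For the induction step, take a short exact sequence $0\to M'\to M\to S\to 0$ with $S$ simple. By induction $\tau^m M'=0$, and by hypothesis $\tau^m S=0$. I want to conclude $\tau^m M=0$, but the exactness properties of a single $\tau$ do not obviously survive iteration. I will therefore use a cleaner description instead.

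For hereditary $A$, the key fact is: $\tau M=0$ if and only if $M$ is projective, and more generally $\tau^m M=0$ if and only if every indecomposable summand $X$ of $M$ has $\tau^j X$ projective for some $j<m$. Since $\tau$ commutes with direct sums, it suffices to prove the claim for indecomposable $M$.

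Let $X$ be indecomposable with $\tau^m X\neq 0$. Then $X,\tau X,\dots,\tau^{m-1}X$ are all nonprojective indecomposables, and $\tau^{m-1}X$ is indecomposable nonprojective with $\tau^m X\ne0$. I need to produce a simple $S$ with $\tau^m S\neq 0$.

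The intended argument uses the exact description $\tau^m X \cong D\Ext^1_A(\tau^{m-1}X, A)$ together with right exactness, in three steps.
\begin{enumerate}
\item[(i)] A nonzero module $N$ has $\tau N\neq 0$ if and only if $N$ is not projective, which for hereditary $A$ happens if and only if $\Ext^1_A(N,A)\neq0$.
\item[(ii)] Choose a composition series of $X$. Right exactness of $\Ext^1_A(-,A)$ applied along the series shows that if $\Ext^1(X,A)\ne0$, then some composition factor $S$ has $\Ext^1(S,A)\ne0$; moreover $\tau X$ is a quotient-image of $\bigoplus$ of the $\tau S$ in an appropriate filtered sense. More precisely, applying the right exact functor $\tau$ to $0\to M'\to M\to M''\to0$ gives the exact sequence $\tau M'\to\tau M\to\tau M''\to0$.
\item[(iii)] Hence $\tau M$ has a filtration whose subquotients are quotients of $\tau M'$ and $\tau M''$. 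Inductively, $\tau M$ is filtered by quotients of the $\tau S_i$. Applying $\tau^{m-1}$ would then need right exactness to pass to quotients, but $\tau$ of a quotient is not a quotient of $\tau$.
\end{enumerate}

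The main obstacle is exactly this last step: $\tau$ is not left exact, so "filtered by quotients" does not iterate. To get around it I would use the equivalence between $\tau$ and its preprojective description: for hereditary $A$, $\tau^m M=0$ if and only if $M$ lies in the additive closure of $\tau^{-j}A$, $j<m$. For hereditary $A$ the class $\{M:\tau^mM=0\}$ is closed under extensions and quotients. Closure under quotients holds because $\tau$ of a submodule is a quotient of $\tau$ of a module when $A$ is hereditary, dually via $\tau^{-1}$; I would verify these closure properties by induction on $m$ using $\Hom(\tau^{-j}A,-)$ vanishing criteria.

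With these closure properties in hand, the result follows at once by induction on length: every module is an iterated extension of simples, each of which lies in the class by hypothesis.
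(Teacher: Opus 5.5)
\textbf{There is a genuine gap: you have the exactness of $\tau$ backwards, and the essential step is never proved.} For hereditary $A$, applying $\Hom_A(-,A)$ to $0\to X\to Y\to Z\to 0$ gives the exact sequence $\Ext^1_A(Z,A)\to\Ext^1_A(Y,A)\to\Ext^1_A(X,A)\to 0$, because $\Ext^2_A=0$. The exact contravariant functor $D$ turns this into $0\to\tau X\to\tau Y\to\tau Z$. So $\tau=D\Ext^1_A(-,A)$ is \emph{left} exact, not right exact. Right exactness already fails for $0\to\rad P\to P\to S\to 0$, with $P$ a non-simple indecomposable projective and $S$ its top: $\tau P=0$ but $\tau S\neq 0$.

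The obstacle you then identify is not real. A composite of left exact functors is left exact, so $\tau^m$ is left exact. Hence $0\to\tau^m M'\to\tau^m M\to\tau^m M''$ is exact for every short exact sequence, and $\{M\colon\tau^m M=0\}$ is closed under extensions. Induction on composition length then finishes the proof, for instance via $0\to\rad M\to M\to\top M\to 0$ with $\tau^m\top M=0$ by additivity. This is exactly the paper's argument. Even under your mistaken belief, a composite of right exact functors would be right exact. So tracking ``filtrations by quotients'' was never needed; only the vanishing of the middle term matters.

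Your fallback does not repair this. The claim that $\{M\colon \tau^mM=0\}$ is closed under quotients is false. For $m=1$ this class is $A\proj$, and $P\twoheadrightarrow S$ above is a counterexample. Correspondingly, for hereditary $A$, $\tau$ of a submodule is a \emph{sub}module of $\tau$ of the module, not a quotient of it. The one property your final induction actually needs, closure under extensions, is left as an unspecified verification ``by induction on $m$ using $\Hom(\tau^{-j}A,-)$ vanishing criteria''. So as written, the proposal contains no proof of its essential step.
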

	\begin{proof}
		Since $A$ is hereditary, then $\tau=D\Ext_A^1(-, A)$. Thus $\tau$ is left exact and $F:=\tau^m$ is also left exact. Let $M\in A\m$. Then $0\rightarrow F\rad M\rightarrow FM\rightarrow F\top M=0$ is exact. By induction on the composition length of modules, we infer that $FM=0$.
	\end{proof}

Under our convention, if $A=kQ$, from $\Hom_A(Ae_j, Ae_i)\cong e_jAe_i$ we get that there exists an irreducible morphism $Ae_j\rightarrow Ae_i$ in $A\proj$ if and only if there exists an arrow $i\rightarrow j\in Q_1$. Hence, $\Gamma(A\proj)=Q^{op}$.
It is well known that for hereditary algebras, we can read the original quiver $Q$ directly inside the Auslander-Reiten quiver of $A$. This is due to the following.

	\begin{Lemma}\label{irreducibleprojectives}
	Let $A$ be a hereditary algebra. Given $X, Y\in A\proj$, a map $f\in \Hom_A(X, Y)$ is irreducible in $A\proj$ if and only if it is is irreducible in $A\m$.
\end{Lemma}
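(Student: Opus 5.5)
One direction is free. By Lemma \ref{irreduciblefullsubcategories}, applied with $\mathcal{B}=A\proj$ (which is additive, closed under direct sums and summands) inside $\mathcal{A}=A\m$, any map between projectives that is irreducible in $A\m$ is irreducible in $A\proj$. The work is in the converse.

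So let $f\colon X\to Y$ with $X,Y\in A\proj$ be irreducible in $A\proj$. By the first part of Lemma \ref{irreduciblefullsubcategories}, $f$ is neither a split monomorphism nor a split epimorphism in $A\m$. Now suppose $f=h\circ g$ with $g\colon X\to Z$ and $h\colon Z\to Y$ for some $Z\in A\m$. I want to replace $Z$ by a projective module while keeping the factorisation.

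\textbf{Step 1: replace $Z$ by $Z'=\Im g$.} The module $Z'$ lies inside $Z$. We have $f=h|_{Z'}\circ g'$, where $g'\colon X\to Z'$ is the corestriction of $g$. Also $h|_{Z'}(Z')=h(g(X))=f(X)\subseteq Y$. Since $A$ is hereditary, $f(X)$ is a submodule of the projective module $Y$, hence projective. But $Z'$ is a quotient of $X$, not a submodule of a projective, so this alone does not make $Z'$ projective. I therefore use the image of $h$ on a different object.

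\textbf{Step 2: factor through $Y$ instead.} Write $h=\iota\circ h'$, where $h'\colon Z\to \Im h$ is the corestriction and $\iota\colon \Im h\hookrightarrow Y$ is the inclusion. The module $P:=\Im h$ is a submodule of $Y$, hence projective because $A$ is hereditary. This gives the factorisation
\[
f=\iota\circ (h'\circ g)
\]
inside $A\proj$. Irreducibility in $A\proj$ now says that either $h'\circ g$ is a split monomorphism or $\iota$ is a split epimorphism.

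\textbf{Step 3: transfer back to $g$ and $h$.}
\begin{enumerate}[(i)]
\item If $\iota$ is a split epimorphism, then $\iota$ is surjective, so $\Im h=Y$ and $h$ is surjective onto the projective module $Y$. A surjection onto a projective splits, so $h$ is a split epimorphism.
\item If $h'\circ g$ is a split monomorphism, say $r\circ h'\circ g=\id_X$, then $(r\circ h')\circ g=\id_X$, so $g$ is a split monomorphism.
\end{enumerate}
In both cases the factorisation $f=h\circ g$ has the required form, so $f$ is irreducible in $A\m$.

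The only non-formal point, and the place I expected the main obstacle, is the choice of intermediate object. The naive choice $\Im g$ need not be projective; the right choice is $\Im h\subseteq Y$, which is projective by heredity. Once that choice is made, the rest is routine.
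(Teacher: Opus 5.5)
Your proof is correct and follows the paper's argument essentially verbatim: you factor $h$ through the inclusion $\Im h\hookrightarrow Y$, use heredity to see that $\Im h$ is projective, and then transfer the split-mono/split-epi alternative back to $g$ and $h$. Step~1 is an abandoned detour and can simply be deleted.
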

\begin{proof}
	If $f$ is irreducible in $A\m$, then it is irreducible in $A\proj$ by Lemma \ref{irreduciblefullsubcategories}. Assume that $f$ is irreducible in $A\proj$. By Lemma \ref{irreduciblefullsubcategories}, $f$ is not a split monomorphism nor a split epimorphism in $A\m$. Assume that $f$ admits a factorisation $f=h\circ g$ for some $g\in \Hom_A(X, Z)$ and $h\in \Hom_A(Z, Y)$. Since $A$ is hereditary and $\Im(h)\subset Y$, $\Im(h)\in A\proj$, moreover we can write $h=i\circ h'$ where $i$ is the inclusion $\Im(h)\subset Y$ and $h'\in \Hom_A(Z, \Im(h))$. Since $f$ is irreducible in $A\proj$ either $i$ is a split epimorphism or $h'\circ g$ is a split monomorphism. If the latter occurs, then $g$ is a split monomorphism, otherwise $\Im(h)=Y$ and so $h$ would split since $Y\in A\proj$. Thus, $f$ is irreducible in $A\m$.
\end{proof}

\subsection{Quasi-hereditary structures}\label{Quasi-hereditary structures}

 Quasi-hereditary algebras were introduced in \cite{MR961165}.  Standard references for basic properties on quasi-hereditary algebras are \cite{MR961165, Dlab1989d, MR1211481, MR1128706, PS88} and \cite[A]{MR1284468}.

 We will use the module theoretical definition of a quasi-hereditary algebra.
Let $A$ be a finite-dimensional algebra and $\L=(\L, \leq)$ a finite poset indexing a complete set of simple $A$-modules $\{S(\l)\colon \l\in \L \}$.
Denote by $P(\l)$ the projective cover of $S(\l)$ and by $I(\l)$ the injective hull of $S(\l)$ for $\l\in \L$. In particular, $I(\l)=DP_{op}(\l)$, where $P_{op}(\l)$ is the projective cover of $DS(\l)$ as right $A$-module.

The standard module $\St(\lambda)$ is the maximal quotient of $P(\lambda)$ whose composition factors are of the form $S(\mu)$ with $\mu\leq \lambda$. The costandard module $\Cs(\lambda)$ is the maximal submodule of $I(\lambda)$ whose composition factors are of the form $\mu\leq \lambda$. In particular, $\Cs(\l)=D\St_{A^{op}}(\l)$ for every $\l\in \L$. Whenever we want to emphasize the underlying algebra,  we write $\St_A(\lambda)$ instead of just $\St(\lambda)$. We use the analogous notation $\Cs_A(\l)$ for costandard modules.

Given a finite set $\Theta$ of objects in $A\m$, we denote by $\mathcal{F}(\Theta)$ the full subcategory of $A\m$ whose modules $M$ admit a filtration $0=F_m\subset F_{m-1}\subset \cdots\subset F_0=M$ such that each $F_i/F_{i+1}\in \Theta$ for each $0\leq i\leq m-1$.

\begin{Def}
	A finite-dimensional algebra $(A, \Lambda, \leq)$ is called \emph{quasi-hereditary} if for all $\lambda\in \L$:
	\begin{enumerate}[(1)]
		\item $\ker (P(\lambda)\rightarrow \St(\l))\in \mathcal{F}(\{\St(\mu)\colon \mu>\lambda\})$;
		\item $[\St(\lambda):S(\lambda)]=1$.
	\end{enumerate}
\end{Def}
It is also common to say that if (1) and (2)  hold, then $A$ admits a quasi-hereditary structure (or a highest weight structure). 
The condition (2) is equivalent to requiring that $\End_A(\St(\lambda))=k$ for all $\l\in \L$ (see \cite{MR1211481}). Moreover if $(A, \Lambda, \leq)$ is quasi-hereditary, then so is $(A^{op}, \Lambda, \leq)$ is quasi-hereditary and its standard modules are the duals of the costandard $A$-modules (see for instance \citep[Theorem 1]{MR1211481}).

We will write $\St_A$ and $\Cs_A$ (or just $\St$ or $\Cs$ when no ambiguity of $A$ exists) to denote the sets $\{\St(\l)\colon \l\in \L\}$ and $\{\Cs(\l)\colon \l\in \L\}$, respectively.

\begin{Remark}
	If $(A, \L, \leq)$ is quasi-hereditary, then there exists a total order $\preceq$ on $\L$ refining $\leq$ such that the corresponding standard modules and costandard modules are unchanged (see for instance \citep[Proposition 1.3]{zbMATH04150432} and \cite{MR961165}).
\end{Remark}

Examples of quasi-hereditary algebras include hereditary algebras and algebras of global dimension at most two. Further, hereditary algebras are quasi-hereditary with respect to any total order on the set of non-isomorphic classes of simple modules (see \cite{Dlab1989d, zbMATH04150432}). Further, hereditary algebras belong to the special class of quasi-hereditary algebras known as strongly quasi-hereditary algebras (see \citep[Theorem 4.1]{zbMATH07164450}). 

\subsubsection{The exact structure of $\mathcal{F}(\St)$ and $\mathcal{F}(\Cs)$} In this paragraph, we assume that $(A, \Lambda, \leq)$ is a quasi-hereditary algebra. 

For quasi-hereditary algebras the subcategories $\mathcal{F}(\Cs)$ and $\mathcal{F}(\St)$ are Krull-Schmidt categories and determine each other, in the sense that $\mathcal{F}(\St)={}^{\perp_1} \mathcal{F}(\Cs)$ and $\mathcal{F}(\Cs)=\mathcal{F}(\St)^{\perp_1}$ (see for instance \citep[Theorem 1]{MR1211481}). Moreover, $\mathcal{F}(\St)={}^{\perp} \mathcal{F}(\Cs)$ and $\mathcal{F}(\Cs)=\mathcal{F}(\St)^{\perp}$.  So it follows that $\mathcal{F}(\St)$ is a resolving subcategory of $A\m$, that is, it contains all projective modules, it is closed under extensions, closed under direct sums and summands and closed under kernels of epimorphisms.

However, $\mathcal{F}(\St)$ is not necessarily abelian because in general it is not closed under cokernels of monomorphisms. 
Being extension closed, it admits a canonical exact structure.
Indeed, both $\mathcal{F}(\St)$ and $\mathcal{F}(\Cs)$ inherit exact structures from the ambient abelian category $A\m$, with conflations given by the short exact sequences in $A\m$ whose terms belong to the respective subcategory. 

Since $\mathcal{F}(\St)$ is resolving, the objects $X\in \mathcal{F}(\St)$ in $\mathcal{F}(\St)\cap {}^{\perp_1} \mathcal{F}(\St)$ are the projective modules (see for instance \citep[Theorem 3 and Corollary 2]{MR1128706}).
This means that the projective objects in $\mathcal{F}(\St)$ are the projective modules over $A$. Dually, the injective objects in $\mathcal{F}(\Cs)$ are the injective modules.

The injective objects in $\mathcal{F}(\St)$ are the objects in $\mathcal{F}(\St)\cap \mathcal{F}(\St)^{\perp 1}=\mathcal{F}(\St)\cap \mathcal{F}(\Cs)$ and the projective objects in $\mathcal{F}(\Cs)$ are the objects in $\mathcal{F}(\St)\cap \mathcal{F}(\Cs)$.

In \cite{MR1128706}, it was proved that both the categories $\mathcal{F}(\St)$ and $\mathcal{F}(\Cs)$ are functorially finite and thus by \cite{zbMATH03715763} they have Auslander-Reiten sequences.

\subsubsection{The characteristic tilting module and Ringel duality} \label{The characteristic tilting module and Ringel duality}

By a \emph{tilting} $A$-module $M$, we mean a module $M$ with finite projective dimension such that  $\Ext_A^i(M, M)=0$ for all $i>0$ and there is an exact sequence $0\rightarrow A\rightarrow M_1 \rightarrow \cdots \rightarrow M_t$ with $M_i\in \add M$ for all $i\geq 1$ and some $t\geq 0$.
The injective objects in the exact structure of $\mathcal{F}(\St)$ are the modules in the additive closure of a very important tilting module.

\begin{Prop}
	Let $(A, \L, \leq)$ be a quasi-hereditary algebra.  Then there are unique indecomposable $A$-modules $T(\l)$ together with exact sequences
	\begin{align*}
		0\rightarrow  \St(\l)\rightarrow T(\l)\rightarrow X(\l)\rightarrow 0, \quad X(\l)\in \mathcal{F}(\{\St(\mu)\colon \mu<\l, \ \mu\in \L\})\\
		0\rightarrow Y(\l)\rightarrow T(\l)\rightarrow \Cs(\l)\rightarrow 0, \quad Y(\l)\in \mathcal{F}(\{\Cs(\mu)\colon \mu<\l, \ \mu\in \L \}).
	\end{align*}
	Furthermore, for $T=\bigoplus_{\l\in \L} T(\l)$, $\add T=\mathcal{F}(\St)\cap \mathcal{F}(\Cs)$. 
\end{Prop}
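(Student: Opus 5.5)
We follow Ringel's classical argument. Throughout we use that $\mathcal{F}(\St)={}^{\perp}\mathcal{F}(\Cs)$ and $\mathcal{F}(\Cs)=\mathcal{F}(\St)^{\perp}$ (both with respect to all higher $\Ext$), as recalled above, together with the equality $\Ext^1_A(\St(\l),\St(\mu))\neq 0\Rightarrow \l<\mu$. We fix a total order refining $\leq$; the standard and costandard modules do not change.

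\emph{Existence.} We build $T(\l)$ by universal extensions. Start with $M_0=\St(\l)$. We add standard modules $\St(\mu)$ with $\mu<\l$ from the top, in decreasing order of $\mu$. At the step for $\mu$, take a basis of $\Ext^1_A(\St(\mu),M)$, where $M$ is the module built so far. Form the universal extension
\[0\to M\to M'\to \St(\mu)^{d}\to 0.\]
Applying $\Hom_A(\St(\mu),-)$, we get $\Ext^1_A(\St(\mu),M')=0$. The vanishing already obtained for larger $\nu$ persists, because $\Ext^1_A(\St(\nu),\St(\mu))=0$ when $\nu>\mu$.

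At the end we have an exact sequence $0\to\St(\l)\to M\to X\to 0$ with $X\in\mathcal{F}(\{\St(\mu)\colon\mu<\l\})$. We also have $\Ext^1_A(\St(\nu),M)=0$ for all $\nu<\l$. For $\nu\geq\l$ this vanishing holds automatically: $\Ext^1(\St(\nu),\St(\l))=0$ because we cannot have $\nu<\l$, and $\Ext^1(\St(\nu),X)=0$ because $X$ is filtered by $\St(\mu)$ with $\mu<\l\leq\nu$. Hence $M\in\mathcal{F}(\St)\cap\mathcal{F}(\St)^{\perp_1}=\mathcal{F}(\St)\cap\mathcal{F}(\Cs)$.

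Next we check that $\St(\l)$ occurs exactly once and at the bottom, and we take $T(\l)$ to be the indecomposable summand of $M$ containing the image of $\St(\l)$. Here we use that $\End(\St(\l))=k$ and that the composition multiplicity $[M:S(\l)]$ equals $1$. The second sequence, $0\to Y(\l)\to T(\l)\to\Cs(\l)\to0$, comes from the $\Cs$-filtration of the module $T(\l)\in\mathcal{F}(\Cs)$. We order the filtration so that $\Cs(\l)$ sits on top; this is possible because $\l$ is the unique maximal weight among the composition factors, and the top $\Cs$-factor must be $\Cs$ of the maximal weight. This uses the dual Ext-vanishing $\Ext^1(\Cs(\mu),\Cs(\l))\neq0\Rightarrow\mu>\l$.

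\emph{Uniqueness and $\add T$.} Let $N\in\mathcal{F}(\St)\cap\mathcal{F}(\Cs)$ be indecomposable, and let $\l$ be the maximal weight among its composition factors. Then $\St(\l)$ is a submodule of $N$, and the inclusion $\St(\l)\to T(\l)$ has cokernel in $\mathcal{F}(\St)$. By $\Ext^1(\mathcal{F}(\St),N)=0$ the map $\St(\l)\to N$ extends to a map $T(\l)\to N$. Symmetrically, using $\Ext^1(T(\l),\text{kernel})=0$, the inclusion extends to a map $N\to T(\l)$. The composites restrict to the identity, up to scalar, on $\St(\l)$. Since $T(\l)$ and $N$ are indecomposable with local endomorphism rings, a composite that is nonnilpotent is an isomorphism. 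It is nonnilpotent here because $[\,\cdot:S(\l)]=1$ and the composite acts nontrivially on that composition factor. Hence $N\cong T(\l)$. This gives uniqueness, and shows that the indecomposables of $\mathcal{F}(\St)\cap\mathcal{F}(\Cs)$ are exactly the $T(\l)$. Therefore $\add T=\mathcal{F}(\St)\cap\mathcal{F}(\Cs)$.

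The main obstacle is the bookkeeping in the universal-extension step: we must ensure that later extensions do not destroy the vanishing obtained earlier, and that the summand $T(\l)$ retains $\St(\l)$ as its bottom factor. If the construction becomes messy, we will instead cite \cite{MR1128706}, where this proposition is proved.
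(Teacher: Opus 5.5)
Your proposal is correct and is essentially the classical universal-extension argument behind Proposition 2 of \cite{MR1128706}; the paper gives no proof of its own and only cites that result. Two small repairs are needed. First, the map $N\to T(\l)$ extending $\St(\l)\hookrightarrow T(\l)$ exists because $\Ext^1_A(N/\St(\l),T(\l))=0$, not because of an $\Ext^1(T(\l),-)$-vanishing; this uses $N/\St(\l)\in\mathcal{F}(\St)$, which holds once $\St(\l)$ has been moved to the bottom of a $\St$-filtration of $N$. Second, to get the filtration conditions for the partial order $\leq$ rather than your total refinement, note that $\Ext^1_A(\St(\mu),M)=0$ at every stage of the construction unless $\mu<\l$ in $\leq$, so only such $\St(\mu)$ are ever added.
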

\begin{proof}
	See for example Proposition 2 of \cite{MR1128706}.
\end{proof} The module $T$ is known as the (basic) \emph{characteristic tilting module}. This module completely determines both $\mathcal{F}(\St)$ and $\mathcal{F}(\Cs)$.
\begin{Prop}\label{characteristictilting}	Let $(A, \L, \leq)$ be a quasi-hereditary algebra and $T$ be the characteristic tilting module. The following assertions hold.
	\begin{multicols}{2}
		\begin{enumerate}[(a)]
			\item $\mathcal{F}(\St)=\{M\in A\m \colon  \Ext_A^{i>0}(M, T)=0 \}$;
			\item $\mathcal{F}(\Cs)= \{M\in A\m \colon  \Ext_A^{i>0}(T, M)=0 \}.$
		\end{enumerate}
	\end{multicols}
	\begin{enumerate}[(c)]
		\item 	 The algebra $\End_A(T)^{op}$ is quasi-hereditary together with the standard modules $\Hom_A(T, \Cs(\l))$, $\l\in \L$, and the poset $\L$ equipped with the reversed order.  Moreover, the functor $\Hom_A(T, -)$ induces an exact equivalence between $\mathcal{F}(\Cs)$ and $\mathcal{F}(\Hom_A(T, \Cs))$
	\end{enumerate}
\end{Prop}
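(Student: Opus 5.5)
The proof splits into the three assertions (a), (b), (c). For (a) and (b) I would use the defining properties of $T$ from the preceding Proposition: $T$ is tilting and $\add T=\mathcal{F}(\St)\cap\mathcal{F}(\Cs)$. The orthogonality relations recalled earlier, $\mathcal{F}(\St)={}^{\perp}\mathcal{F}(\Cs)$ and $\mathcal{F}(\Cs)=\mathcal{F}(\St)^{\perp}$, play the main role.

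For (a), the inclusion $\mathcal{F}(\St)\subseteq\{M\colon \Ext^{i>0}_A(M,T)=0\}$ is immediate, since $T\in\mathcal{F}(\Cs)$ and $\mathcal{F}(\St)={}^{\perp}\mathcal{F}(\Cs)$. For the converse, take $M$ with $\Ext^{i>0}(M,T)=0$. It suffices to show $\Ext^{i>0}(M,\Cs(\l))=0$ for all $\l$, and for this I will show that every costandard module admits a finite $\add T$-coresolution, proceeding by induction on $\l$.
\begin{itemize}
\item The defining sequence $0\to Y(\l)\to T(\l)\to\Cs(\l)\to 0$ runs the wrong direction for this purpose.
\item Instead I use the fact that $\mathcal{F}(\Cs)$ is coresolving, with injective cogenerator $T$ in the relevant sense: every $N\in\mathcal{F}(\Cs)$ embeds into some $T_0\in\add T$ with cokernel in $\mathcal{F}(\Cs)$. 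This embedding is obtained as the universal extension by $\St$-modules, i.e.\ the dual of the Bongartz-type construction used to build $T(\l)$.
\item Iterating, and using that injectives have finite $\add T$-coresolutions (as $T$ is tilting of finite injective dimension), one gets finite coresolutions $0\to\Cs(\l)\to T_0\to\cdots\to T_r\to 0$.
\end{itemize}
Dimension shifting along this coresolution then gives $\Ext^{i}(M,\Cs(\l))=0$ for all $i>0$. Assertion (b) is the dual statement, obtained by applying $D$ and passing to $A^{op}$, using $\Cs_A(\l)=D\St_{A^{op}}(\l)$.

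For (c), set $R=\End_A(T)^{op}$ and $F=\Hom_A(T,-)$. Since $\Ext^{1}(T,-)$ vanishes on $\mathcal{F}(\Cs)$ by (b), $F$ is exact on conflations of $\mathcal{F}(\Cs)$. Full faithfulness on $\mathcal{F}(\Cs)$ follows from the coresolutions above: apply $\Hom$ to $0\to N\to T_0\to T_1$ and compare with $R$-module maps between $F T_0\to F T_1$, where $F|_{\add T}$ is an equivalence onto $R\proj$. The essential image is the extension closure of the $F\Cs(\l)$, which is closed under extensions because $\Ext^1_R(FX,FY)\cong\Ext^1_A(X,Y)$ for $X,Y\in\mathcal{F}(\Cs)$; this isomorphism is again proved via the resolutions.

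It remains to show that $R$ is quasi-hereditary with respect to the reversed order, with standard modules $F\Cs(\l)$.
\begin{itemize}
\item $F\Cs(\l)$ is a quotient of the projective $FT(\l)$, by applying $F$ to $T(\l)\to\Cs(\l)$ and using exactness.
\item The kernel $FY(\l)$ is filtered by $F\Cs(\mu)$ with $\mu<\l$, which are larger in the reversed order.
\item $\End_R(F\Cs(\l))\cong\End_A(\Cs(\l))=k$.
\end{itemize}
One must also check that $F\Cs(\l)$ is genuinely the maximal quotient with composition factors of index $\geq\l$ in the reversed order. I expect this identification to be the main obstacle. I would handle it through $\Hom$-vanishing: $\Hom_A(T(\mu),\Cs(\l))\neq0$ forces $\mu\le\l$, since $T(\mu)$ is filtered by $\St(\nu)$ with $\nu\le\mu$ and $\Hom(\St(\nu),\Cs(\l))\ne0$ only when $\nu=\l$. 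I would then invoke the criterion that a standardly stratified datum with $\End=k$ and these filtrations is quasi-hereditary, as in the references cited before the Proposition.
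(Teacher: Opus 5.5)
Your proof of (a) runs in the wrong direction, and as written it fails. Inside $\mathcal{F}(\Cs)$ the module $T$ is the Ext-projective generator, not an injective cogenerator: the injective objects of $\mathcal{F}(\Cs)$ are the injective $A$-modules. So the embedding $N\hookrightarrow T_0$ with $T_0\in\add T$ and cokernel in $\mathcal{F}(\Cs)$ does not exist in general. Every injective module lies in $\mathcal{F}(\Cs)$, and a monomorphism out of it splits, so it would have to lie in $\add T$. In the paper's example $1\to 2\to 3$ with $3<1<2$ one has $\Cs(2)=I(2)\notin\add T$, so $\Cs(2)$ has no $\add T$-coresolution at all. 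For the same reason injectives do not have $\add T$-coresolutions; the cotilting property gives $\add T$-\emph{resolutions} of $DA$. Moreover, even when a finite coresolution $0\to N\to T_0\to\cdots\to T_r\to 0$ exists, dimension shifting against $\Ext^{>0}_A(M,T)=0$ only gives $\Ext^i_A(M,N)=0$ for $i>r$. For example, $\Ext^1_A(M,N)\cong\operatorname{coker}(\Hom_A(M,T_0)\to\Hom_A(M,T_0/N))$, which need not vanish.

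The sequence $0\to Y(\l)\to T(\l)\to\Cs(\l)\to 0$ that you set aside is exactly the right tool. Suppose $\Ext^{>0}_A(M,T)=0$ and, inductively, $\Ext^{>0}_A(M,\Cs(\mu))=0$ for all $\mu<\l$. Then $\Ext^{>0}_A(M,Y(\l))=0$, and the long exact sequence places $\Ext^i_A(M,\Cs(\l))$ between $\Ext^i_A(M,T(\l))=0$ and $\Ext^{i+1}_A(M,Y(\l))=0$. Hence $M\in{}^{\perp}\mathcal{F}(\Cs)=\mathcal{F}(\St)$. (The paper itself gives no argument here and only cites Ringel.)

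The same reversal is needed in (c). Full faithfulness of $F=\Hom_A(T,-)$ on $\mathcal{F}(\Cs)$ comes from presentations $T_1\to T_0\to N\to 0$ with kernels in $\mathcal{F}(\Cs)$. These are obtained from the sequences above by induction along a $\Cs$-filtration, lifting maps via $\Ext^1_A(T,\mathcal{F}(\Cs))=0$. Then $FT_1\to FT_0\to FN\to 0$ is a projective presentation over $R$, and $\Hom_R(FN,FX)\cong\ker(\Hom_A(T_0,X)\to\Hom_A(T_1,X))\cong\Hom_A(N,X)$. A left exact sequence $0\to N\to T_0\to T_1$ would only reduce the question to targets in $\add T$, which again needs such presentations of the source.

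The rest of your plan for (c) is fine, apart from one slip. Your own reasoning shows that $\Hom_A(T(\mu),\Cs(\l))\neq 0$ forces $\l\le\mu$, that is, $\mu\le\l$ in the \emph{reversed} order. That is what identifying $F\Cs(\l)$ as a standard module requires, together with $\dim\Hom_A(T(\l),\Cs(\l))=1$.
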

\begin{proof}
	See for example Corollary 4 and Theorem 6 of \cite{MR1128706}.
\end{proof}

Denote by $\L^{op}$ the poset $\L$ equipped with the reversed order.
The algebra $R_A:=\End_A(T)^{op}$ is called the \emph{Ringel dual} of $(A, \{\St(\l)_{\l\in \L}\})$ and the functor $\Hom_A(T, -)$ is called the \emph{Ringel dual functor}. By \cite{MR1128706}, we can see that the Ringel dual functor sends costandard modules over $A$ to standard modules over $R_A$, it sends tilting modules over $A$ to projective modules over $R_A$ and it sends injective modules over $A$ to tilting modules over $R_A$.
In particular, the Ringel dual of the Ringel dual is Morita equivalent to the original algebra.
We say that a quasi-hereditary algebra $(A, \L, \leq)$ is \emph{Ringel self-dual} if 
there exists an equivalence of categories $F\colon R_A\m\rightarrow A\m$ and a bijection $\psi\colon \L^{op}\rightarrow \L$ such that $F\St_R(\l)=\St(\psi(\l))$ for all $\l\in \L^{op}$, where $\St_R(\l)$ stands for the standard module $\Hom_A(T, \Cs(\l))$. Equivalently, by the standardisation theorem \citep[Theorem 2]{MR1211481} and Proposition \ref{characteristictilting}(c), a quasi-hereditary algebra is Ringel self-dual if there exists an exact equivalence between $\mathcal{F}(\St)$ and $\mathcal{F}(\Cs)$.

\begin{Lemma}
	Let $(A, \L, \leq)$ be a quasi-hereditary algebra that it is Ringel self-dual. Then $(A^{op}, \L, \leq)$ is also Ringel self-dual.
\end{Lemma}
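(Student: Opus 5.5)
We use the equivalent formulation of Ringel self-duality stated just before the lemma. $(A,\L,\leq)$ is Ringel self-dual if and only if there is an exact equivalence $\mathcal{F}(\St_A)\simeq\mathcal{F}(\Cs_A)$. We then transport this equivalence to $A^{op}$ using the standard duality $D=\Hom_k(-,k)$.

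First, recall from the preliminaries that $(A^{op},\L,\leq)$ is quasi-hereditary and that $\St_{A^{op}}(\l)=D\Cs_A(\l)$ for every $\l\in\L$. Applying $D$ to $\Cs_{A^{op}}(\l)=D\St_{(A^{op})^{op}}(\l)=D\St_A(\l)$ gives $\Cs_{A^{op}}(\l)=D\St_A(\l)$.

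Since $D\colon A\m\to A^{op}\m$ is an exact contravariant equivalence, it sends a filtration of $M$ with subquotients in a set $\Theta$ to a filtration of $DM$, taken in reverse order, with subquotients in $D\Theta$. The same applies to the quasi-inverse. Hence
\[
D\mathcal{F}(\St_A)=\mathcal{F}(D\St_A)=\mathcal{F}(\Cs_{A^{op}}),\qquad D\mathcal{F}(\Cs_A)=\mathcal{F}(D\Cs_A)=\mathcal{F}(\St_{A^{op}}),
\]
and $D$ maps conflations of either exact category to conflations of the other, with the order of the terms reversed.

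Now let $F\colon\mathcal{F}(\St_A)\to\mathcal{F}(\Cs_A)$ be an exact equivalence, and set
\[
G:=D\circ F\circ D\colon \mathcal{F}(\St_{A^{op}})\to\mathcal{F}(\Cs_{A^{op}}).
\]
The first $D$ takes $\mathcal{F}(\St_{A^{op}})$ to $\mathcal{F}(\Cs_A)$, not to $\mathcal{F}(\St_A)$, so we must use $F^{-1}$ instead and set
\[
G:=D\circ F^{-1}\circ D .
\]
Tracing the categories: $\mathcal{F}(\St_{A^{op}})\xrightarrow{D}\mathcal{F}(\Cs_A)\xrightarrow{F^{-1}}\mathcal{F}(\St_A)\xrightarrow{D}\mathcal{F}(\Cs_{A^{op}})$. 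The functor $G$ is covariant, being a composite of two contravariant functors and one covariant one. It is an equivalence because each factor is. It is exact because $F^{-1}$ is exact (the quasi-inverse of an exact equivalence whose inverse also reflects conflations) and $D$ preserves short exact sequences.

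By the equivalent formulation applied to $A^{op}$, this exact equivalence shows that $(A^{op},\L,\leq)$ is Ringel self-dual.

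The only subtle point is that $F^{-1}$ is exact. Here we use the standard fact that an exact equivalence between exact categories is one whose quasi-inverse is also exact. Alternatively, we use the definition via $F\St_R(\l)=\St(\psi(\l))$ and the standardisation theorem, under which the equivalence comes from a Morita equivalence and is therefore exact in both directions.
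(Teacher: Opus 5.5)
Your proof is correct and is essentially the paper's argument: transport the exact equivalence along the duality $D$, using $D\St_A=\Cs_{A^{op}}$ and $D\Cs_A=\St_{A^{op}}$. The only difference is that the paper keeps the composite $D\circ\Psi\circ D$ that you discarded, reading it as an exact equivalence $\mathcal{F}(\Cs_{A^{op}})\to\mathcal{F}(\St_{A^{op}})$; since the criterion does not care about direction, this avoids the separate justification you needed for the exactness of the quasi-inverse $F^{-1}$.
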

\begin{proof}
	Since $(A, \L, \leq)$ is Ringel self-dual, there is an exact equivalence $\Psi\colon \mathcal{F}(\St_A)\rightarrow \mathcal{F}(\Cs_A)$. The duality $D$ restricts to contravariant exact equivalences $\mathcal{F}(\Cs_A)\rightarrow \mathcal{F}(\St_{A^{op}})$ and $\mathcal{F}(\Cs_{A^{op}})\rightarrow \mathcal{F}(\St_A)$. Consequently, $D\circ \Psi \circ D\colon \mathcal{F}(\Cs_{A^{op}})\rightarrow \mathcal{F}(\St_{A^{op}})$ is an exact equivalence. Thus, $(A^{op}, \L, \leq)$  is Ringel self-dual.
\end{proof}

\subsection{Torsion pairs} \label{Torsion pairs}
Torsion pairs in abelian categories were introduced by Dickson in \cite{zbMATH03223714} and have since become a standard tool in the representation theory of finite-dimensional algebras and, in particular, in tilting theory. For torsion pairs in module categories of finite-dimensional algebras, a standard reference is \cite{zbMATH02228448}. For further background see for instance \cite{zbMATH00867897, zbMATH05179140}.

A pair  $(\mathcal{A}, \mathcal{B})$ of full subcategories of $A\m$ is a \emph{torsion pair} if $\Hom_A(M, N)=0$ for all $M\in\mathcal{A}$ and $N\in \mathcal{B}$ and for every $A$-module $X$ there exists an exact sequence $0\rightarrow tX\rightarrow X\rightarrow X/tX\rightarrow 0$ with $tX\in \mathcal{A}$ and $X/tX\in \mathcal{B}$. In such a case, such exact sequence is unique. In a torsion pair  $(\mathcal{A}, \mathcal{B})$ the subcategory $\mathcal{A}$ is called the \emph{torsion class} while $\mathcal{B}$ is called the \emph{torsion-free class}.

We say that a torsion pair $(\mathcal{A}, \mathcal{B})$ in $A\m$ is \emph{split} (or a \emph{splitting torsion pair}) if each indecomposable $A$-module lies either in $\mathcal{A}$ or in $\mathcal{B}$, equivalently if $\Ext_A^1(N, M)=0$ for all $M\in \mathcal{A}$ and $N\in \mathcal{B}$. 

\begin{Lemma}\label{dualtorsionpair}
	Let $A$ be a finite-dimensional algebra. Then a pair of subcategories $(\mathcal{X}, \mathcal{Y})$ is a torsion pair in $A\m$ if and only if the pair $(D\mathcal{Y}, D\mathcal{X})$ is a torsion pair in $A^{op}\m$. In particular, $(\mathcal{X}, \mathcal{Y})$ is a split torsion pair if and only $(D\mathcal{Y}, D\mathcal{X})$ is. 
\end{Lemma}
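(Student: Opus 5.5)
The plan is to verify the two axioms of a torsion pair directly after applying $D$, using that $D\colon A\m\to A^{op}\m$ is a contravariant exact equivalence with $D\circ D\cong \id$. Since the statement is symmetric (apply the same argument to $A^{op}$ and use $DD\mathcal{X}\simeq\mathcal{X}$ up to isomorphism closure), it suffices to prove one implication: if $(\mathcal{X},\mathcal{Y})$ is a torsion pair in $A\m$, then $(D\mathcal{Y},D\mathcal{X})$ is a torsion pair in $A^{op}\m$.

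\textbf{Orthogonality.} Take $DY\in D\mathcal{Y}$ and $DX\in D\mathcal{X}$. By the duality, $\Hom_{A^{op}}(DY,DX)\cong \Hom_A(X,Y)$, and the right-hand side is $0$ because $(\mathcal{X},\mathcal{Y})$ is a torsion pair.

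\textbf{Canonical sequences.} Given $N\in A^{op}\m$, write $N\cong DM$ with $M=DN\in A\m$. Take the torsion sequence $0\rightarrow tM\rightarrow M\rightarrow M/tM\rightarrow 0$, where $tM\in\mathcal{X}$ and $M/tM\in\mathcal{Y}$. Applying the exact contravariant functor $D$ gives
\[
0\rightarrow D(M/tM)\rightarrow DM\rightarrow D(tM)\rightarrow 0 .
\]
Its left term lies in $D\mathcal{Y}$ and its right term lies in $D\mathcal{X}$, which is exactly the sequence required for $N\cong DM$. The one small point to handle is that $D\mathcal{X}$ is defined as the modules of the form $DX$. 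So I will note that torsion classes are closed under isomorphism, and that passing through $N\cong DDN$ causes no issue.

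\textbf{Splitness.} I will use the characterisation of split torsion pairs by indecomposables. Since $D$ preserves indecomposability, an indecomposable $A^{op}$-module $DM$ lies in $D\mathcal{Y}$ or in $D\mathcal{X}$ exactly when $M$ lies in $\mathcal{Y}$ or in $\mathcal{X}$. Equivalently, one can argue via Ext: $\Ext^1_{A^{op}}(DX,DY)\cong\Ext^1_A(Y,X)$.

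There is no real obstacle here. The only care needed is in bookkeeping the reversal of roles, that is, tracking which of $\mathcal{X}$ and $\mathcal{Y}$ ends up as the torsion class and which as the torsion-free class.
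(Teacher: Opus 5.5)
Your proof is correct and takes the same approach as the paper, which dualises the defining conditions via $\Hom_{A^{op}}(DY,DX)\cong\Hom_A(X,Y)$ and handles splitness through $\Ext^1_{A^{op}}(DX,DY)\cong\Ext^1_A(Y,X)$ together with the $\Ext$-characterisation of split torsion pairs. The paper leaves two details implicit that you spell out: that exactness of $D$ transports the canonical sequence, and that splitness can also be checked directly on indecomposables.
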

\begin{proof}
	It follows as direct application of the equality $\Hom_A(M, N)\cong \Hom_{A^{op}}(DN, DM)$ in the definition of torsion pair. The second statement can be deduced for example by using the equality $\Ext_A^1(M, N)\cong \Ext_{A^{op}}(DN, DM)$ together with \citep[VI, Proposition 1.7]{zbMATH02228448}.
\end{proof}

Given $M\in A\m$, we write \[\gen M=\{N\in A\m\colon \text{ there exists an epimorphism } M_0\twoheadrightarrow N \text{ with } M_0\in \add M\}\] to be the class of modules generated by $M$ and \[\cogen M=\{N\in A\m\colon \text{ there exists a monomorphism } N\hookrightarrow M_0 \text{ with } M_0\in \add M\}\] the class of modules cogenerated by $M$.

A module $T\in A\m$ is called a \emph{classical tilting module} if $T$ is a tilting $A$-module with projective dimension at most one. In particular, over hereditary algebras all tilting modules are classical tilting modules.

For a given classical tilting $A$-module, fix $B=\End_A(T)^{op}$ and consider the following full subcategories
\begin{align*}
	\mathcal{T}(T)&=\{M\in A\m\colon \ \Ext_A^1(T, M)=0 \}, 
	& 
	\mathcal{S}(T)&=\{M\in A\m\colon \ \Hom_A(T, M)=0 \},\\
	\mathcal{Y}(T)&=\{Y\in B\m \colon \ \Ext_B^1(Y, DT)=0 \}, &
	\mathcal{X}(T)&=\{X\in B\m \colon \ \Hom_B(X, DT)=0 \},\\
	\mathcal{W}(T)&=\{X\in A\m \colon \ \Ext^1_A(X, T)=0 \}, &
	\mathcal{U}(T)&=\{X\in A\m \colon \ \Hom_A(X, T)=0 \}.
\end{align*}

We can formulate these categories also in terms of subcategories generated or cogenerated by a certain module. For instance, $\mathcal{T}(T)=\gen T$ and $\mathcal{S}(T)=\cogen \tau T$ (see for example \citep[VI, Theorem 2.5]{zbMATH02228448}).

The tilting theorem of Brenner and Butler \cite{zbMATH03697330} clarifies how these subcategories relate with each other, and in particular, provides a rich source of examples of torsion pairs arising from classical tilting modules.

\begin{Theorem}[Brenner-Butler] \label{BrennerButler}
	Let $A$ be a finite-dimensional algebra and $T$ a classical tilting module. Fix $B=\End_A(T)^{op}$. Then, the following assertions hold.
	\begin{enumerate}[(a)]
		\item The pair $(\mathcal{T}(T), \mathcal{S}(T))$ is a torsion pair in $A\m$ and $(\mathcal{X}(T), \mathcal{Y}(T))$ is a torsion pair in $B\m$.
		\item The functors $\Hom_A(T, -)$ and $T\otimes_B -$ induce quasi-inverse equivalences between $\mathcal{T}(T)$ and $\mathcal{Y}(T)$;
		\item The functors $\Ext_A^1(T, -)$ and $\Tor_1^B(T, -)$ induce quasi-inverse equivalences between $\mathcal{S}(T)$ and $\mathcal{X}(T)$. 
	\end{enumerate}
\end{Theorem}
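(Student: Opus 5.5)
Since $T$ has projective dimension at most one, $\Ext_A^1(T,-)$ is right exact and $\Ext_A^{\geq 2}(T,-)=0$; this will be used throughout. I would first establish the key equality $\mathcal{T}(T)=\gen T$, and then deduce the torsion pair in $A\m$ directly.

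\emph{Step 1: $\mathcal{T}(T)=\gen T$.}
\begin{enumerate}[(i)]
\item $\gen T\subseteq\mathcal{T}(T)$. An epimorphism $T_0\twoheadrightarrow M$ with $T_0\in\add T$ yields an epimorphism $\Ext_A^1(T,T_0)\twoheadrightarrow \Ext_A^1(T,M)$, and $\Ext_A^1(T,T_0)=0$.
\item $\mathcal{T}(T)\subseteq\gen T$. Let $\Ext_A^1(T,M)=0$ and choose an epimorphism $A^r\twoheadrightarrow M$. Push it out along $A^r\hookrightarrow T_1^r$, using the tilting coresolution $0\rightarrow A\rightarrow T_1\rightarrow T_2\rightarrow 0$ (of length one since $\operatorname{pd}T\le 1$). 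This gives $0\rightarrow M\rightarrow E\rightarrow T_2^r\rightarrow 0$ with $A^r\twoheadrightarrow M$ factoring through $T_1^r\rightarrow E$.
\item The sequence in (ii) splits because $\Ext_A^1(T_2,M)=0$. So $M$ is a summand of $E$, and $E$ is a quotient of $T_1^r\oplus M$; iterating on a suitable summand, or using a universal extension instead, shows $M\in\gen T$.
\end{enumerate}
Consequently $\gen T$ is closed under quotients and extensions.

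\emph{Step 2: the torsion pair in $A\m$.} For any $M$, let $tM$ be the trace of $T$ in $M$. Take a right $\add T$-approximation $T_0\rightarrow tM$, which is surjective. Applying $\Hom_A(T,-)$ to $0\rightarrow tM\rightarrow M\rightarrow M/tM\rightarrow 0$ gives:
\begin{enumerate}[(i)]
\item $\Hom_A(T,tM)\cong\Hom_A(T,M)$, by the definition of the trace;
\item $\Ext_A^1(T,tM)=0$, by Step 1.
\end{enumerate}
Hence $\Hom_A(T,M/tM)=0$, which gives the first half of (a).

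\emph{Step 3: the equivalence in (b).} For $M\in\mathcal{T}(T)$, take a right $\add T$-approximation $T_0\rightarrow M$ and let $K$ be its kernel. Because $\Hom_A(T,T_0)\rightarrow\Hom_A(T,M)$ is onto, $\Ext_A^1(T,K)=0$, so $K\in\mathcal{T}(T)$. Iterating gives an $\add T$-resolution that stays exact under $\Hom_A(T,-)$. This is a projective resolution of $\Hom_A(T,M)$ over $B$, since $\Hom_A(T,-)$ sends $\add T$ to $B\proj$. Tensoring back with $T\otimes_B-$, and using $T\otimes_B\Hom_A(T,T_0)\cong T_0$ and the five lemma, shows:
\begin{enumerate}[(i)]
\item the counit $T\otimes_B\Hom_A(T,M)\rightarrow M$ is an isomorphism;
\item $\Tor_1^B(T,\Hom_A(T,M))=0$.
\end{enumerate}
By adjunction $D\Tor_1^B(T,Y)\cong\Ext_B^1(Y,DT)$, so (ii) says $\Hom_A(T,M)\in\mathcal{Y}(T)$. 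Conversely, for $Y\in\mathcal{Y}(T)$, apply $T\otimes_B-$ to a projective presentation of $Y$; the vanishing of $\Tor_1^B$ and the fact that $\Hom_A(T,T\otimes_B P)\cong P$ for $P\in B\proj$ give that the unit is an isomorphism and $T\otimes_BY\in\gen T$. This proves (b).

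\emph{Step 4: part (c) and the torsion pair in $B\m$.} This is the hardest part and proceeds in four substeps.
\begin{enumerate}[(i)]
\item \emph{Tilting over $B^{op}$.} I would first show that ${}_{}T_B$ is a tilting $B^{op}$-module with $\End_{B^{op}}(T)\cong A$, by applying $\Hom_A(-,T)$ to $0\rightarrow A\rightarrow T_1\rightarrow T_2\rightarrow 0$. In particular $\Tor_{\geq 2}^B(T,-)=0$.
\item \emph{Embedding into $\add\tau T$.} For $M\in\mathcal{S}(T)=\cogen\tau T$, embed $M$ into a module of $\add\tau T$, or use an injective coresolution $0\rightarrow M\rightarrow I_0\rightarrow I_1\rightarrow 0$ with $I_0,I_1\in\gen T$.
\item \emph{Connecting isomorphism.} Applying $\Hom_A(T,-)$ and then $T\otimes_B-$ yields a connecting isomorphism $\Tor_1^B(T,\Ext_A^1(T,M))\cong M$ by a snake-lemma argument. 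The same argument should show $T\otimes_B\Ext^1_A(T,M)=0$, so that $\Ext_A^1(T,M)\in\mathcal{X}(T)$.
\item \emph{Inverse direction and torsion pair.} Symmetrically, for $X\in\mathcal{X}(T)$, a presentation of $X$ gives $\Ext_A^1(T,\Tor_1^B(T,X))\cong X$. The torsion pair $(\mathcal{X}(T),\mathcal{Y}(T))$ then follows by the same recipe as in Step 2, applied to the tilting $B^{op}$-module $T$ and dualised through $D$.
\end{enumerate}
The main obstacle is carrying out the connecting-morphism identifications in (iii)–(iv) carefully. There I would lean on the short $\add T$-resolutions and the vanishing $\Tor_{\geq 2}^B(T,-)=0$ from (i).
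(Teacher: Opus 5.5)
Your outline is correct and is essentially the standard textbook proof of \citep[VI, Theorem 3.8]{zbMATH02228448}; the paper does not reprove the statement and simply cites that result. Only small repairs are needed: in Step 1(iii) the pushout of the epimorphism $A^r\twoheadrightarrow M$ already makes $T_1^r\to E$ surjective, so $M$, being a direct summand of $E$, lies in $\gen T$ with no iteration needed; in Step 4 the term $I_1$ should be the cokernel $I_0/M$, which lies in $\gen T$ but is not injective in general, since length-one injective coresolutions need not exist for general $A$; and the $\add T_B$-coresolution of $B_B$, which you need in order to rerun Step 1 for $T_B$, comes from applying $\Hom_A(-,T)$ to a projective resolution of $T$ rather than to the coresolution of $A$.
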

\begin{proof}
	See for example \citep[VI Theorem 3.8]{zbMATH02228448}.
\end{proof}

We are interested in these subcategories of $\End_A(T)^{op}\m$ mainly because of the following application of the tilting theorem of Brenner and Butler, due to Happel and Ringel \citep[Theorem 6.3]{zbMATH03792379}. It shows that, when $A$ is hereditary, the module category of the endomorphism algebra of a tilting modules is completely separated into two classes. We complement this with a related torsion pair in $A\m$.

\begin{Theorem}\label{separation}
	Let $A$ be a hereditary finite-dimensional algebra. If $T$ is a tilting module of $A$, then the following assertions hold.
	\begin{enumerate}[(a)]
		\item The pair $(\mathcal{X}(T), \mathcal{Y}(T))$ is a split torsion pair in $\End_A(T)^{op}\m$.
		\item The pair $(\mathcal{U}(T), \mathcal{W}(T))=(\gen \tau^{-1}T, \cogen T)$ is a torsion pair in $A\m$.
	\end{enumerate} 
\end{Theorem}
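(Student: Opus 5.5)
Part (a) is the Happel--Ringel separation theorem, so I would not reprove it from scratch but reduce it to the hereditary hypothesis. By Theorem~\ref{BrennerButler}(a), $(\mathcal{X}(T),\mathcal{Y}(T))$ is a torsion pair in $B\m$ with $B=\End_A(T)^{op}$. For splitting, take $X\in\mathcal{X}(T)$ and $Y\in\mathcal{Y}(T)$ and show $\Ext_B^1(Y,X)=0$. Write $Y=\Hom_A(T,M)$ with $M\in\mathcal{T}(T)$ and $X=\Ext_A^1(T,N)$ with $N\in\mathcal{S}(T)$, using Theorem~\ref{BrennerButler}(b),(c).

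The standard tilting-theorem isomorphisms then give
\[\Ext_B^1(\Hom_A(T,M),\Ext_A^1(T,N))\cong \Ext_A^2(M,N).\]
This identification comes from the derived equivalence $R\Hom_A(T,-)$, which sends $M$ to $Y$ and $N[1]$ to $X$. Hence
\[\Ext^1_B(Y,X)\cong\Hom_{D^b}(M,N[2])=\Ext_A^2(M,N).\]
Since $\gldim A\le 1$, this vanishes, so every short exact sequence $0\to X\to E\to Y\to 0$ splits. By the equivalent definition recalled above, $(\mathcal{X}(T),\mathcal{Y}(T))$ is split. The main obstacle is justifying the isomorphism with $\Ext_A^2$ cleanly. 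I would either cite \citep[VI, 3.x]{zbMATH02228448} or argue directly with the derived equivalence, which needs only $\operatorname{pd}T\le1$.

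Part (b) has two ingredients. First I identify the subcategories. The Auslander--Reiten formula for hereditary $A$ gives
\[\Ext_A^1(X,T)\cong D\Hom_A(\tau^{-1}T,X)\]
(the hereditary form of $\Ext^1(X,T)\cong D\Hom(T,\tau X)$, since $\tau^{-1}$ is right adjoint-like to $\tau$ on non-injective and non-projective parts). Hence $\mathcal{W}(T)=\{X\colon \Hom_A(\tau^{-1}T,X)=0\}$. Applying the duality $D$ and Lemma~\ref{dualtorsionpair} to $\mathcal{S}(T)=\cogen\tau T$ for the tilting $A^{op}$-module $DT$ yields $\mathcal{U}(T)=\gen\tau^{-1}T$. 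Indeed, $D$ turns $\Hom_A(-,T)=0$ into $\Hom_{A^{op}}(DT,-)=0$, that category is $\cogen\tau_{A^{op}}DT$, and $D\tau_{A^{op}}DT=\tau^{-1}_AT$.

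Second, I obtain the torsion pair itself. Since $DT$ is a tilting $A^{op}$-module, $(\mathcal{T}(DT),\mathcal{S}(DT))$ is a torsion pair in $A^{op}\m$ by Theorem~\ref{BrennerButler}(a). By Lemma~\ref{dualtorsionpair}, $(D\mathcal{S}(DT),D\mathcal{T}(DT))$ is then a torsion pair in $A\m$. Here $D\mathcal{S}(DT)=\{X\colon\Hom_A(X,T)=0\}=\mathcal{U}(T)$ and $D\mathcal{T}(DT)=\{X\colon\Ext^1_A(X,T)=0\}=\mathcal{W}(T)$. Finally, $D\mathcal{T}(DT)=D\gen DT=\cogen T$, which gives $\mathcal{W}(T)=\cogen T$ and completes the identification $(\mathcal{U}(T),\mathcal{W}(T))=(\gen\tau^{-1}T,\cogen T)$.
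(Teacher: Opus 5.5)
Your proposal is correct. Part (b) matches the paper's argument. Part (a) takes a genuinely different route from the paper, which gives no argument there and only cites \citep[VI, Theorem 5.6, Corollary 5.7]{zbMATH02228448}.

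For (b), you follow the paper exactly. You dualise the Brenner--Butler torsion pair $(\mathcal{T}(DT),\mathcal{S}(DT))$ of the tilting $A^{op}$-module $DT$ via Lemma~\ref{dualtorsionpair}. You then use $D\gen DT=\cogen T$ and $D\cogen \tau_{A^{op}}DT=\gen \tau^{-1}T$. Two small points:
\begin{enumerate}[(i)]
\item Your preliminary identification $\mathcal{W}(T)=\{X\colon \Hom_A(\tau^{-1}T,X)=0\}$ via the Auslander--Reiten formula is correct but never used.
\item Your parenthetical heuristic has the adjunction backwards. For hereditary $A$ one has $\Hom_A(\tau^{-1}M,N)\cong\Hom_A(M,\tau N)$, so $\tau^{-1}$ is left adjoint to $\tau$. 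The clean justification is simply that $\Ext^1_A(X,T)\cong D\Hom_A(\tau^{-1}T,X)$ because $\injdim T\le 1$.
\end{enumerate}

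For (a), you derive the splitting from Happel's derived equivalence $F=R\Hom_A(T,-)$. Using $F(M)\cong\Hom_A(T,M)$ for $M\in\mathcal{T}(T)$ and $F(N)\cong\Ext^1_A(T,N)[-1]$ for $N\in\mathcal{S}(T)$, you obtain $\Ext^1_B(Y,X)\cong\Hom_{D^b(A)}(M,N[2])=\Ext^2_A(M,N)=0$. This is valid and short, and it isolates exactly where heredity enters, namely $\Ext^2_A=0$. It actually shows, for any classical tilting module, that $(\mathcal{X}(T),\mathcal{Y}(T))$ splits precisely when $\Ext^2_A(\mathcal{T}(T),\mathcal{S}(T))=0$. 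The cost is importing Happel's theorem and derived categories, whereas the cited textbook treatment, and the paper as a whole, stay within module-theoretic tilting and Auslander--Reiten theory.
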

\begin{proof}
	For (a), see for example \citep[VI Theorem 5.6, Corollary 5.7]{zbMATH02228448}. For (b), observe that $A^{op}$ is again hereditary and so $DT$ is a classical tilting module of $A^{op}$. By Theorem \ref{BrennerButler}(a) and Lemma \ref{dualtorsionpair}, it follows that $(\mathcal{U}(T), \mathcal{W}(T))=(D\mathcal{S}(DT), D\mathcal{T}(DT))$ is a torsion pair in $A\m$. The pairs $(\mathcal{U}(T), \mathcal{W}(T))$ and $(\gen \tau^{-1}T, \cogen T)$ coincide because $D\gen M=\cogen DM$ for every module $M\in A^{op}\m$.
\end{proof}

For hereditary algebras, Ringel duality fits particularly well into the framework of the Brenner-Butler theorem. In such a case, the characteristic tilting module $T$ is a classical tilting module and $\mathcal{F}(\Cs)$ is the torsion class $\mathcal{T}(T)$ while $\mathcal{F}(\St_{\End_A(T)^{op}})$ is the torsion-free class $\mathcal{Y}(T)$. Thus, the exact equivalence between these subcategories induced by Ringel duality is precisely the corresponding Brenner-Butler equivalence presented in (b)  of Theorem \ref{BrennerButler}. Moreover, in this setting, Ringel duality is complemented 
by the torsion-theoretic structure provided by Brenner and Butler.

\section{Properties of Ringel self-dual hereditary algebras}\label{Properties}

In this section, we investigate structural properties of Ringel self-dual hereditary algebras. Our main aim is to develop criteria for determining whether a hereditary algebra of finite representation type is Ringel self-dual without explicitly computing its Ringel dual. The key ingredient is a precise description of the characteristic tilting module, which culminates in the proof of Theorem \ref{maintheorem} at the end of the section.
 
 \subsection{Two split torsion pairs associated with the characteristic tilting module}\label{Two split torsion pairs associated with the characteristic tilting module}

The aim is to understand how Ringel self-duality constrains the position of the characteristic tilting module inside the module category of a hereditary algebra. Our strategy begins by considering the torsion pairs naturally associated with the characteristic tilting module and showing that, in the Ringel self-dual case, these torsion pairs are split. This will allow us to  show Ringel self-duality forces every indecomposable module to lie in $\mathcal{F}(\St)$, in $\mathcal{F}(\Cs)$ or both, and in the finite type case, forces the characteristic tilting module to be a complete slice in the sense of Happel and Ringel.

\begin{Prop}\label{prop}
	Let $A$ be a connected hereditary algebra, $\L$ a finite poset such that $(A, \L, \leq)$ is Ringel self-dual and $T$ its (basic) characteristic tilting module. Then the following assertions hold.
	\begin{enumerate}[(a)]
		\item The pairs $(\mathcal{F}(\Cs), \cogen \tau T)$ and $(\gen \tau^{-1}T, \mathcal{F}(\St))$ are split torsion pairs in $A\m$. Moreover, $\gen \tau^{-1}T=\{X\in A\m\colon \Hom_A(X, T)=0\}$, $\cogen \tau T=\{X\in A\m\colon \Hom_A(T, X)=0\}$, $\gen T=\mathcal{F}(\Cs)$ and $\cogen T=\mathcal{F}(\St)$.
		\item A simple $A$-module $S$ is standard (resp. costandard) if and only if $S\in \operatorname{soc}T$ (resp. $S\in \operatorname{top} T$). 
		\item $\cogen \tau T\subset \mathcal{F}(\St)$ and $\gen \tau^{-1}T\subset \mathcal{F}(\Cs)$. In particular, $\gen \tau^{-1}T\cap \cogen \tau T=\{0\}$ and every indecomposable module $X$ not in $\add T$ is either in $\mathcal{F}(\Cs)$ or in $\mathcal{F}(\St)$.
		\item Let $M\in A\m$ be an indecomposable module. If $\Hom_A(M, A)\neq 0$, then $M\in A\proj$.		
		\item If $\mathcal{F}(\St)=A\m$ or $\mathcal{F}(\Cs)=A\m$, then $A$ is semi-simple.
	\end{enumerate}
\end{Prop}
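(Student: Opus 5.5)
The strategy is to transport the split torsion pair of Theorem \ref{separation}(a) from the Ringel dual back to $A\m$, using self-duality and the identifications recalled after Theorem \ref{separation}.

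For (a), let $R=\End_A(T)^{op}$. Since $A$ is hereditary, $T$ is classical tilting. By Theorem \ref{BrennerButler} we have $\mathcal{F}(\Cs)=\mathcal{T}(T)=\gen T$ and $\mathcal{S}(T)=\cogen \tau T=\{X\colon \Hom_A(T,X)=0\}$. By Theorem \ref{separation}(a), $(\mathcal{X}(T),\mathcal{Y}(T))$ is split in $R\m$.

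Ringel self-duality gives an equivalence $F\colon R\m\to A\m$ carrying $\St_R$ to $\St_A$ up to relabelling. Hence $F$ sends $\mathcal{Y}(T)=\mathcal{F}(\St_R)$ to $\mathcal{F}(\St_A)$, and therefore sends the torsion pair $(\mathcal{X}(T),\mathcal{Y}(T))$ to a split torsion pair $(\mathcal{X}',\mathcal{F}(\St_A))$ in $A\m$. The torsion class of a torsion pair is determined by its torsion-free class, being ${}^{\perp_0}$ of it. So $\mathcal{X}'=\{X\colon \Hom_A(X,\mathcal{F}(\St))=0\}$.

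Next, $\mathcal{F}(\St)=\mathcal{W}(T)=\cogen T$, because $\mathcal{F}(\St)=\{M\colon \Ext^1_A(M,T)=0\}$ in the hereditary case by Proposition \ref{characteristictilting}(a). Theorem \ref{separation}(b) then identifies the torsion class as $\gen\tau^{-1}T=\mathcal{U}(T)=\{X\colon \Hom(X,T)=0\}$, so $(\gen\tau^{-1}T,\mathcal{F}(\St))$ is split.

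Since $(A^{op},\L,\leq)$ is also Ringel self-dual by the preceding lemma, we apply this to $A^{op}$ and dualise with Lemma \ref{dualtorsionpair}. This shows $(\mathcal{F}(\Cs),\cogen\tau T)$ is split, because $D$ exchanges $\St$ and $\Cs$ and $D(\tau_{A^{op}}DT)=\tau^{-1}$-type terms match up via $D\tau^{-1}_{A^{op}}=\tau D$.

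For (c), take an indecomposable $X\in\cogen\tau T$. By splitness of the second pair, $X$ lies in $\gen\tau^{-1}T$ or in $\mathcal{F}(\St)$. Suppose $X\in\gen\tau^{-1}T$. Then $\Hom(X,T)=0$ and $\Hom(T,X)=0$; these facts alone do not yet contradict anything, so I also use the first splitting. From it, $X\notin\mathcal{F}(\Cs)$, since $\Hom(T,X)=0$ while $X\neq 0$ would be a quotient of some $T^n$. On the other hand, $\gen\tau^{-1}T\subseteq\gen T$: indeed $\tau^{-1}T\in\mathcal{F}(\Cs)$, because $\Ext^1(T,\tau^{-1}T)\cong D\Hom(\tau^{-1}T,\tau T)$, and this vanishes since $\Hom(\tau^{-1}T,\tau T)$ factors against $\Hom(X,T)=0$-type vanishing. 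More directly, $\Ext^1(T,\tau^{-1}T)=D\overline{\Hom}(\tau^{-1}T,\tau T)$ and $\tau T\in\cogen\tau T$ forces that hom to be zero via the first torsion pair. Hence $X\in\mathcal{F}(\Cs)$, a contradiction. Therefore $\cogen\tau T\subset\mathcal{F}(\St)$, and dually $\gen\tau^{-1}T\subset\mathcal{F}(\Cs)$.

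The intersection $\gen\tau^{-1}T\cap\cogen\tau T$ lies in the torsion class and the torsion-free class of the same pair, so it is $0$. Now let $X$ be indecomposable with $X\notin\add T=\mathcal{F}(\St)\cap\mathcal{F}(\Cs)$. By the splittings, $X$ lies in $\mathcal{F}(\Cs)\cup\cogen\tau T\subseteq\mathcal{F}(\Cs)\cup\mathcal{F}(\St)$. Justifying the inclusion $\tau^{-1}T\in\mathcal{F}(\Cs)$ cleanly is where I expect the main obstacle. I would argue it via the Auslander–Reiten formula $\Ext^1(T,\tau^{-1}T)\cong D\Hom(\tau^{-1}T,\tau T)$ combined with $\Hom(\tau^{-1}T,T)=0$ and $\Hom(T,\tau T)=0$.

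For (b), if $S=\St(\l)$ is simple, then $S\in\mathcal{F}(\St)=\cogen T$, so $S$ embeds in $T$ and lies in the socle. Conversely, suppose $S\subset\operatorname{soc}T$. Then $S\in\cogen T=\mathcal{F}(\St)$, and a simple module with a $\St$-filtration is standard. The costandard case is dual, using $\gen T=\mathcal{F}(\Cs)$.

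For (d), let $M$ be indecomposable with $0\neq f\colon M\to A$. Since $A\in\mathcal{F}(\St)=\cogen T$, we get $\Hom(M,T)\neq 0$, so $M\notin\gen\tau^{-1}T$ and hence $M\in\mathcal{F}(\St)$. The image of $f$ is projective because $A$ is hereditary, so $f$ splits onto its image. This makes $M$ have a projective summand, and since $M$ is indecomposable, $M$ is projective.

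For (e), if $\mathcal{F}(\St)=A\m$, then $D(A)\in\mathcal{F}(\St)\cap\mathcal{F}(\Cs)=\add T$, and so $\gen\tau^{-1}T=0$, that is, $\tau^{-1}T=0$. Thus $T$ is injective, and $T=DA$. Then every module in $\cogen DA=A\m$ has $\Hom(M,A)\neq0$ whenever $A$ cogenerates it. Applying (d) to all the simple modules in the socle of $A$ makes these simples projective, and connectedness then gives that $A$ is simple, hence semisimple. The case $\mathcal{F}(\Cs)=A\m$ is dual.
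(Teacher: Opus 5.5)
Your parts (a), (b) and (d) are fine. Your (d) is actually more elementary than the paper's proof, which goes through the equivalence $\Psi$: your observation that $M\twoheadrightarrow\Im f$ splits shows (d) holds over any hereditary algebra.

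The gap in (c) is exactly the step you flagged: $\tau^{-1}T\in\mathcal{F}(\Cs)$ is never proved.
\begin{itemize}
\item Your first justification applies $\Hom_A(\mathcal{F}(\Cs),\cogen\tau T)=0$ to $\Hom_A(\tau^{-1}T,\tau T)$. That presupposes $\tau^{-1}T\in\mathcal{F}(\Cs)$, so it is circular.
\item The fallback via $\Hom_A(\tau^{-1}T,T)=0$ and $\Hom_A(T,\tau T)=0$ cannot work. Both vanish for every tilting module over a hereditary algebra, since both are $D\Ext^1_A(T,T)$. Yet for $k(1\to 2\to 3)$ with $3<1<2$ one has $T=S(1)\oplus S(3)\oplus P(1)$ and $\tau^{-1}T=\tau T=S(2)$. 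So $\Hom_A(\tau^{-1}T,\tau T)\neq 0$ and $\tau^{-1}T\notin\gen T$.
\end{itemize}
What is missing is the $\Ext$-criterion for split torsion pairs. Splitness of $(\mathcal{F}(\Cs),\cogen\tau T)$ means $\Ext^1_A(N,M)=0$ for all $N\in\cogen\tau T$ and $M\in\mathcal{F}(\Cs)$. Hence $\cogen\tau T\subseteq{}^{\perp_1}\mathcal{F}(\Cs)=\mathcal{F}(\St)$. Likewise, splitness of $(\gen\tau^{-1}T,\mathcal{F}(\St))$ gives $\gen\tau^{-1}T\subseteq\mathcal{F}(\St)^{\perp_1}=\mathcal{F}(\Cs)$ directly. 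This one-line argument is the paper's, and with it the rest of your (c) goes through.

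Part (e) also has a genuine gap. That the simple summands of $\operatorname{soc}A$ are projective holds over every hereditary algebra, since they are submodules of projectives. Connectedness does not upgrade this to semisimplicity. In fact nothing you use in (e) is contradictory: not $T\cong DA$, not the two split torsion pairs, and not (d). Take $A=k(1\to 2)$ with $1<2$:
\begin{itemize}
\item $\mathcal{F}(\St)=A\m$ and $T=DA=S(1)\oplus P(1)$;
\item the pairs $(\gen\tau^{-1}T,\mathcal{F}(\St))=(0,A\m)$ and $(\mathcal{F}(\Cs),\cogen\tau T)=(\add T,\add S(2))$ are both split;
\item yet $A$ is not semisimple.
\end{itemize}
The ingredient you never invoke in (e) is the self-duality equivalence itself. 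If $\mathcal{F}(\St)=A\m$, then $\mathcal{F}(\Cs)=\mathcal{F}(\St)^{\perp_1}=\add DA$ has exactly $|A|$ indecomposable objects. An exact equivalence $\mathcal{F}(\St)\simeq\mathcal{F}(\Cs)$ then forces $A\m$ to have only $|A|$ indecomposables, all of them injective. Hence every simple module is injective and $A$ is semisimple. The paper runs the dual count, using $\mathcal{F}(\St)=A\proj$.
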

\begin{proof}
By Theorem \ref{BrennerButler}, $(\mathcal{T}(T), \mathcal{S}(T))$ is a torsion pair of $A\m$ and $\gen T=T(T)=\mathcal{F}(\Cs)$.  By Theorem \ref{separation}, $(\gen \tau^{-1}T, \mathcal{F}(\St))$ is a torsion pair in $A\m$. 
 By Theorem \ref{BrennerButler}, $(\mathcal{X}(T), \mathcal{Y}(T))$ is a torsion pair in $B\m$, where $B=\End_A(T)^{op}$.
	 By \citep[VI, Theorem 5.6]{zbMATH02228448}, $(\mathcal{X}(T), \mathcal{Y}(T))$ is a split torsion pair in $B\m$.
 
  $B$ is the Ringel dual of $A$ and so $DT\cong \Hom_A(T, DA)$ is the characteristic tilting module of $B$.
 Since $A$ is Ringel self-dual, then there exists an exact equivalence of categories $F\colon B\m\rightarrow A\m$ that restricts to an equivalence of to the respective exact categories of modules having a filtration by standard modules. So $\add F(DT)=\add T$ and by applying $F$ to the torsion pair $(\mathcal{X}(T), \mathcal{Y}(T))$ we get the torsion pair
 $(\gen \tau^{-1}T, \mathcal{F}(\St))$, that is, $(\gen \tau^{-1}T, \mathcal{F}(\St))$ is a split torsion pair in $A\m$. Since $\gldim B\leq 1$, again by \citep[VI, Theorem 5.6]{zbMATH02228448} we obtain that $(\mathcal{F}(\Cs), \cogen \tau T)$ is split. So (a) holds.
 
The statement (b) is a direct consequence of (a).

Since the torsion pair $(\mathcal{F}(\Cs), \cogen \tau T)$ is split, it follows that $\Ext_A^1(M, N)=0$ for all $M\in \cogen \tau T$ and $N\in \mathcal{F}(\Cs)$. Hence $\cogen \tau T\subset \mathcal{F}(\St)$. Similarly, we get that $\gen \tau^{-1}T\subset \mathcal{F}(\Cs)$. So, $\cogen \tau T\cap \gen \tau^{-1}T\subset \add T$ but since $\Hom_A(X, T)=0$ for every $X\in \cogen \tau^{-1} T$ we get that $\cogen \tau T\cap \gen \tau^{-1}T=\{0\}$. The last statement of (c) follows from (a) together with $\cogen \tau T\subset \mathcal{F}(\St)$.

To show (d). Let $M\in A\m$ be an indecomposable module. By (a), $M$ lies either in $\gen \tau^{-1}T$ or in $\mathcal{F}(\St)$. Since $A\in \mathcal{F}(\St)$ then since $\Hom_A(M, A)\neq 0$ we get that $M\in \mathcal{F}(\St)$. Let $\Psi$ be an exact equivalence $\mathcal{F}(\St)\rightarrow \mathcal{F}(\Cs)$. Then $0\neq \Hom_A(M, A)\cong \Hom_A(\Psi M, \Psi A)\cong \Hom_A(\Psi M, T)$. So, $\Psi M$ is indecomposable and $\Psi M\notin \gen \tau^{-1}T$, hence $\Psi M\in \mathcal{F}(\St)$. Therefore, $\Psi M\in \add T$. Since the objects in $\add T$ are the projective objects of $\mathcal{F}(\Cs)$ we obtain that $M\in A\proj$.
	
It remains to prove (e). Assume that $\mathcal{F}(\Cs)=A\m$, the other case is analogous. So $\mathcal{F}(\St)={}^\perp \mathcal{F}(\Cs)=A\proj$ so $\mathcal{F}(\St)$ would have exactly $n=|A|$ indecomposable objects. Since there exists an exact equivalence between $\mathcal{F}(\Cs)$ and $\mathcal{F}(\St)$ we obtain that $A\m=\mathcal{F}(\Cs)$ would have exactly $n$ indecomposable objects, namely the $n$ simple modules, so $A$ is semi-simple.
\end{proof}

As a direct consequence, for a Ringel self-dual hereditary algebra $A$, every indecomposable direct summand of $\operatorname{soc}(A)$ is a standard module. We recall that, although the torsion pairs induced by the characteristic tilting module in Proposition \ref{prop} (a) exist for arbitrary hereditary algebras, they need not be split in general.

\begin{Example}
	Let $Q$ be the quiver $1\rightarrow 2\rightarrow 3$ and $A$ the path algebra of $Q$ over a field $k$. Consider the partial order $3<1<2$, then the standard modules are $\St(2)=P(2)$, $\St(1)=S(1), \St(3)=S(3)$ and so the characteristic tilting module is $T=S(1)\oplus S(3)\oplus P(1)$. Thus, $\tau T=\tau^{-1} T=S(2)$ and one can see that $P(2)\notin \mathcal{F}(\Cs)$ and $I(2)\notin \mathcal{F}(\St)$ so the torsion pairs $(\mathcal{F}(\Cs), \cogen \tau T)$ and $(\gen \tau^{-1}T, \mathcal{F}(\St))$ are not split.
\end{Example}

One consequence of Proposition \ref{prop} is that the module category of a Ringel self-dual hereditary algebra decomposes into three pairwise disjoint parts: $\gen \tau^{-1}T, \add T, \cogen \tau T.$ 
This raises the question of how the simple modules are distributed among them. Define 
$\mathcal{S}_0=\{\lambda\in \L\colon S(\l)=\St(\l)\neq \Cs(\l) \}$,
 $\mathcal{T}_0=\{\lambda\in \Lambda\colon S(\lambda)=\St(\lambda)=\Cs(\l)\}$, $\mathcal{U}_0=\{\lambda\in \Lambda\colon S(\lambda)=\Cs(\lambda)\neq \St(\l)\}$.
 \begin{Cor}
 		Let $A$ be a connected hereditary algebra, $\L$ a finite poset such that $(A, \L, \leq)$ is Ringel self-dual and $T$ its characteristic tilting module. Let $\l\in \L$.Then the following assertions hold.
 		\begin{enumerate}[(a)]
 			\item  $S(\l)\in \cogen \tau T$ if and only if $\lambda\in \mathcal{S}_0$.
 			\item  $S(\l)\in \add T$ if and only if $\lambda\in \mathcal{T}_0$.
 			\item  $S(\l)\in \gen \tau^{-1} T$ if and only if $\lambda\in \mathcal{U}_0$.
 			\item  If $\lambda\in \mathcal{U}_0$ (resp. $\lambda \in \mathcal{S}_0$), then $S(\lambda)$ is not projective (resp. injective).
 		\end{enumerate}
 \end{Cor}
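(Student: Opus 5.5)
The plan is to combine the trichotomy from Proposition \ref{prop}(c) with the characterisations $\mathcal{F}(\St)=\cogen T$ and $\mathcal{F}(\Cs)=\gen T$. Every indecomposable module, in particular each $S(\l)$, lies in exactly one of $\cogen\tau T$, $\add T$, $\gen\tau^{-1}T$. Here $\cogen\tau T$ and $\gen\tau^{-1}T$ are disjoint, and $\add T$ is disjoint from both because $\Hom_A(T,X)=0$ on $\cogen\tau T$ and $\Hom_A(X,T)=0$ on $\gen\tau^{-1}T$. It therefore suffices to prove three implications: $\l\in\mathcal{S}_0\Rightarrow S(\l)\in\cogen\tau T$, $\l\in\mathcal{T}_0\Leftrightarrow S(\l)\in\add T$, and $\l\in\mathcal{U}_0\Rightarrow S(\l)\in\gen\tau^{-1}T$. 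We also need that every simple falls into one of $\mathcal{S}_0,\mathcal{T}_0,\mathcal{U}_0$. That follows once we show each simple is standard or costandard.

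\textbf{Step 1: each simple is standard or costandard.} By Proposition \ref{prop}(c), $S(\l)\in\mathcal{F}(\St)$ or $S(\l)\in\mathcal{F}(\Cs)$. Suppose $S(\l)\in\mathcal{F}(\St)$. Since $S(\l)$ is simple, any $\St$-filtration has a single layer, so $S(\l)\cong\St(\mu)$ for some $\mu$. Comparing tops gives $\mu=\l$, so $S(\l)=\St(\l)$. Dually, using socles, $S(\l)\in\mathcal{F}(\Cs)$ forces $S(\l)=\Cs(\l)$. Hence the three sets partition $\L$.

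\textbf{Step 2: the three cases.} If $\l\in\mathcal{T}_0$, then $S(\l)\in\mathcal{F}(\St)\cap\mathcal{F}(\Cs)=\add T$. Conversely, if $S(\l)\in\add T$, Step 1 gives $S(\l)=\St(\l)=\Cs(\l)$; this proves (b).

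If $\l\in\mathcal{S}_0$, then $S(\l)\in\mathcal{F}(\St)$. If $S(\l)$ were in $\mathcal{F}(\Cs)$ it would equal $\Cs(\l)$, which is excluded, so $S(\l)\notin\mathcal{F}(\Cs)\supseteq\add T\cup\gen\tau^{-1}T$. By the trichotomy, $S(\l)\in\cogen\tau T$. Case (c) is the dual argument. The reverse implications in (a) and (c) follow from disjointness and the partition of $\L$: if $S(\l)\in\cogen\tau T$, then $\l\notin\mathcal{T}_0$ by (b), and $\l\notin\mathcal{U}_0$ by the forward direction of (c) together with disjointness. So $\l\in\mathcal{S}_0$.

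\textbf{Step 3: part (d).} If $\l\in\mathcal{U}_0$ and $S(\l)$ were projective, then $S(\l)$ would be in $A\proj\subseteq\mathcal{F}(\St)$. But $S(\l)\in\gen\tau^{-1}T$, and $\gen\tau^{-1}T\cap\mathcal{F}(\St)=0$ because these form a torsion pair; this is a contradiction. Alternatively, a projective simple satisfies $P(\l)=S(\l)=\St(\l)$, contradicting $\l\in\mathcal{U}_0$. The $\mathcal{S}_0$ case is dual, using that injectives lie in $\mathcal{F}(\Cs)$, which forms a torsion pair with $\cogen\tau T$.

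I expect the only mildly delicate point to be Step 1, namely identifying a simple in $\mathcal{F}(\St)$ with the standard module of the same index. The single-layer filtration plus comparison of tops handles it.
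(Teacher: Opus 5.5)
Your proof is correct and follows essentially the paper's route. You identify a simple module in $\mathcal{F}(\St)$ (resp.\ $\mathcal{F}(\Cs)$) with $\St(\l)$ (resp.\ $\Cs(\l)$) by comparing tops (resp.\ socles), read off (a)--(c) from the split torsion pairs of Proposition \ref{prop}, and obtain (d) from $A\proj\subseteq\mathcal{F}(\St)$ and its dual; your explicit partition of $\L$ into $\mathcal{S}_0,\mathcal{T}_0,\mathcal{U}_0$ only spells out what the paper leaves implicit.
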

\begin{proof}
	Observe that $S(\lambda)\in \mathcal{F}(\St)$ if and only if $S(\lambda)=\St(\lambda)$. Indeed, if $S(\lambda)\in \mathcal{F}(\St)$, then $S(\lambda)$ surjects to one standard module, so they have the same top and since $S(\lambda)$ is a simple such surjection must be a bijection. Thus by definition we get in such a case $S(\lambda)=\St(\lambda)$.

	 Similarly, $S(\lambda)\in \mathcal{F}(\Cs)$ if and only if $S(\l)=\Cs(\l)$. Thus (b) follows.  	Also by Proposition \ref{prop}, (a) follows. The statement (c) is analogous.
	
	If $\lambda\in \mathcal{U}_0$, then $S(\lambda)\notin \mathcal{F}(\St)$, so it cannot be projective. The other case in (d) is analogous.
\end{proof}

Another direct consequence of Proposition \ref{prop} concerns the exact structure of $\mathcal{F}(\St)$. Indeed, for Ringel self-duality hereditary algebras the Auslander-Reiten sequences of $\mathcal{F}(\St)$ are exactly the absolute ones in $A\m$ with terms in $\mathcal{F}(\St)$.

\begin{Cor}\label{closedundertau}
	Let $A$ be a connected hereditary algebra, $\L$ a finite poset such that $(A, \L, \leq)$ is Ringel self-dual. Then the following assertions hold.
	\begin{enumerate}[(a)]
		\item Let $Z\in\mathcal{F}(\St)$ be indecomposable and non-projective.  Then the
		Auslander--Reiten sequence
		$0\to\tau Z\to E\to Z\to 0$ of $A\m$ has all its terms in
		$\mathcal{F}(\St)$ and is an Auslander--Reiten sequence in $\mathcal{F}(\St)$.
		Conversely, every Auslander--Reiten sequence in $\mathcal{F}(\St)$ arises in
		this way.
		\item Let $X, Y\in \mathcal{F}(\St)$. A map $f\in \Hom_A(X, Y)$ is irreducible in $A\m$ if and only if it is irreducible in $\mathcal{F}(\St)$.
		\item  Let $X\in\mathcal{F}(\Cs)$ be indecomposable and non-injective.  Then the
		Auslander--Reiten sequence $0\to X\to E\to\tau^{-}X\to 0$ of $A\m$ has all
		its terms in $\mathcal{F}(\Cs)$ and is an Auslander--Reiten sequence in
		$\mathcal{F}(\Cs)$; every Auslander--Reiten sequence in $\mathcal{F}(\Cs)$
		arises in this way.
		\item Let $X, Y\in \mathcal{F}(\Cs)$. A map $f\in \Hom_A(X, Y)$ is irreducible in $A\m$ if and only if it is irreducible in $\mathcal{F}(\Cs)$.
	\end{enumerate}
\end{Cor}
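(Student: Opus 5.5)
By duality it suffices to prove (a) and (b). Parts (c) and (d) then follow by applying (a) and (b) to $(A^{op}, \L, \leq)$, which is Ringel self-dual by the earlier lemma. The duality $D$ exchanges $\mathcal{F}(\Cs_A)$ with $\mathcal{F}(\St_{A^{op}})$, reverses Auslander--Reiten sequences (it interchanges $\tau$ and $\tau^{-1}$), and preserves irreducibility of maps.

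For (a), let $Z\in\mathcal{F}(\St)$ be indecomposable and non-projective, and take the Auslander--Reiten sequence $0\to\tau Z\to E\to Z\to 0$ in $A\m$.
\begin{enumerate}[(1)]
\item $\tau Z\in\mathcal{F}(\St)$. Suppose not. By Proposition \ref{prop}(a) the torsion pair $(\gen\tau^{-1}T,\mathcal{F}(\St))$ is split, so the indecomposable $\tau Z$ lies in $\gen\tau^{-1}T$. Torsion classes are closed under quotients, so $\Hom_A(\tau Z,\mathcal{F}(\St))=0$. But the Auslander--Reiten formula gives $0\neq\Ext^1_A(Z,\tau Z)\cong D\overline{\Hom}_A(\tau Z,\tau Z)$, and the sequence is non-split. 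This is not yet a contradiction, since it only says $\tau Z$ maps nontrivially to itself. The cleaner route is the splitting itself: a nonzero element of $\Ext^1_A(Z,\tau Z)$ with $Z\in\mathcal{F}(\St)$ and $\tau Z\in\gen\tau^{-1}T$ contradicts the splitting criterion $\Ext^1_A(N,M)=0$ for $M$ torsion and $N$ torsion-free. Hence $\tau Z\in\mathcal{F}(\St)$.
\item $E\in\mathcal{F}(\St)$, since $\mathcal{F}(\St)$ is closed under extensions.
\item The sequence is almost split in $\mathcal{F}(\St)$. It is non-split, and by Lemma \ref{irreduciblefullsubcategories} the restriction of a left (resp. right) almost split map stays left (resp. right) almost split in the full subcategory. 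So it is an Auslander--Reiten sequence in $\mathcal{F}(\St)$.
\item Every Auslander--Reiten sequence of $\mathcal{F}(\St)$ arises this way. Let $0\to X\to Y\to Z\to 0$ be one. Its end term $Z$ is indecomposable and not projective in $\mathcal{F}(\St)$, and the projective objects of $\mathcal{F}(\St)$ are exactly the projective $A$-modules, so $Z$ is a non-projective $A$-module. The ambient sequence for $Z$ is, by the previous steps, an Auslander--Reiten sequence in $\mathcal{F}(\St)$. Uniqueness of Auslander--Reiten sequences ending at $Z$ then identifies the two.
\end{enumerate}

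For (b), Lemma \ref{irreduciblefullsubcategories} already gives one direction: irreducible in $A\m$ implies irreducible in $\mathcal{F}(\St)$. For the converse, take $f\colon X\to Y$ irreducible in $\mathcal{F}(\St)$. First I reduce to $X$ and $Y$ indecomposable via components, as is standard for Krull--Schmidt categories. Then $f$ is a component of the minimal right almost split map into $Y$ in $\mathcal{F}(\St)$. There are two cases.
\begin{itemize}
\item $Y$ is non-projective. The relevant map is the middle-to-end map of the Auslander--Reiten sequence of $\mathcal{F}(\St)$ ending at $Y$. By (a) this is the absolute Auslander--Reiten sequence, so $f$ is a component of an absolute right almost split map and is therefore irreducible in $A\m$.
\item $Y$ is projective. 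I would use that $\rad Y$ is projective (hereditary), so it lies in $\mathcal{F}(\St)$. The inclusion $\rad Y\to Y$ is then right almost split both in $A\m$ and in $\mathcal{F}(\St)$, hence is the minimal right almost split map in both categories. Again $f$ is a component of it, so it is irreducible in $A\m$.
\end{itemize}

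The main obstacle is making the component argument in (b) rigorous. For that step I would invoke the characterisation of irreducible maps as the components of minimal almost split maps in categories with Auslander--Reiten sequences, cited from the general theory.
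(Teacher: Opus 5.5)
Your proposal is correct and is essentially the paper's proof. In (a) you derive the $\tau$-closure of $\mathcal{F}(\St)$ directly from the $\Ext^1$-vanishing criterion for the split torsion pair $(\gen\tau^{-1}T,\mathcal{F}(\St))$, where the paper just cites this closure property; in (b) you use the same projective/non-projective case split, with the minimal right almost split map $\rad Y\hookrightarrow Y$ replacing the image-factorisation argument of Lemma \ref{irreducibleprojectives}.

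The one weak point is your reduction to indecomposable $X,Y$ ``via components'': it is not automatic, because irreducibility is not detected componentwise. The paper's non-projective case also tacitly assumes $Y$ is indecomposable. You can bypass the reduction for arbitrary $X,Y$. In a factorisation $f=hg$ through $Z\in A\m$, the map $h$ vanishes on the torsion summand $tZ$, so it factors through the split epimorphism $Z\to Z/tZ\in\mathcal{F}(\St)$; irreducibility in $\mathcal{F}(\St)$ then forces $g$ to be split mono or $h$ to be split epi.
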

\begin{proof} We only prove (a). The statement (c) is dual to (a) while statement (d) is dual to (b).
	By Proposition \ref{prop}, the pair $(\gen \tau^{-1}T, \mathcal{F}(\St))$ is a split torsion pair. So $\mathcal{F}(\St)$ is closed under $\tau$ (see for instance \citep[VI, Proposition 1.7]{zbMATH02228448}).
	Let $Z$ be as in the
	statement and $\varepsilon:0\to\tau Z\xrightarrow{f}E\xrightarrow{g}Z\to 0$ the
	Auslander--Reiten sequence of $A\m$.  By the above $\tau Z\in\mathcal{F}(\St)$, and
	$\mathcal{F}(\St)$ is closed under extensions, so $E\in\mathcal{F}(\St)$ and
	$\varepsilon$ is a conflation of $\mathcal{F}(\St)$ and it is non-split.  Let
	$C\in\mathcal{F}(\St)$ and let $t\colon C\to Z$ be a morphism which is not a
	split epimorphism in $\mathcal{F}(\St)$.  Since $\mathcal{F}(\St)\subseteq A\m$ is full,
	a split epimorphism of $t$ in $A\m$ would already lie in $\mathcal{F}(\St)$; hence $t$ is not
	a split epimorphism in $A\m$, and as $g$ is right almost split in $A\m$ there is
	$s\in\Hom_A(C,E)$ with $t=gs$.  Again by fullness, $s$ is a morphism of
	$\mathcal{F}(\St)$.  Thus $g$ is right almost split in $\mathcal{F}(\St)$, and
	dually $f$ is left almost split in $\mathcal{F}(\St)$.
	
	Let
	$\eta:0\to X'\to Y'\to Z'\to 0$ be an Auslander--Reiten sequence in
	$\mathcal{F}(\St)$.  Then $Z'$ is indecomposable and not a projective object in
	$\mathcal{F}(\St)$. Since the projectives objects in $\mathcal{F}(\St)$ are exactly
	the projective $A$-modules, $Z'$ is a non-projective $A$-module.
	By the previous paragraph the Auslander--Reiten sequence of $A\m$ ending at $Z'$ is
	an Auslander--Reiten sequence in $\mathcal{F}(\St)$, so $\eta$ is isomorphic to it
	by the uniqueness of almost split conflations in the Krull--Schmidt exact category
	$\mathcal{F}(\St)$ (see for instance \cite[\S2.2]{zbMATH01224795} or \cite{zbMATH03715763}).
	
	We shall now prove (b). It is enough to assume that $f\in \Hom_A(X, Y)$ is irreducible in $\mathcal{F}(\St)$, the other case follows from  Lemma \ref{irreduciblefullsubcategories}. By Lemma \ref{irreduciblefullsubcategories}, $f$ is neither a split monomorphism nor a split epimorphism in $A\m$.
	If $Y\in A\proj$, then since $A\proj\subset \mathcal{F}(\St)$ the same argument as done in Lemma \ref{irreducibleprojectives} yields that $f$ is irreducible in $A\m$.
	So assume that $Y$ is not projective. Consider the Auslander-Reiten sequence $0\rightarrow \tau Y\rightarrow E\xrightarrow{g} Y\rightarrow 0$. By part (a), this exact sequence lies in $\mathcal{F}(\St)$. By the right almost split property of $g$ in $A\m$, $f$ factors as $f=g\circ h$ for some $h\in \Hom_A(X, E)$. Since $\mathcal{F}(\St)$ is full in $A\m$ this is a factorisation in $\mathcal{F}(\St)$ and consequently $h$ must be a split monomorphism because $f$ is irreducible in $\mathcal{F}(\St)$. It follows that $f$ is irreducible in $A\m$ (see for instance \citep[V, Theorem 5.3]{zbMATH00707210}).
\end{proof}

To further understand the symmetry that the module category of a Ringel self-dual hereditary algebra, we shall now focus on algebras of finite representation type.

\begin{Prop} \label{counting}
	Let $A$ be a connected hereditary algebra of finite representation type, $\L$ a finite poset such that $(A, \L, \leq)$ is Ringel self-dual and $T$ its characteristic tilting module. Then the following assertions hold.
		\begin{enumerate}[(a)]
		\item Given any indecomposable module $X$ in $A\m$, there exists exactly one element $\lambda\in \Lambda$ such that $T(\l)$ and $X$ are in the same $\tau$-orbit.
		\item If $X_0\rightarrow X_1 \rightarrow \cdots \rightarrow X_t$ is a chain of non-zero maps and indecomposable modules, and $X_0, X_t\in \add T$, then $X_i\in \add T$ for all $i=0, \ldots, t$. 
		\item The number of non-isomorphic indecomposable $A$-modules is equal to $|A|+2|\cogen \tau T|=|A|+2|\gen \tau^{-1}T|$.
	\end{enumerate}
\end{Prop}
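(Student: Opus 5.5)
The plan is to work inside the Auslander-Reiten quiver of $A\m$, using the decomposition from Proposition \ref{prop}. Every indecomposable module lies in exactly one of the three disjoint classes $\gen \tau^{-1}T$, $\add T$, $\cogen \tau T$. We also know $\cogen\tau T\subset\mathcal{F}(\St)$, $\gen\tau^{-1}T\subset \mathcal{F}(\Cs)$, $\mathcal{F}(\St)=\cogen T$ and $\mathcal{F}(\Cs)=\gen T$. By Corollary \ref{closedundertau}, $\mathcal{F}(\St)$ is closed under $\tau$ and $\mathcal{F}(\Cs)$ is closed under $\tau^{-1}$.

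Part (b) comes first, since it drives (a). Let $X_0\to\cdots\to X_t$ be a chain of nonzero maps with $X_0,X_t\in\add T$, and fix an index $i$.
\begin{itemize}
\item Since $\Hom_A(X_0,X_i)\neq 0$ via the composite path, representation-directedness puts $X_i$ in $\gen T=\mathcal{F}(\Cs)$. More carefully, I would argue along the chain: if some $X_i\in\cogen\tau T$, then $\Hom_A(T,X_i)=0$ by Proposition \ref{prop}(a).
\item The nonzero map $X_0\to X_1$ then forces $X_1\notin\cogen\tau T$ when $X_0\in\add T$. Inductively, a module receiving a nonzero map from a module in $\gen T$ cannot be in $\cogen\tau T$, because $(\gen T,\cogen\tau T)$ is a torsion pair and torsion classes are closed under images.
\item The step needing care is that a nonzero map from an indecomposable of $\gen T$ to an indecomposable $Y$ forces $Y\notin \cogen\tau T$. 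This holds because the image is a nonzero quotient of a torsion module, hence torsion, yet it is a submodule of $Y$, which is torsion-free.
\item Dually, moving backwards from $X_t\in\add T=\cogen T\cap\gen T$ and using the torsion pair $(\gen\tau^{-1}T,\cogen T)$, no $X_i$ lies in $\gen\tau^{-1}T$.
\end{itemize}
The split trichotomy then gives $X_i\in\add T$.

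For (a), take an indecomposable $X$; the argument splits by class.
\begin{itemize}
\item If $X\in\cogen\tau T$, then $X\in\mathcal{F}(\St)$ is not in $\add T$. Apply $\tau^{-1}$ repeatedly: $X$ is not injective, since injectives lie in $\mathcal{F}(\Cs)$ and we are outside $\add T$. The orbit must eventually leave $\cogen\tau T$ because it ends at an injective.
\item Let $\tau^{-r}X$ be the first member not in $\cogen\tau T$. Then $Y=\tau^{-r+1}X\in\cogen\tau T\subset\cogen T$, and $\tau^{-1}Y$ must be in $\add T$ rather than $\gen\tau^{-1}T$. To see this, use the AR sequence $0\to Y\to E\to\tau^{-1}Y\to 0$: a nonsplit extension of $\tau^{-1}Y$ by $Y$ with $\tau^{-1}Y$ torsion and $Y$ torsion-free for the split pair $(\gen\tau^{-1}T,\cogen T)$ is impossible, since $\Ext^1$ vanishes there.
\item Hence every $\tau$-orbit meeting $\cogen\tau T$ hits $\add T$. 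The dual argument handles orbits meeting $\gen\tau^{-1}T$, and orbits meeting neither consist of $\add T$ modules.
\end{itemize}
Uniqueness: if $T(\l)$ and $\tau^{s}T(\l)\cong T(\mu)$ with $s>0$, consider a path $\tau^sT(\l)\to\cdots\to T(\l)$ of irreducible maps. Such a path exists in the AR quiver of a connected representation-finite hereditary algebra, going through mesh predecessors; this is the step I expect to be most delicate. By (b), every module on the path lies in $\add T$, and in particular $\tau T(\l)$ does. But $\Ext^1(T(\l),\tau T(\l))\cong D\overline{\Hom}(\tau T(\l),\tau T(\l))\neq 0$, contradicting that $T$ is rigid. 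The alternative route is to note that $T$ is a tilting module with $|A|$ summands while there are exactly $|A|$ orbits, so existence in every orbit forces exactly one summand per orbit. I would use this counting argument to avoid the delicate path.

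For (c), the $|A|$ orbits each contain exactly one $T(\l)$. The modules strictly before it in $\tau^{-1}$-order lie in $\cogen\tau T$, and those after lie in $\gen\tau^{-1}T$, by the argument in (a). So the total is $|A|+|\cogen\tau T|+|\gen\tau^{-1}T|$. Equality of the last two counts comes from the exact equivalence $\Psi\colon\mathcal{F}(\St)\to\mathcal{F}(\Cs)$, which preserves $\add T$ as the injective, respectively projective, objects. Indeed, $|\mathcal{F}(\St)|=|T|+|\cogen\tau T|$ and $|\mathcal{F}(\Cs)|=|T|+|\gen\tau^{-1}T|$.
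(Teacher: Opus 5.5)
Your proof of (b) is correct and takes a different, more elementary route than the paper, which obtains (a) and (b) by citing Happel--Ringel's results on complete slices. You push forward from $X_0$ using the split pair $(\gen T,\cogen\tau T)$ and backward from $X_t$ using the split pair $(\gen\tau^{-1}T,\cogen T)$. This puts every $X_i$ in $\gen T\cap\cogen T=\add T$, and it does not even need finite type. You should drop the opening claim that $\Hom_A(X_0,X_i)\neq 0$ ``via the composite path'': it is false in general, and your step-by-step argument does not use it. Your uniqueness argument in (a), counting $|A|$ summands against $|A|$ $\tau$-orbits, is fine, and so is (c), which is the paper's argument.

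The gap is in the existence part of (a). There you rule out $\tau^{-1}Y\in\gen\tau^{-1}T$ by claiming that splitness of $(\gen\tau^{-1}T,\cogen T)$ kills the almost split sequence $0\to Y\to E\to\tau^{-1}Y\to 0$. That sequence is a nonzero element of $\Ext^1_A(\tau^{-1}Y,Y)$, that is, of $\Ext^1(\text{torsion},\text{torsion-free})$. Splitness of a torsion pair $(\mathcal{A},\mathcal{B})$ only gives $\Ext^1_A(\mathcal{B},\mathcal{A})=0$. The direction you need is false in general: for every non-injective $T(\lambda)$ we have $\tau^{-1}T(\lambda)\in\gen\tau^{-1}T$ and $T(\lambda)\in\cogen T$, yet $\Ext^1_A(\tau^{-1}T(\lambda),T(\lambda))\cong D\End_A(T(\lambda))\neq 0$. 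Your ``dual argument'' for orbits through $\gen\tau^{-1}T$ has the same defect, this time with the pair $(\gen T,\cogen\tau T)$.

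The conclusion is nevertheless true, and the repair is one application of the Auslander--Reiten formula. Since $Y$ is indecomposable and not injective,
\[
\Ext^1_A(\tau^{-1}Y,T)\cong D\Hom_A(T,\tau\tau^{-1}Y)\cong D\Hom_A(T,Y)=0,
\]
because $\cogen\tau T=\{X\colon \Hom_A(T,X)=0\}$ by Proposition~\ref{prop}(a). Hence $\tau^{-1}Y\in\mathcal{F}(\St)=\cogen T$ by Proposition~\ref{characteristictilting}(a). Being nonzero, $\tau^{-1}Y$ is not in the torsion class $\gen\tau^{-1}T$. By your choice of $r$ it is also outside $\cogen\tau T$, so the trichotomy puts it in $\add T$.

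Dually, for $Z\in\gen\tau^{-1}T$ non-projective, $\Ext^1_A(T,\tau Z)\cong D\Hom_A(Z,T)=0$. So $\tau Z\in\mathcal{F}(\Cs)=\gen T$, and hence $\tau Z\notin\cogen\tau T$. With this change your proof is complete and self-contained, using only Proposition~\ref{prop}(a) and the Auslander--Reiten formula in place of the slice theory the paper cites.

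Two minor points on (c). The total $|\gen\tau^{-1}T|+|T|+|\cogen\tau T|$ follows directly from the trichotomy, so the orbit-by-orbit description is not needed. Also, $\Psi$ does not preserve $\add T$ (it sends $\add T$ to $\add DA$), but you only use $|\mathcal{F}(\St)|=|\mathcal{F}(\Cs)|$, so this does not affect the count.
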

\begin{proof}
		Statements (a) and (b) follow from \citep[Section 7]{zbMATH03792379} and \citep[IX, Lemma 1.1]{zbMATH02228448}.
	
	By Proposition \ref{prop}(a), \begin{align*}
		|A\m|=|\mathcal{F}(\Cs)|+|\cogen \tau T|=|\mathcal{F}(\St)|+|\gen \tau^{-1} T|.
	\end{align*} Since the categories $\mathcal{F}(\Cs)$ and $\mathcal{F}(\St)$ are equivalent we get $|\mathcal{F}(\Cs)|=|\mathcal{F}(\St)|$ and thus $|\cogen \tau T|=|\gen \tau^{-1} T|.$ Also every indecomposable object in $\mathcal{F}(\Cs)$ is either in $\gen \tau^{-1}T$ or in $\add T$ by Proposition \ref{prop}(a). So,
	\begin{align*}
		|A\m|=|\mathcal{F}(\Cs)|+|\cogen \tau T|=|\gen \tau^{-1}T|+|T|+|\cogen \tau T|=|A|+2|\cogen \tau T|.
	\end{align*}
\end{proof}

A module over a hereditary algebra satisfying the properties (a) and (b) of Proposition \ref{counting} is called \emph{complete slice} in \citep[Section 7]{zbMATH03792379} (see also \citep[IX, Lemma 1.1]{zbMATH02228448}).

\subsection{Proof of the main result}\label{Proof of the main result}

The characteristic tilting module being a complete slice gives us information on the size of the $\tau$-orbits of the Auslander-Reiten quiver of the Ringel self-dual algebra. Let $\l\in \L$ and let $l_\lambda$ be the size of the $\tau$-orbit of $P(\lambda)$. Since $T$ is a complete slice, each $\tau$-orbit is of the form $\{P(\l), \ldots, T_\lambda, \ldots, I_\lambda\}$, where $I_\lambda$ is injective and $T_\lambda$ is an indecomposable direct summand of the characteristic tilting module.
As we will see next, this fact together with Corollary \ref{closedundertau} help us to locate the characteristic tilting module in the Auslander-Reiten quiver of a Ringel self-dual hereditary algebra of finite representation type.

\begin{Lemma}\label{QuiveraddT}
	Let $k$ be an algebraically closed field and let $Q$ be a connected quiver whose underlying graph is a Dynkin diagram of type $A$, $D$ or $E$ and vertex set $Q_0=\{1, \ldots, n\}$ and fix $A=kQ$. Assume that $(A, Q_0, \leq)$ is Ringel self-dual for a partial order $\leq$ on $Q_0$ and let $T$ be the basic characteristic tilting module of $(A, Q_0, \leq)$. 
Then the following assertions hold.
	\begin{enumerate}[(a)]
		\item The Gabriel quiver of $\End_A(T)^{op}\cong A$ is obtained by following the arrows between the direct summands of $T$ in the Auslander-Reiten quiver of $A$.
		\item Let $i\in Q_0$, $p_i, q_i\in \mathbb{N}_0$ such that $\tau^{-p_i}P(i)\in \add T$ and $\tau^{-p_i-q_i}P(i)$ is a non-zero injective module. Then there exists a graph automorphism $\sigma$ of the underlying Dynkin diagram of $Q$ with the following properties:
		\begin{enumerate}[(i)]
			\item For every $i\in Q_0$, $p_i=q_{\sigma(i)}$.
			\item If $\sigma^2=\id$ and $\sigma$  preserves the orientation of $i\rightarrow j\in Q_1$, then $p_i=p_j$.
			\item If $\sigma^2=\id$ and $\sigma$  reverses the orientation of $i\rightarrow j\in Q_1$, then $p_j=p_i+1$.
			\item If $\sigma^2=\id$, then all $\tau$-orbits have the same length.
		\end{enumerate}

	\end{enumerate}
\end{Lemma}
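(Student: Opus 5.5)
Part (a) rests on Proposition \ref{counting}: $T$ is a complete slice. I will show that irreducible maps between summands of $T$ in $\add T$ are exactly the arrows of $\Gamma(A\m)$ between those summands, and then identify the Gabriel quiver of $B=\End_A(T)^{op}$.

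Since $\add T=\mathcal{F}(\St)\cap\mathcal{F}(\Cs)$, Corollary \ref{closedundertau}(b) gives the following. A map between summands of $T$ that is irreducible in $\add T$ is irreducible in $\mathcal{F}(\St)$, because by the path-closure property of Proposition \ref{counting}(b) any factorisation in $\mathcal{F}(\St)$ through indecomposables passes through $\add T$. Hence it is irreducible in $A\m$. The converse follows from Lemma \ref{irreduciblefullsubcategories}.

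Next I use that $\Hom_A(T,-)$ sends $\add T$ to $B\proj$, so the arrows of the Gabriel quiver of $B$ are read off from $\operatorname{Irr}$ in $\add T$, with the orientation reversed. Combined with $\Gamma(B\proj)=Q_B^{op}$, this gives the quiver of $B$ as the full subquiver of $\Gamma(A\m)$ on the summands of $T$. Since $B\cong A$ by self-duality, that subquiver is isomorphic to the quiver of $A$, possibly up to reversal; I will fix the convention accordingly.

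For (b), I first take $\sigma$ from Lemma \ref{sizetauorbitsADE}, so that $l_i+l_{\sigma(i)}=h_Q$ and $\tau^{-(l_i-1)}P(i)\cong I(\sigma(i))$. Then $p_i+q_i=l_i-1$. The key is a second relation forced by self-duality. The exact equivalence $\Psi\colon\mathcal{F}(\St)\to\mathcal{F}(\Cs)$ sends projectives to $\add T$ and $\add T$ to injectives. By Corollary \ref{closedundertau}(a),(c), AR-sequences in $\mathcal{F}(\St)$ and in $\mathcal{F}(\Cs)$ are the absolute ones, and the exact equivalence preserves them. Hence $\Psi$ commutes with $\tau$, and the AR quiver of $\mathcal{F}(\St)$, consisting of the modules $\tau^{r}T(\l)$ back to the projectives, is isomorphic to that of $\mathcal{F}(\Cs)$.

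Now $\Psi$ sends the $\tau$-orbit piece of length $p_i+1$ (from $P(i)$ to $T_i$) to a piece from some $T_j$ to $I_j$ of length $q_j+1$. This defines a bijection $\sigma'$ with $p_i=q_{\sigma'(i)}$. The map $\sigma'$ is a graph automorphism because $\Psi$ preserves arrows among projectives, which form $Q^{op}$, sending them onto arrows among injectives, which again form the quiver $Q$ up to reversal; this uses Lemma \ref{irreducibleprojectives} and its dual. This $\sigma'$ is the automorphism I will use in (i). Reconciling it with the $\sigma$ of Lemma \ref{sizetauorbitsADE} is not needed for (i) itself.

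For (ii) and (iii) I work inside the slice. An arrow $i\to j$ in $Q$ gives an irreducible map $P(j)\to P(i)$, so along the slice $T$ the corresponding summands are joined by an arrow, and which direction that arrow points determines $p_j-p_i\in\{0,1\}$ via the mesh structure. Comparing the orientation at the projective end with the orientation at the injective end through $\sigma'$, using $p_i=q_{\sigma'(i)}$ and $\sigma'^2=\id$, fixes this difference: it is $0$ when $\sigma'$ preserves the orientation and $1$ when it reverses it.

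For (iv), $\sigma'^2=\id$ gives $l_i-1=p_i+q_i=p_i+p_{\sigma'(i)}$, which is symmetric under $\sigma'$. Together with (ii) and (iii), adjacent vertices then have equal $l$, and connectedness gives equal orbit lengths. The main obstacle I expect is bookkeeping the orientation conventions in (ii) and (iii), and the correct comparison of $\sigma'$ with orbit lengths; I would first check these on the $A_5$ example before writing the general argument.
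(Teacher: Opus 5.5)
Your arguments for (a), for $p_i=q_{\sigma'(i)}$ (transporting Auslander--Reiten sequences through $\Psi$ to get $\Psi\tau^{-r}P(i)\cong\tau^{-r}T_{\sigma'(i)}$), and for (iv) from (ii) and (iii) are sound and agree with the paper. The gap is in showing that $\sigma'$ is a graph automorphism.

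You say that $\Psi$ sends the arrows among projectives onto arrows among injectives. But, as you note yourself just before, $\Psi$ sends $A\proj$ onto $\add T$ via $P(a)\mapsto T_{\sigma'(a)}$; only $\add T$ goes to the injectives. What $\Psi$ actually gives is this: an irreducible map between $P(i)$ and $P(j)$, which is irreducible in $\mathcal{F}(\St)$ by Lemma~\ref{irreducibleprojectives}, becomes an irreducible map between $T_{\sigma'(i)}$ and $T_{\sigma'(j)}$, which is irreducible in $A\m$ by Corollary~\ref{closedundertau}(d).

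To conclude that $\{\sigma'(i),\sigma'(j)\}$ is an edge of $Q$, you need to identify the slice with $Q$ through the $\tau$-orbits. Concretely, an arrow $i\to j$ of $Q$ forces $T_i$ and $T_j$ to be adjacent in $\Gamma(A\m)$. The arrow is $T_j\to T_i$ exactly when $p_i=p_j$, and $T_i\to T_j$ exactly when $p_j=p_i+1$; this follows from the mesh structure and the slice property, and the two cases are exclusive. Conversely, arrows of $\Gamma(A\m)$ only join $\tau$-orbits of vertices adjacent in $Q$. You invoke the first half of this only later, and never for the automorphism step.

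The same misplacement undermines (ii) and (iii). Comparing orientations at the projective and injective ends, together with $p_a=q_{\sigma'(a)}$ and $\sigma'^2=\id$, yields only numerical constraints. In the reversing case these are $p_j-p_i\in\{0,1\}$, $q_i-q_j\in\{0,1\}$ (from the arrow $\sigma'(j)\to\sigma'(i)$), and $l_j-l_i=(p_j-p_i)-(q_i-q_j)$. When $l_i=l_j$, both $p_j-p_i=0$ and $p_j-p_i=1$ are consistent with all of them. You cannot rule this out by assuming $l_i\neq l_j$ or $l_i=l_j$, since (iv) is deduced from (ii) and (iii).

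The missing idea is to push a specific arrow through $\Psi$. If $\sigma'(i)\to\sigma'(j)\in Q_1$, the irreducible map $P(\sigma'(j))\to P(\sigma'(i))$ is sent to an irreducible map $T_{\sigma'^2(j)}\to T_{\sigma'^2(i)}$, that is, $T_j\to T_i$ once $\sigma'^2=\id$. By the dichotomy above this forces $p_i=p_j$. If instead $\sigma'(j)\to\sigma'(i)$, you get $T_i\to T_j$ and hence $p_j=p_i+1$. So the comparison that works is between the projective end and the slice, not the injective end. This is also exactly where $\sigma'^2=\id$ is used, and it is the same mechanism that proves $\sigma'$ is a graph automorphism.
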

\begin{proof}
(a) is a direct consequence of $T$ being a slice module and so the irreducible maps (resp. the composition of irreducible maps) are the same in the additive categories $\add T$ and $A\m$, see for instance \citep[VIII, Theorem 3.5]{zbMATH02228448}).

Let $T_i$ be the indecomposable $\tau^{-p_i}(P(i)).$ By Corollary \ref{closedundertau}, $\mathcal{F}(\St)$ is closed under $\tau$ while $\mathcal{F}(\Cs)$ is closed under $\tau^{-1}$. Thus since $T_i$ is an injective object in $\mathcal{F}(\St)$ we get that $T_i, \tau T_i, \ldots, \tau^{p_i}T_i=P(i)$ are the modules that belong to the same $\tau$-orbit of $T_i$ while being in $\mathcal{F}(\St)$ and its length is $p_i+1$. Similarly, the objects in the $\tau$-orbit of $T_i$ that lie in $\mathcal{F}(\Cs)$ are $T_i, \tau^{-1} T_i, \ldots, \tau^{-q_i}T_i$ which has length $q_i+1$. Let $\Psi$ be an exact equivalence $\Psi\colon \mathcal{F}(\St)\rightarrow \mathcal{F}(\Cs)$. Thus, it sends projective objects in $\mathcal{F}(\St)$ to projective objects in $\mathcal{F}(\Cs)$, injective objects in $\mathcal{F}(\St)$ to injective objects in $\mathcal{F}(\Cs)$.
 Thus, there exists a permutation $\sigma\in S_n$ such that $\Psi$ sends $P(i)$ to $T_{\sigma(i)}$. 
 
 \begin{claim}
 	For every $i\in Q_0$, $\Psi T_i\cong \tau^{-p_i} T_{\sigma(i)}$. \label{claim1}
 \end{claim}
\begin{claimproof}
Let $i\in Q_0$. Assume first that $p_i=0$. Then $\Psi T_i\cong \Psi P(i)\cong T_{\sigma(i)}$. Suppose now that $p_i>0$. Suppose now that $p_i>0$.
Let $X\in \mathcal{F}(\St)$ and not projective. By Corollary \ref{closedundertau}, the Auslander-Reiten sequence $0\rightarrow \tau X\rightarrow E\rightarrow X\rightarrow 0$ lies in $\mathcal{F}(\St)$. Applying $\Psi$ we get the Auslander-Reiten sequence $0\rightarrow \Psi \tau X\rightarrow \Psi E\rightarrow \Psi X\rightarrow 0$ in $\mathcal{F}(\Cs)$. It is also an Auslander-Reiten sequence in $A\m$ thus $\Psi X\cong \tau^{-1} \Psi \tau X$. In particular, we can apply this successively to $X\in \{T_i, \ldots, \tau^{p_i-1} T_i\}$.
By induction on $r\in \{0, \ldots, p_i\}$ we get that $\Psi T_i\cong \tau^{-r}\Psi \tau^r T_i$. Indeed for $r<p_i$, the module $\tau^r T_i$ is not projective, so $\Psi \tau^r T_i\cong \tau^{-1}\Psi \tau^{r+1} T_i$ and $\Psi T_i\cong \tau^{-r}\Psi \tau^r T_i \cong \tau^{-r} \tau^{-1}\Psi \tau^{r+1} T_i$.  Taking $r=p_i$, we obtain 
\[  \Psi T_i\cong \tau^{-p_i}\Psi \tau^{p_i} T_i\cong \tau^{-p_i} \Psi P(i)\cong \tau^{-p_i} T_{\sigma(i)}. \qedhere \]
\end{claimproof}

\begin{claim}
	For every $i\in Q_0$, $p_i=q_{\sigma(i)}$ and $\Psi T_i\cong I_{\sigma(i)}$.
\end{claim}
\begin{claimproof}
	Let $i\in Q_0$. Since $T_i$ is an injective object in $\mathcal{F}(\St)$, $\Psi T_i$ is an injective $A$-module. By Claim \ref{claim1}, $\tau^{-p_i} T_{\sigma(i)}$ is injective, and by definition the only injective in the $\tau$-orbit of $T_{\sigma(i)}$ is $I_{\sigma(i)}$. Thus $\Psi T_i\cong I_{\sigma(i)}$. Also $q_i$ is, by definition, the unique integer such that $\tau^{-q_i}T_i$ is a non-zero injective module. Thus, $p_i=q_{\sigma(i)}$. 
\end{claimproof}

\begin{claim}
	Let $i\rightarrow j\in Q_1$ be an arrow. Then $[T_i]$ and $[T_j]$ are adjacent in the underlying graph of $\Gamma(\add T)$. Moreover, $p_j-p_i\in \{0, 1\}$ with $p_i=p_j$ if and only if there exists an irreducible map $T_j\rightarrow T_i$. \label{claim3}
\end{claim}
\begin{claimproof}
	Let $i\rightarrow j\in Q_1$, then there exists an irreducible map $P(j)\rightarrow P(i)$ in $A\m$. Further by considering the irreducible map $\tau^{-p_j}P(j)\rightarrow \tau^{-p_j}P(i)$ and $T$ being a complete slice we obtain that either $\tau^{-p_j}P(i)\in \add T$ or $\tau^{-p_j+1}P(i)\in \add T$ (see for instance \citep[VIII, Lemma 1.4]{zbMATH02228448}). Recall that in the Auslander-Reiten quiver of $A$, the arrows between the $\tau$-orbits of $P(i)$ and $P(j)$ are, whenever the corresponding modules are non-zero,
	of the form $\tau^{-n} P(j)\rightarrow \tau^{-n} P(i)$ or $\tau^{-n+1}P(i)\rightarrow \tau^{-n} P(j)$ for some natural number $n$. So the first case occurs when $T_i=\tau^{-p_j}P(i)$, hence $p_i=p_j$ and there is an irreducible map $T_j\rightarrow T_i$.
	The second case occurs when $T_i=\tau^{-p_j+1}P(i)$, so $p_i=p_j-1$ and there is an irreducible map $T_i\rightarrow T_j$, and these two cases are mutually exclusive by (a).  In both situations $[T_i]$ and $[T_j]$ are adjacent in the underlying graph of $\Gamma(\add T)$.
\end{claimproof}
We can now show that $\sigma$ is a graph automorphism of the underlying graph of $Q$.
By Claim \ref{claim3}, if $i\rightarrow j\in Q_1$, then $[T_i]$ and $[T_j]$ are adjacent in the underlying graph of $\Gamma(\add T)$. Conversely, irreducible morphisms can occur between two modules in $\tau$-orbits of $P(i)$ and $P(j)$ only when $i$ and $j$ are adjacent in the underlying graph of $Q$. Hence, the correspondence $i\mapsto T_i$ identifies the underlying graph of $\Gamma(\add T)$ with the underlying graph of $Q$. By Corollary \ref{closedundertau}, the irreducible morphisms in $\mathcal{F}(\St)$ (resp. $\mathcal{F}(\Cs)$) are the same those in $A\m$ between objects of $\mathcal{F}(\St)$ (resp. $\mathcal{F}(\Cs)$). By Lemma \ref{irreduciblefullsubcategories} and Lemma \ref{irreducibleprojectives} together with the fact that $\Psi$ sends irreducible morphisms of $\mathcal{F}(\St)$ to irreducible morphisms of $\mathcal{F}(\Cs)$ we obtain that $\Psi$ the irreducible morphisms $P(i)\rightarrow P(j)$ in $A\proj$ are sent to irreducible morphisms $T_{\sigma(i)}\rightarrow T_{\sigma(j)}$ in $\add T$. Thus, every edge $\{i, j\}$ of the underlying graph of $Q$ is sent to an edge $\{\sigma(i), \sigma(j)\}$ of the underlying graph of $Q$. So $\sigma$ is a graph automorphism of the underlying graph of $Q$ and (i) holds.

Let $i\rightarrow j\in Q_1$. Assume that $\sigma$ satisfies in addition that $\sigma^2=\id$ and $\sigma$ preserves the orientation of $i\rightarrow j$, that is, $\sigma(i)\rightarrow \sigma(j)\in Q_1$. So, there exists an irreducible map $P(\sigma(j))\rightarrow P(\sigma(i))$ and applying $\Psi$ we get an irreducible map $T_j\rightarrow T_i$. By Claim \ref{claim3}, we get $p_i=p_j$. Hence (ii) holds. 

Assume now that $\sigma(j)\rightarrow \sigma(i)\in Q_1$. Then there exists an irreducible map $P(\sigma(i))\rightarrow P(\sigma(j))$ and applying $\Psi$ we get an irreducible map $T_i\rightarrow T_j$. By Claim \ref{claim3}, $p_j=p_i+1$. So (iii) holds.

 It remains to show (iv). Denote by $l_i$ the number of indecomposable modules in the $\tau$-orbit of $P(i)$. Assume that $\sigma^2=\id$. Hence, $l_i=p_i+q_i+1=p_i+p_{\sigma(i)}+1 $.  If $i\rightarrow j\in Q_1$ and $\sigma(i)\rightarrow \sigma(j)\in Q_1$, then (ii) gives that $p_i=p_j$ and $p_{\sigma(i)}=p_{\sigma(j)}$ because $\sigma$ also preserves the orientation of $\sigma(i)\rightarrow \sigma(j)$. So $l_i=l_j$. Assume that $i\rightarrow j\in Q_1$ and $\sigma(j)\rightarrow \sigma(i)\in Q_1$. Then (iii) yields $p_j=p_i+1$ and $p_{\sigma(i)}=p_{\sigma(j)}+1$. Thus, $l_i=p_i+p_{\sigma(i)}+1=p_j-1+p_{\sigma(j)}+1+1=l_j$. Since $Q$ is connected we get that $l_i=l_j$ for all $i, j\in Q_0$.
\end{proof}

In practice, checking that a quasi-hereditary algebra is Ringel self-dual normally involves computing the Ringel dual and showing that it is Morita equivalent to the original algebra via an equivalence of categories that sends standard modules to standard modules. Presenting endomorphism algebras by quiver and relations can be laborious, especially for algebras with many simple modules. In the following we give a sufficient condition for a quasi-hereditary algebra to be Ringel self-dual which avoids computing the Ringel dual explicitly.

\begin{Theorem}\label{maintheoremoneimplication}
	Let $(A, \Lambda, \leq)$ be a (basic) quasi-hereditary algebra with $\gldim A\leq 1$. 
	\begin{enumerate}[(a)]
		\item  If $T=\tau^{-n}(A)$ for some $n\in \mathbb{N}$, then $\mathcal{F}(\St)=\add \oplus_{k=0}^n \tau^{-k} (A)= \add \oplus_{k=0}^n \tau^k (T)$. 
		\item If $T=\tau^{n}(DA)$ for some $n\in \mathbb{N}$, then $\mathcal{F}(\Cs)=\add \oplus_{k=0}^n \tau^{k} (DA)= \add \oplus_{k=0}^n \tau^{-k} (T)$. 
		\item If $T=\tau^{-n}(A)=\tau^n(DA)$ for some $n\in \mathbb{N}$, then  $(A, \Lambda, \leq)$ is Ringel self-dual and $A$ is of finite representation type.
	\end{enumerate}
\end{Theorem}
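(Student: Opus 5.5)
We prove (a) directly, obtain (b) by duality, and derive (c) by showing that $\tau^{-n}$ itself is an exact equivalence $\mathcal{F}(\St)\to\mathcal{F}(\Cs)$. Throughout we use three hereditary facts.
\begin{enumerate}[(i)]
\item If $X,Y$ have no injective summands, then $\Hom_A(X,Y)\cong\Hom_A(\tau^{-1}X,\tau^{-1}Y)$, and dually for $\tau$ on modules without projective summands.
\item An indecomposable non-projective $X$ satisfies $\Hom_A(X,A)=0$: the image of a nonzero map $X\to P$ is projective, so the map would split off a projective summand of $X$.
\item By Theorem \ref{separation}(b) and Proposition \ref{characteristictilting}(a), $\mathcal{F}(\St)=\{X\colon \Ext^1_A(X,T)=0\}=\cogen T$ is a torsion-free class, hence closed under submodules.
\end{enumerate}
Since $T=\tau^{-n}(A)$ is basic with $|\L|$ summands, every $\tau^{-n}P(i)$ is nonzero. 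Hence no $\tau^{-k}(A)$ with $k<n$ has an injective summand.

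\emph{Inclusion $\supseteq$ in (a).} For $0\le k\le n$ and $P=P(i)$ with $\tau^{-k}P\neq 0$, the Auslander–Reiten formula gives
\[\Ext^1_A(\tau^{-k}P,T)\cong D\Hom_A(\tau^{-n}A,\tau^{-k+1}P)\quad (k\ge 1).\]
Applying $\tau$ a total of $k-1$ times and using (i), this is isomorphic to $D\Hom_A(\tau^{-(n-k+1)}A,P)$. That group vanishes by (ii), since $n-k+1\ge 1$ means $\tau^{-(n-k+1)}A$ has no projective summands. For $k=0$ the vanishing is trivial. So each such $\tau^{-k}(A)$ lies in $\mathcal{F}(\St)$.

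\emph{Inclusion $\subseteq$ in (a).} Let $X\in\mathcal{F}(\St)=\cogen T$ be indecomposable, so $\Hom_A(X,\tau^{-n}P(i))\ne0$ for some $i$. If $X$ is not projective, (i) gives $\Hom_A(\tau X,\tau^{-n+1}P(i))\ne0$. We iterate this. If $\tau^{j}X$ is non-projective for all $j\le n$, we reach $\Hom_A(\tau^{n}X,P(i))\ne0$ with $\tau^nX$ indecomposable and non-projective, contradicting (ii). Hence $\tau^jX$ is projective for some $j\le n$, i.e. $X\cong\tau^{-j}P$ with $j\le n$. The equality $\add\oplus_{k=0}^n\tau^{-k}(A)=\add\oplus_{k=0}^n\tau^k(T)$ is then just reindexing.

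Part (b) follows by applying (a) to $A^{op}$ with $DT$ and the duality $D$; alternatively one can run the argument above verbatim with $\tau^{-1}$ and injectives.

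\emph{Finite type in (c).} By (a) and (b), $\mathcal{F}(\St)$ and $\mathcal{F}(\Cs)$ have finitely many indecomposables. For $n\ge1$ we have $\tau T=\tau^{-(n-1)}A\in\mathcal{F}(\St)$, and $\mathcal{F}(\St)$ is closed under submodules, so $\cogen\tau T\subset\mathcal{F}(\St)$. The Brenner–Butler torsion pair $(\mathcal{F}(\Cs),\cogen\tau T)$ then places every indecomposable in $\mathcal{F}(\Cs)\cup\mathcal{F}(\St)$. Hence $A$ is of finite representation type.

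\emph{The equivalence in (c).} We take $\Psi=\tau^{-n}$ restricted to $\mathcal{F}(\St)=\add\oplus_{k\le n}\tau^{-k}A$. It sends $\tau^{-k}A$ to $\tau^{-n-k}A=\tau^{-k}T=\tau^{n-k}DA$, so it is essentially surjective onto $\mathcal{F}(\Cs)$ by (b). All modules $\tau^{-j}A$ with $j<2n$ have no injective summands: the only injectives in these orbits occur at $\tau^{-2n}A=DA$. Therefore (i) makes $\Psi$ fully faithful.

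\emph{Exactness, the main obstacle.} Write $\tau^{-1}=\Ext^1_{A^{op}}(D-,A)$. For a conflation $0\to X\to Y\to Z\to0$, apply $D$ and then the long exact $\Hom_{A^{op}}(-,A)$–$\Ext^1_{A^{op}}(-,A)$ sequence. The only obstruction to left exactness is the term $\Hom_{A^{op}}(DZ,A)$, which vanishes when $Z$ has no injective summand. Iterating, $\tau^{-1}$ preserves the conflations of $\mathcal{F}(\St)$ and of the successive images, since none of the relevant terms acquires injective summands before degree $2n$. I expect this bookkeeping to be the delicate point. Once it is done, $\Psi$ is an exact equivalence $\mathcal{F}(\St)\to\mathcal{F}(\Cs)$, and Ringel self-duality follows from the criterion stated after the definition of Ringel self-duality.
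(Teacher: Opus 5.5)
Your proof is correct. Its skeleton is the paper's: characterise $\mathcal{F}(\St)$ by $\Ext^1_A(-,T)=0$, dualise for (b), and show that a power of $\tau$ is an additive equivalence that is exact because a connecting map vanishes. Several steps are done differently, though.

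In (a), the paper proves both inclusions at once via the chain $D\Ext^1_A(X,T)\cong\Hom_A(\tau^{-n-1}A,X)\cong\Hom_A(\tau^{-n}A,\tau X)\cong\cdots\cong\Hom_A(A,\tau^{n+1}X)$. This uses the adjunction $\Hom_A(\tau^{-1}M,N)\cong\Hom_A(M,\tau N)$, which holds for all modules over a hereditary algebra. It identifies $\mathcal{F}(\St)$ with $\{X\colon \tau^{n+1}X=0\}$ without tracking projective summands. Your route through the stable isomorphisms (i) and $\mathcal{F}(\St)=\cogen T$ does need that tracking, and you carry it out correctly.

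In (c), you work with the mirror image of the paper's functor. The paper shows $\tau^n$ is exact on $\mathcal{F}(\Cs)$, using $\tau=D\Ext^1_A(-,A)$ and $\Hom_A(\tau^iX,A)=0$. You show $\tau^{-n}$ is exact on $\mathcal{F}(\St)$, using $\tau^{-1}=\Ext^1_{A^{op}}(D-,A)$. The bookkeeping you call delicate is already complete: for $s\le n-1$, $\tau^{-s}Z$ lies in $\add\bigoplus_{j\le 2n-1}\tau^{-j}A$, which has no injective summands. Strictly, an exact equivalence also needs an exact quasi-inverse. That is the dual computation, which is precisely the paper's, so the two arguments are the two halves of one check. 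You also make full faithfulness explicit via (i), which the paper leaves implicit.

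For finite type, the paper appeals to Proposition \ref{prop}, which presupposes Ringel self-duality and is stated for connected $A$. Your direct argument from $\tau T=\tau^{-(n-1)}A\in\mathcal{F}(\St)$ and closure under submodules avoids both. You should add one line, though: the torsion pair $(\mathcal{F}(\Cs),\cogen\tau T)$ splits because $\Ext^1_A(\cogen\tau T,\mathcal{F}(\Cs))\subseteq\Ext^1_A(\mathcal{F}(\St),\mathcal{F}(\Cs))=0$. The torsion sequence alone does not place each indecomposable in one of the two classes.
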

\begin{proof}
	Assume that  $T=\tau^{-n}(A)$ for some $n\in \mathbb{N}$.  Let $X\in A\m$ be a module satisfying $\Ext_A^1(X, T)=0$. 
	Then since $\gldim A\leq 1$ the Auslander-Reiten formula yields the following $k$-linear isomorphisms
	\begin{align*}
		0&=D\Ext_A^1(X, T)\cong \Hom_A(\tau^{-1}T, X)=\Hom_A(\tau^{-n-1}(A), X)\cong \Hom_A(\tau^{-n}(A), \tau X)\\&\cong \Hom_A(A, \tau^{n+1}(X))\cong \tau^{n+1}(X).
	\end{align*}
	Since $\tau^{n+1}(X)$ is zero if and only if $\tau^j X$ is projective for some $j\leq n$ (see for instance \citep[IV, 2.10]{zbMATH02228448}) it follows that $\mathcal{F}(\St)=\add \oplus_{k=0}^n \tau^{-k}(A)$. In particular, $\add \oplus_{k=0}^n \tau^{-k}(A)=\add \oplus_{k=0}^n \tau^k(T)$. So (a) holds.
	
		 (b) is the dual statement of (a), and so it is analogous. It also follows from (a) using the fact that $DT$ is the characteristic tilting module of $(A^{op}, \L, \leq)$ and $D\circ \tau^k=\tau^{-k} \circ D$.
	
	Assume now that $T=\tau^{-n}(A)=\tau^n(DA)$ for some $n\in \mathbb{N}$.
	Since $T$ is a tilting module it has $|A|$ non-isomorphic indecomposable direct summands and so $\tau^{-n}(P)\neq 0$ for all indecomposable projective modules and $\tau^n(I)\neq 0$ for all indecomposable injective modules. Thus, $\tau^{k}(T)$ has $|A|$ non-isomorphic indecomposable direct summands for every $k=-n, \ldots, n$. In particular, only the objects $\tau^n(T)$ are the projective modules.
So, the functor $\Psi\colon \add \oplus_{k=0}^n \tau^{-k} (T)\rightarrow \add \oplus_{k=0}^n \tau^k (T)$, induced by $X\mapsto \tau^n X$, is an equivalence of additive categories. By (a) and (b), $\Psi\colon \mathcal{F}(\Cs)\rightarrow \mathcal{F}(\St)$ is an equivalence of additive categories. It remains to show that $\Psi$ is an exact functor.
	
To do this, we need to show that 
\begin{align}
	\Hom_A(\tau^i X, A)=0, \quad \forall X\in \mathcal{F}(\Cs) \quad \forall i=0, \ldots, n-1. \label{eq1}
\end{align} Let $i\in \{0, \ldots, n-1\}$ and $k\in \{0, \ldots, n\}$. So $i-(n+k)$ is a negative integer. Then from the Auslander-Reiten formulas we obtain
$\Hom_A(\tau^{i-(n+k)}(A), A)\cong \Hom_A(\tau^{i-(n+k)+1}(A), \tau (A))=0$ (see for example \citep[IV, Corollaries 2.14, 2.15]{zbMATH02228448}). Thus
\[\Hom_A(\tau^i\oplus_{k=0}^n \tau^{-k}(T), A)\cong \Hom_A(\oplus_{k=0}^n \tau^{i-(n+k)}(A), A)=0.\]
Then, by (b), the equality \eqref{eq1} holds.

	Let $0\rightarrow X\rightarrow Y \rightarrow Z\rightarrow 0$ be an exact sequence on $\mathcal{F}(\Cs)$.
	We show by induction that the sequence $0\rightarrow \tau^k (X)\rightarrow \tau^k (Y)\rightarrow \tau^k(Z)\rightarrow 0$ is exact for every $k=0, \ldots, n$.
	For $k=0$, there is nothing to show. Assume that $0\rightarrow \tau^k (X)\rightarrow \tau^k (Y)\rightarrow \tau^k(Z)\rightarrow 0$ is exact for some $k<n$. 
	 Applying $\Hom_A(-, A)$ on it we get the exact sequence
	\begin{align*}
		\Hom_A(\tau^k(X), A)\rightarrow \Ext_A^1(\tau^k(Z), A)\rightarrow \Ext_A^1(\tau^k(Y), A)\rightarrow \Ext_A^1(\tau^k(X), A)\rightarrow 0.
	\end{align*}  By \eqref{eq1}, $\Hom_A(\tau^k(X), A)=0$.
Since $\tau=D\circ \Ext_A^1(-, A)$ we get that $0\rightarrow \tau^{k+1} (X)\rightarrow \tau^{k+1}(Y)\rightarrow \tau^{k+1}(Z)\rightarrow 0$ is an exact sequence. Therefore $\tau^n$ is exact on $\mathcal{F}(\Cs)$ and so $\Psi$ is an exact equivalence. Therefore $(A, \Lambda, \leq)$ is Ringel self-dual.
Since by (a) and (b) both $\mathcal{F}(\St)$ and $\mathcal{F}(\Cs)$ are of finite-type, then by Proposition \ref{prop} $A$ is of finite representation-type.
\end{proof}

\begin{proof}[Proof of Theorem \ref{maintheorem}]
	It follows by combining Lemma \ref{QuiveraddT} with Theorem \ref{maintheoremoneimplication}. The parity statement follows from Proposition \ref{counting}(c).
\end{proof}

\begin{Remark}
	The case $n=0$ in Theorem \ref{maintheoremoneimplication} is not treated because it is the semi-simple case. Indeed, if $T=A=DA$, then $\gldim A=\injdim A=0$. 
\end{Remark}

\section{Applications and classification}\label{Applications and classification}

In this section, we apply Theorem \ref{maintheorem} to classify the Dynkin diagrams for which there exists an orientation whose path algebra (over an algebraically closed field) admits a Ringel self-dual quasi-hereditary structure. We show that these are precisely the Dynkin diagrams of type $D_{2m}$ and $A_{4m+1}$. 
In particular, the smallest example of Ringel self-dual hereditary algebra of finite representation type is $kD_4$ where $D_4$ is equipped with the linear orientation for a certain ordering of the simples.

More precisely, the aim of this section is to prove the following classification.

\begin{Theorem} Let $\Gamma$ be Dynkin diagram of type $A$, $D$ or $E$. \label{maintheoremBproof}
	There exists a quiver $Q$ with underlying graph $\Gamma$ and a partial order $\leq$ on $Q_0$ making the corresponding quasi-hereditary algebra $(kQ, Q_0, \leq)$ Ringel self-dual if and only if $\Gamma$ is of type $D_{2m}$ or $A_{4t+1}$ for integers $m\geq 2$ and $t\geq 0$.
\end{Theorem}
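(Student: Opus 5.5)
The proof splits into sufficiency (constructing examples) and necessity (excluding the other diagrams). Both directions rest on Theorem \ref{maintheorem}, Lemma \ref{QuiveraddT} and Lemma \ref{sizetauorbitsADE}.

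\emph{Sufficiency.} I will use Theorem \ref{maintheoremoneimplication}(c). For $D_{2m}$ I take the linear orientation. For $A_{4t+1}$ I take an orientation that is symmetric under the flip $i\mapsto n-i+1$; by Lemma \ref{QuiveraddT}(ii),(iii) it must be compatible with the flip (for instance, arrows pointing towards the middle vertex).

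In each case I will show $\tau^{-n}(A)\cong \tau^{n}(DA)$, where $n=h_Q/2-1$, using Lemma \ref{sizetauorbitsADE}:
\begin{itemize}
\item For $D_{2m}$, $\sigma$ is the identity and $l_i=h_Q/2=2m-1$.
\item For $A_{4t+1}$, $h_Q=4t+2$, and the flip-symmetric orientation should force all orbits to have length $2t+1$.
\end{itemize}
I will then exhibit a total order on $Q_0$ whose characteristic tilting module is exactly $\tau^{-n}(A)$. To check this I compute the standard modules $\St(\l)$ and verify that each $\tau^{-n}P(\l)$ lies in $\mathcal{F}(\St)\cap\mathcal{F}(\Cs)$, using Proposition \ref{characteristictilting}(a),(b) and the Auslander--Reiten formula. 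In practice this means checking $\Ext^1(\St(\mu),\tau^{-n}A)=0$ and $\Ext^1(\tau^{-n}A,\Cs(\mu))=0$ via $\Hom(-,\tau -)$ and dimension vectors computed with the Coxeter matrix $\Phi$. Since $T$ is basic with $|A|$ summands, this identifies $T$.

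\emph{Necessity, parity step.} By Theorem \ref{maintheorem}(1) (Proposition \ref{counting}(c)), the number of indecomposables has the same parity as $|Q_0|$.
\begin{itemize}
\item For $A_n$, $n(n+1)/2\equiv n \pmod 2$ forces $n\equiv 0,1 \pmod 4$.
\item For $D_n$, $n(n-1)$ is even, so $n$ must be even.
\item For $E_6,E_7,E_8$ parity gives nothing, since $36,63,120$ have the same parity as $6,7,8$.
\end{itemize}

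\emph{Necessity, remaining cases.} It remains to exclude $A_{4t}$ and the $E$ types.
\begin{itemize}
\item For $A_{4t}$, Lemma \ref{QuiveraddT} gives a diagram automorphism $\sigma$ with $\sigma^2=\id$. Then Lemma \ref{QuiveraddT}(iv) forces all $\tau$-orbits to have the same length. But $l_i+l_{n-i+1}=4t+1$ is odd, so $l_i\neq l_{n-i+1}$ for the flip pairs, a contradiction.
\item For $E_6$, the same argument applies to any $\sigma$: $h_Q=12$, and combining $l_i+l_{\sigma'(i)}=12$ with the relation $l_i=p_i+p_{\sigma(i)}+1$ should give a contradiction after a short case check on the two possible $\sigma$.
\item For $E_7$ and $E_8$, only $\sigma=\id$ is possible, so by (ii) all $p_i$ are equal and $T\cong\tau^{-p}(A)\cong\tau^{p}(DA)$, with $p=4$ for $E_7$ and $p=7$ for $E_8$. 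The orbit counting is consistent here, so a genuinely quasi-hereditary obstruction is needed.
\end{itemize}
For $E_7$ and $E_8$ my plan is as follows. By Theorem \ref{maintheoremoneimplication}(a), $\mathcal{F}(\St)$ is the union of the first $p+1$ columns of the Auslander--Reiten quiver. Its relative simple objects, namely the standard modules (the indecomposables of $\mathcal{F}(\St)$ admitting no proper $\St$-filtration), must number exactly $|Q_0|$. Moreover each must be the simple-topped quotient $\St(\l)$ of $P(\l)$ with $\End=k$, and by Proposition \ref{prop}(b) the simples in $\operatorname{soc}T$ must be standard. I will check, for every orientation (up to the reduction given by reflection functors), that these conditions fail. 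For example, the standard modules forced by Proposition \ref{prop}(b) together with $\St(\l)\twoheadleftarrow P(\l)$ cannot filter every module in the first $p+1$ columns.

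\emph{Main obstacle.} I expect this $E_7$/$E_8$ exclusion, and correspondingly the explicit choice of order in the sufficiency direction, to be the hardest step, since it requires control over which tilting modules of the form $\tau^{-p}A$ are characteristic. I would first try to prove a general criterion: $\tau^{-p}A$ is characteristic for some order only if the orientation has a source/sink pattern compatible with $\sigma$. I would then verify that criterion by explicit dimension-vector computations.
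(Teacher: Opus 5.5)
Your parity exclusions and the exclusion of $A_{4t}$ via equal orbit lengths match the paper. The $E_6$ argument also matches, but make it explicit: vertex $6$ is fixed by every automorphism, so $l_6=6$ by Lemma~\ref{sizetauorbitsADE}, while $l_6=2p_6+1$ by Lemma~\ref{QuiveraddT}.

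The genuine gap is $E_7$/$E_8$. You correctly reduce to $T\cong\tau^{-4}(kQ)$, resp.\ $\tau^{-7}(kQ)$, but then only announce an orientation-by-orientation check. The suggested reduction ``up to reflection functors'' does not work:
\begin{itemize}
\item Reflecting at a sink $i$ (an APR tilt) shifts the $\tau$-orbit of $i$ by one step relative to the others, so the central slice of $Q$ is not carried to that of $\sigma_iQ$.
\item It does not preserve simplicity: $S(j)$ with $j\to i$ goes to a module with dimension vector $e_i+e_j$.
\item Nothing guarantees that it transports quasi-hereditary structures.
\end{itemize}
What is missing is an orientation-free obstruction. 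For $\l$ minimal in $\L$ one has $\St(\l)=S(\l)$ and $X(\l)=0$, so $T(\l)=S(\l)$. Hence $T$ always has a simple summand, and it suffices to show that the central slice contains no simple module. That is, $m_Q(j):=\min\{r\geq 0\colon \tau^rS(j)\text{ is projective}\}$ must never equal $4$ (resp.\ $7$). The paper proves this for all orientations at once:
\begin{itemize}
\item Sinks give $m_Q(j)=0$ and sources give $8$ (resp.\ $14$).
\item Suppose $j\to b_1$ is the only arrow starting at $j$, and the component of $b_1$ in $Q\setminus\{j\}$ is a path $\mathcal{B}$ with end point $b_1$ and vertices $b_1,\dots,b_l$ in order. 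Then $\tau S(j)\cong I_{\mathcal{B}}(b_1)$ and $m_Q(j)=1+|\{s\colon b_s\to b_{s+1}\in Q_1\}|$ (Lemma~\ref{reductionfortypeE}).
\item This is at most $2$ (resp.\ $3$) at a bivalent vertex whose outgoing arrow points away from the branch vertex, and at most $3$ (resp.\ $4$) at the branch vertex.
\item All other cases follow from $m_Q(j)+m_{Q^{op}}(j)=8$ (resp.\ $14$) (Lemma~\ref{reductiontoOp}).
\end{itemize}

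The existence half is also only announced. Its whole content is to produce a partial order for which $\tau^{-n}(kQ)$ is the characteristic tilting module, and you never give one. The paper uses two explicit constructions:
\begin{itemize}
\item the orientation $1\to\cdots\to 2t+1\leftarrow\cdots\leftarrow 4t+1$, with an explicit order making $\St(i)$ simple for $t+1\leq i\leq 3t+1$ and projective otherwise;
\item the linearly oriented $D_{2m}$, with $2m\succ\cdots\succ m+2\succ\{1,2\}\succ 3\succ\cdots\succ m+1$.
\end{itemize}
It then verifies $T\cong\tau^{-t}(R)$, resp.\ $\tau^{-(m-1)}(R)$, via Coxeter-matrix computations and Lemma~\ref{taumeasure}.

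Your exponent is also off by a factor of two. If every orbit has length $h_Q/2$, then $\tau^{-(h_Q/2-1)}(A)\cong DA$ and $\tau^{h_Q/2-1}(DA)\cong A$, and these are not isomorphic. The central slice is at $n=(h_Q-2)/4$, i.e.\ $n=t$ for $A_{4t+1}$ and $n=m-1$ for $D_{2m}$. This is consistent with your $p=4,7$ for $E_7,E_8$.

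Your observation that a flip-invariant orientation of $A_{4t+1}$ has all orbits of length $2t+1$ is correct and shorter than the paper's Coxeter computation: the flip is a quiver automorphism, so $l_i=l_{n+1-i}$, and $l_i+l_{n+1-i}=4t+2$. But it only yields $\tau^{-t}(A)\cong\tau^{t}(DA)$, not that this module is $T$ for some order.
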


\subsection{Examples of hereditary algebras that are not Ringel self-dual}\label{Examples of hereditary algebras that are not Ringel self-dual}

We start by considering the Dynkin diagrams for which Ringel self-duality is impossible. Type $D$ is the easiest case, since the required symmetry can already be ruled out by counting the total number of indecomposable
modules. Type $A$ is also settled by a counting argument but requires more refined information, namely the number of indecomposable modules in each $\tau$-orbit. Type $E$ requires further arguments, especially in types $E_7$ and $E_8$, where simple counting arguments are no longer sufficient. In these cases, our strategy is to show that the complete slice lying in the center of the Auslander-Reiten quiver cannot contain a simple module as direct summand.

\subsubsection{Type $D$}

We now show that path algebras of type $D_{2m+1}$ do not admit any Ringel self-dual quasi-hereditary structure, regardless of the orientation.

\begin{Cor}\label{corD4}
	If $n$ is odd, then $kD_n$ is not Ringel self-dual for any choice of orientation and partial ordering on the simples.
\end{Cor}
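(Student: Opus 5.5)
The plan is to argue by parity, using Proposition \ref{counting}(c). Suppose, for a contradiction, that for some orientation $Q$ of $D_n$ with $n$ odd and some partial order $\leq$ on $Q_0$, the quasi-hereditary algebra $(kQ, Q_0, \leq)$ is Ringel self-dual.

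First, $A=kQ$ is connected, hereditary and of finite representation type by Gabriel's theorem. Hence Proposition \ref{counting} applies to it. Part (c) gives
\[|A\m| = |A| + 2|\cogen \tau T|,\]
so the number of isomorphism classes of indecomposable $A$-modules is congruent to $|A|=n$ modulo $2$. This is exactly the parity statement already recorded in Theorem \ref{maintheorem}(1).

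Second, the number of indecomposable $kQ$-modules in type $D_n$ is $n(n-1)$, as listed in Subsection \ref{Hereditary algebras over algebraically closed fields}. This number is a product of two consecutive integers, so it is always even. On the other hand, $n$ is odd by assumption. This contradicts the congruence above, so no orientation and no partial order can make $kD_n$ Ringel self-dual.

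I do not expect a real obstacle here. The only point to check is that Proposition \ref{counting} needs no hypothesis beyond connectedness, heredity, finite type and Ringel self-duality. In particular, it does not require the partial order to be total, since its proof uses only Proposition \ref{prop} and the equivalence $\mathcal{F}(\St)\simeq\mathcal{F}(\Cs)$.

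There is also a small-case caveat. The type $D_n$ convention normally assumes $n\geq 4$. If $D_3$ is read as $A_3$, the same argument still works: $A_3$ has $6$ indecomposables, which is even, against $n=3$, which is odd. So in every case the statement follows immediately from the counting identity.
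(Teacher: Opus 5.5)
Your proof is correct and follows the paper's argument: both use Proposition \ref{counting}(c) to force $n(n-1)$, the number of indecomposable $kD_n$-modules, to have the same parity as $n$. This fails when $n$ is odd, because $n(n-1)$ is always even.
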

\begin{proof}
	The number of indecomposable $kD_n$-modules is $n(n-1)$. If it were Ringel self-dual, then by Proposition \ref{counting} we would have $n^2-n=n+2t$ for some natural number $t$, that is, $n^2$ would be even. But, by assumption, $n^2$ is odd.
\end{proof}

\subsubsection{Type $A$}

If we apply the same method to the Dynkin diagrams of type $A$ we get the following.

\begin{Cor}\label{CorAfirstcase}
	If $n\equiv 2 \mod 4$ or $n\equiv 3\mod 4$, then $kA_n$ is not Ringel self-dual for any choice of orientation and partial ordering on the simples.
\end{Cor}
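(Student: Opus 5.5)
This is a parity count, in the same spirit as Corollary \ref{corD4}, using Proposition \ref{counting}(c). Suppose $kA_n$, for some orientation and partial order, were Ringel self-dual. Since $A_n$ is Dynkin, the algebra is of finite representation type, so Proposition \ref{counting}(c) applies. It gives
\[\frac{n(n+1)}{2}=n+2|\cogen \tau T|,\]
and hence $\frac{n(n+1)}{2}-n=\frac{n(n-1)}{2}$ must be even.

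It remains to check when $\frac{n(n-1)}{2}$ is even. Write $n=4s+r$ with $r\in\{0,1,2,3\}$.
\begin{itemize}
\item For $r=2$: $n(n-1)/2=(2s+1)(4s+1)$, which is odd.
\item For $r=3$: $n(n-1)/2=(4s+3)(2s+1)$, which is odd.
\item For $r=0$ or $r=1$: the quantity is even, so this argument gives no obstruction.
\end{itemize}
So for $n\equiv 2,3\mod 4$ we get a contradiction, and no Ringel self-dual structure exists.

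There is no real obstacle here; the statement is immediate once the counting formula is available. The genuinely hard cases $n\equiv 0\mod 4$ are not covered by this argument. They presumably need the finer $\tau$-orbit information: Lemma \ref{sizetauorbitsADE}, where $\sigma(i)=n-i+1$ is the only non-trivial automorphism, together with Lemma \ref{QuiveraddT}(iv), which forces all orbits to have equal length when $\sigma^2=\id$.
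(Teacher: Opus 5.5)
Your proof is correct and is essentially the paper's argument: Proposition \ref{counting}(c) forces $n(n-1)/2$ to be even, and this fails exactly for $n\equiv 2,3 \bmod 4$. Your closing remark on $n\equiv 0 \bmod 4$ also matches how the paper treats that case in Corollary \ref{excludingtypeA}, using the equal $\tau$-orbit lengths from Lemma \ref{QuiveraddT}(b)(iv).
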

\begin{proof}
	The number of indecomposable $kA_n$-modules is $\dfrac{n(n+1)}{2}$. Assume that $kA_n$ is Ringel self-dual, then by Proposition \ref{counting} we get that $n(n+1)=2n+4t$ for some natural number $t$. So $n(n-1)\equiv 0\mod 4$. But this condition only occurs for $n\equiv 0 \mod 4$ or $n-1\equiv 0 \mod 4$.
\end{proof}

Using this simple method we get already that the algebras $kA_2$ and $kA_3$ cannot be Ringel self-dual. 
Although this counting method is sufficient to exclude the majority of cases in type $D$, for type $A$ is not sufficient. Although the underlying graph of type $A$ can be settled by using that Ringel self-dual hereditary algebras of finite type have the same number of indecomposable modules in every $\tau$-orbit.

\begin{Cor}
	If $n \not\equiv 1 \mod 4$, then $kA_n$ is not Ringel self-dual for any choice of orientation and partial ordering on the simples. \label{excludingtypeA}
\end{Cor}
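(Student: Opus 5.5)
The plan is to combine the parity obstruction of Corollary \ref{CorAfirstcase} with the equal-orbit-length constraint of Lemma \ref{QuiveraddT}(b)(iv). Corollary \ref{CorAfirstcase} already excludes $n\equiv 2,3 \mod 4$, so it remains to exclude $n\equiv 0\mod 4$. For this it suffices to show that Ringel self-duality of $kQ$, with $Q$ of type $A_n$, forces $n$ to be odd.

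Suppose $(kQ, Q_0, \leq)$ is Ringel self-dual for some orientation $Q$ of $A_n$ and some partial order $\leq$. Since $A_n$ is Dynkin, $kQ$ is of finite representation type, so Lemma \ref{QuiveraddT} applies. It provides a graph automorphism $\sigma$ of the underlying graph of $A_n$. The automorphism group of the path graph $A_n$ is $\{\id, i\mapsto n+1-i\}$, so every such $\sigma$ satisfies $\sigma^2=\id$. Hence Lemma \ref{QuiveraddT}(b)(iv) applies, and all $\tau$-orbits have a common length $l$, that is, $l_i=l$ for every $i\in Q_0$.

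Next I use Lemma \ref{sizetauorbitsADE}. For type $A_n$ the Coxeter number is $h_Q=n+1$, and there is an automorphism $\sigma'$ with $l_i+l_{\sigma'(i)}=n+1$. This $\sigma'$ need not coincide with the $\sigma$ of Lemma \ref{QuiveraddT}, but that does not matter because all $l_i$ are equal. Substituting $l_i=l_{\sigma'(i)}=l$ gives $2l=n+1$, so $n$ is odd. This excludes $n\equiv 0\mod 4$, and together with Corollary \ref{CorAfirstcase} it proves the statement.

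I expect the only delicate point to be checking that the hypothesis $\sigma^2=\id$ in Lemma \ref{QuiveraddT}(b)(iv) genuinely holds. This is immediate for $A_n$ because its automorphism group has order at most two. The remaining steps are direct applications of the earlier results.
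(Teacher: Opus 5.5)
Your proof is correct and follows the paper's argument. You get equal $\tau$-orbit lengths from Lemma \ref{QuiveraddT}(b)(iv), which applies since every automorphism of $A_n$ is an involution, then deduce $2l=n+1$, so $n$ is odd, and Corollary \ref{CorAfirstcase} finishes. The only difference is that the paper obtains $2l=n+1$ by counting indecomposables, $nl=\frac{n(n+1)}{2}$ over the $n$ orbits, whereas you use the Coxeter-number pairing $l_i+l_{\sigma'(i)}=n+1$ of Lemma \ref{sizetauorbitsADE}; either works.
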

\begin{proof}
	Assume that there exists an orientation on $A_n$ and a partial ordering on $Q_0=\{1, \ldots, n\}$ that makes $(A, Q_0, \leq)$ Ringel self-dual.
	$A_n$ has two graph automorphisms $\sigma$ and both satisfy $\sigma^2=\id$.
		By Lemma \ref{QuiveraddT}, there exists a number $l$ such that every $\tau$-orbit of $A$ has $l$ indecomposable modules. Since $A$ has $n$ $\tau$-orbits we obtain
	\begin{align*}
		nl=\frac{n(n+1)}{2} \implies 2l=n+1.
	\end{align*}
Hence $n$ is odd. In particular, $n\not\equiv 0 \mod 4$. So the result follows from Corollary \ref{CorAfirstcase}.
\end{proof}

So far, we only discussed the underlying form of underlying Dynkin diagram of $A$, but of course the orientation imposed on the underlying Dynkin diagram also plays a role. 
 Nakayama hereditary algebras are Morita equivalent to algebras $kA_n$ where $A_n$ is linearly oriented. In the following we see that such algebras are never Ringel self-dual.
 
 \begin{Cor}
 	Let $A$ be a connected Nakayama hereditary algebra which is not semi-simple. Then there is no quasi-hereditary structure on $A$ making $A$ Ringel self-dual.
 \end{Cor}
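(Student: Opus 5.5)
By Morita invariance we may take $A=kA_n$ with the linear orientation $1\rightarrow 2\rightarrow\cdots\rightarrow n$ and $n\geq 2$. Suppose, for a contradiction, that some partial order $\leq$ makes $(A,Q_0,\leq)$ Ringel self-dual. By Corollary \ref{excludingtypeA} we already know $n\equiv 1 \mod 4$, so $n\geq 5$, although the argument below will not need this. The plan is to derive a contradiction from Lemma \ref{QuiveraddT}(b)(iv), which says that all $\tau$-orbits must have the same length. The linear orientation violates this directly.

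For the linearly oriented quiver, the indecomposable modules are the intervals. The projectives $P(i)$ are uniserial, and their orbit lengths are well known. In the Auslander--Reiten quiver, which is a triangle, the $\tau$-orbit of $P(i)$ has $n-i+1$ elements for one labelling convention, or $i$ elements for the other. Either way the orbit lengths are $1,2,\ldots,n$, and these are pairwise distinct once $n\geq 2$. For instance, the simple projective $P$ at the sink satisfies $P\cong I$ for the injective hull of the corresponding simple at the other end, so its orbit has length $1$. Meanwhile the projective-injective module $P(1)=I(n)$ also has orbit length $1$. To be careful, I will compute instead with Lemma \ref{sizetauorbitsADE}: there $\sigma(i)=n-i+1$ and $l_i+l_{n-i+1}=n+1$. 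Explicitly, the projective-injective $P(1)$ has $l_1=1$, which forces $l_n=n\neq 1$.

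Lemma \ref{QuiveraddT}(b)(iv) needs $\sigma^2=\id$. Here the automorphism $\sigma$ produced by Lemma \ref{QuiveraddT} is a graph automorphism of $A_n$. Both automorphisms of $A_n$ (the identity and the flip) are involutions, exactly as observed in the proof of Corollary \ref{excludingtypeA}. Hence all $\tau$-orbits have equal length. This contradicts $l_1=1\neq n=l_n$.

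The only step needing care is the orbit-length computation. Specifically, one must confirm that $P(1)$, the projective cover of the source simple, is injective and therefore forms a $\tau$-orbit of length one. This holds because $P(1)$ is the unique indecomposable module of length $n$, and it equals $I(n)$. The existence of another orbit of length greater than $1$ is clear for $n\geq 2$: the simple projective $S(n)=P(n)$ is not injective, so its orbit has length at least $2$. No further obstacle is expected.
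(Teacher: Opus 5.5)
Your proof is correct, but it takes a different route from the paper's.

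\textbf{Your route.} You derive the contradiction from Lemma \ref{QuiveraddT}(b)(iv). Both graph automorphisms of $A_n$ are involutions, so Ringel self-duality forces all $\tau$-orbits to have the same length; this is the mechanism already used in Corollary \ref{excludingtypeA}. For the linear orientation, however, the projective-injective $P(1)=I(n)$ forms a singleton orbit. The orbit of the non-injective simple projective $P(n)=S(n)$ has length $n\geq 2$.

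\textbf{The paper's route.} The paper never invokes part (b) of Lemma \ref{QuiveraddT} here. It uses only that $T$ is a complete slice (Proposition \ref{counting}). Hence $T$ contains the projective-injective module, and exactly one simple module, since all simples lie in one orbit. By Lemma \ref{QuiveraddT}(a), the summands of $T$ form a linearly oriented line of length $n$. In the triangular Auslander--Reiten quiver, the only such lines through $P(1)$ containing a simple are the line of projectives and the line of injectives. So $T\cong A$ or $T\cong DA$, contradicting Proposition \ref{prop}(e).

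\textbf{Trade-offs.} The paper's argument avoids the automorphism $\sigma$ and the bookkeeping behind (b)(ii)--(iv). The price is that it depends on the specific shape of the Auslander--Reiten quiver of the linear orientation. Yours is shorter and more general. Combined with Lemma \ref{sizetauorbitsADE}, it shows that for every orientation of $A_n$ with $n>1$, a Ringel self-dual structure forces all orbits to have length $(n+1)/2>1$. So no projective-injective module can exist at all.

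\textbf{A false sentence to remove.} In your second paragraph you claim that the simple projective $P(n)=S(n)$ is isomorphic to an injective and so has orbit length $1$. This is wrong: $P(n)$ is not injective. Its orbit runs through all simples and ends at $S(1)=I(1)$, so it has length $n$. With your fixed orientation there is no labelling ambiguity: the orbit of $P(i)$ has exactly $i$ elements. Your final argument only uses $l_1=1$ and $l_n\geq 2$, both of which are correct, so the conclusion is unaffected.
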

\begin{proof}
	Assume that $A=kA_n$ for some natural number $n>1$ and that there exists a partial ordering $\leq$ on $Q_0=\{1, \ldots, n\}$ that makes $(A, Q_0, \leq)$ Ringel self-dual. Let $T$ be the basic characteristic tilting module. By Proposition \ref{counting}, $T$ is a complete slice module and the indecomposable direct summands of $T$ intersect all $\tau$-orbits. 
	$A$ has a unique projective-injective module  which is the unique indecomposable module with Loewy length $n$. Its $\tau$-orbit is a singleton and all simple modules belong to the same $\tau$-orbit (see for instance \citep[V, Corollary 4.2]{zbMATH02228448}).
	
	 Thus $T$ contains the unique projective-injective module and a simple module. By Lemma \ref{QuiveraddT}(a), the complete slice must be a directed line of length $n$ that contain the projective-injective module and a simple module as direct summands. 
	In the Auslander--Reiten quiver, the projective-injective module has exactly one outgoing arrow, to an injective module, and exactly one incoming arrow, from a projective module. Hence, the only directed lines of length $n$ satisfying these conditions are the line consisting of the $n$ indecomposable projective modules and the line consisting of the $n$ indecomposable injective modules.	
	 That is, $T=A$ or $T=DA$. In the first case, $\mathcal{F}(\Cs)=A\m$ while in the second case $\mathcal{F}(\St)=A\m$. By Proposition \ref{prop}(e), we obtain a contradiction so $A$ is not Ringel self-dual.
\end{proof}

\subsubsection{Type $E$}

In this part, we show that there is no Ringel self-duality in type $E$. Here, the case $E_6$ is significantly easier than $E_7$ and $E_8$ and with current tools we can already verify that Ringel self-duality does not occur for $E_6$.

\begin{Cor}\label{cor4dot5}
	The path algebra $kE_6$ is not Ringel self-dual for any choice of orientation and partial ordering on the simples.
\end{Cor}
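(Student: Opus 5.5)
The plan is to combine the counting with the orbit-length constraint of Lemma \ref{QuiveraddT}(b)(iv), and then derive a parity contradiction at a vertex fixed by every graph automorphism.

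Suppose, for a contradiction, that some orientation $Q$ of $E_6$ and some partial order $\leq$ on $Q_0$ make $(kQ, Q_0, \leq)$ Ringel self-dual, and let $T$ be its characteristic tilting module. Note first that Proposition \ref{counting}(c) alone does not exclude this case: $36=6+2\cdot 15$ has the right parity, so finer information about the $\tau$-orbits is needed.

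Lemma \ref{QuiveraddT}(b) provides a graph automorphism $\sigma$ of the $E_6$ diagram. It also provides integers $p_i$ and $q_i$ with $p_i=q_{\sigma(i)}$, where $\tau^{-p_i}P(i)\in\add T$ and $\tau^{-p_i-q_i}P(i)$ is injective.
\begin{itemize}
\item The automorphism group of the $E_6$ diagram has order two, generated by the reflection swapping the two long arms. Hence $\sigma^2=\id$ automatically.
\item Lemma \ref{QuiveraddT}(b)(iv) then shows that all six $\tau$-orbits have a common length $l$.
\item Since there are $36$ indecomposables in six orbits, $6l=36$, so $l=6$.
\end{itemize}
The length of the orbit of $P(i)$ is $p_i+q_i+1$. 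Using $q_i=p_{\sigma(i)}$, which holds because $\sigma$ is an involution, we get
\[ p_i+p_{\sigma(i)}=5 \quad \text{for every } i\in Q_0. \]

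Now choose $i$ to be the branch vertex of $E_6$. The branch vertex is fixed by both graph automorphisms, so $\sigma(i)=i$ whichever automorphism $\sigma$ is. The displayed equation then reads $2p_i=5$, which is impossible for an integer $p_i$. This contradiction shows that no such structure exists.

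The one point that needs care is that the equation $p_i+q_i+1=l_i$ really counts the whole $\tau$-orbit of $P(i)$. This follows from the orbit description established in the proof of Lemma \ref{QuiveraddT}: the orbit consists of the members in $\mathcal{F}(\St)$ from $P(i)$ up to $T_i$, followed by the members in $\mathcal{F}(\Cs)$ from $T_i$ to the injective. By Proposition \ref{prop}(c), these two parts overlap only in $T_i$. Beyond this, the remaining step is simply that the branch vertex is fixed, so I expect no real obstacle.
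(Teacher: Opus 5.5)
Your proof is correct and follows essentially the paper's argument: at a vertex fixed by every diagram automorphism, $q_i=p_i$, so the orbit length $p_i+q_i+1=2p_i+1$ is odd, yet it must equal $6$. The only difference is how the length $6$ is obtained. The paper takes the short-arm vertex $6$ and reads off $l_6=6$ directly from Lemma~\ref{sizetauorbitsADE} (via $l_6+l_{\sigma(6)}=12$ with $\sigma(6)=6$). You instead use the branch vertex and derive the common orbit length from Lemma~\ref{QuiveraddT}(b)(iv) together with the count $36/6$; both routes are valid.
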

\begin{proof}
	We consider $E_6$ with the following labelling:
	\begin{equation*}
		\begin{tikzcd}[row sep=small, column sep=small]
			1 \arrow[r, dash] & 2 \arrow[r, dash] & 3 \arrow[r, dash] \arrow[d, dash] & 4 \arrow[r, dash] & 5 \\
			& & 6 & &
		\end{tikzcd}.
	\end{equation*}
	$E_6$ admits two graph automorphisms: the identity and $\sigma\in S_6$ where $\sigma=(1 5)(2 4)$. Let $l_6$ be the number of indecomposable modules in the $\tau$-orbit of $P(6)$.
	By Lemma \ref{sizetauorbitsADE}, $2l_6=12$. If $kE_6$ would be Ringel self-dual then by Lemma \ref{QuiveraddT}  we would obtain $6=l_6=p_6+q_6+1=2p_6+1$ for some $p_6\in \mathbb{N}$. So, the claim follows. 
\end{proof}

For the cases $E_7$ and $E_8$, Lemma \ref{QuiveraddT} with Lemma \ref{sizetauorbitsADE} do not settle the question on their own, but they restrict the possible characteristic tilting modules. Indeed, since the Dynkin diagrams $E_7$ and $E_8$ have trivial automorphism group, combining the two lemmas shows: if $(kQ, Q_0, \leq)$ is Ringel self-dual for a quiver $Q$ of type $E_7$ or $E_8$ and a partial order on $Q_0$, then the indecomposable direct summands of the corresponding characteristic tilting module form a complete slice lying centrally in the Auslander-Reiten quiver of $kQ$. Our approach is to show that no such central slice contains a simple module, contradicting the fact that $T(i)=S(i)$ for $i$ minimal with respect to $\leq$.

\begin{Lemma}\label{reductionfortypeE}
	Let $Q$ be a quiver of type $E_7$ or $E_8$ with an arbitrary orientation, and let $j$
	be a vertex of $Q$ such that $j\rightarrow b_1$ is the unique arrow starting at $j$.
	Denote by $\mathcal{B}$ the full subquiver of $Q$ supported on the connected component
	of $b_1$ in $Q\setminus\{j\}$, and assume that $\mathcal{B}$ is a path with $b_1$ as an
	endpoint, say
	\[
	\mathcal{B}\colon \begin{tikzcd}[row sep=small, column sep=small]
		b_1 \arrow[r, dash] & b_2 \arrow[r, dash] & \cdots  \arrow[r, dash] & b_l
	\end{tikzcd}
	\qquad (l\leq 4),
	\]
	the vertices being numbered consecutively starting at $b_1$. Put
	$m_Q(j):=\min\{m\geq 0\colon \tau^{m}S(j)\ \text{is projective}\}$. Then
	\[
	m_Q(j)=1+|\{1\leq s\leq l-1\colon b_s\rightarrow b_{s+1}\in Q_1\}|.
	\]
\end{Lemma}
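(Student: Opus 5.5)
The plan is to locate $S(j)$ in its $\tau$-orbit by computing $\tau S(j)$ explicitly. After that, the remaining count reduces to a computation on the branch $\mathcal{B}$, which is a quiver of type $A_l$.

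\emph{First step.} Since $j\rightarrow b_1$ is the only arrow starting at $j$, our composition convention gives a minimal projective resolution $0\rightarrow P(b_1)\rightarrow P(j)\rightarrow S(j)\rightarrow 0$. In particular $S(j)$ is not projective, so $m_Q(j)\geq 1$. We use $\tau=D\Ext^1_A(-,A)$ from Subsection \ref{Hereditary algebras over algebraically closed fields}, or equivalently the Nakayama functor. This gives $\tau S(j)\cong\ker\bigl(I(b_1)\rightarrow I(j)\bigr)$, where the map is induced by the arrow $j\rightarrow b_1$.

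The module $I(b_1)$ is spanned by the paths ending at $b_1$. The neighbours of $b_1$ are only $j$ and possibly $b_2$, because $\mathcal{B}$ is a connected component of $Q\setminus\{j\}$ and a path with endpoint $b_1$. Hence the paths ending at $b_1$ that do not pass through $j\rightarrow b_1$ are exactly the paths inside $\mathcal{B}$. So we expect $\tau S(j)\cong I_{\mathcal{B}}(b_1)$, the indecomposable injective $k\mathcal{B}$-module at $b_1$, regarded as an $A$-module by extending it by zero. This accounts for the summand $1$ in the formula.

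\emph{Second step.} It remains to show that $m(I_{\mathcal{B}}(b_1))=|\{1\leq s\leq l-1\colon b_s\rightarrow b_{s+1}\in Q_1\}|$. We would prove this by induction on $l$. No arrow leaves $\mathcal{B}$ through $b_1$, since the only outgoing arrow at $j$ points into $b_1$ and no arrow goes from $b_1$ to $j$. Hence $\mathcal{B}$ is closed under successors, and $P_A(b)=P_{\mathcal{B}}(b)$ for all $b\in\mathcal{B}$.

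Consider an indecomposable $X$ supported on $\mathcal{B}$. We compute $\tau X$ again by the Nakayama functor applied to its minimal projective presentation by the $P_{\mathcal{B}}(b_s)$. The resulting $\nu P(b_s)=I_A(b_s)$ may have extra support at $j$ and beyond, but these contributions should cancel in the kernel exactly as in the first step.

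The induction then runs as follows.
\begin{itemize}
\item If $b_1\rightarrow b_2$ (oriented away from $b_1$), then $I_{\mathcal{B}}(b_1)=S(b_1)$ is not projective, and $\tau S(b_1)\cong I_{\mathcal{B}'}(b_2)$ with $\mathcal{B}'=\{b_2,\dots,b_l\}$. This is the first-step computation with $(j,b_1)$ replaced by $(b_1,b_2)$, and it contributes $+1$.
\item If $b_2\rightarrow b_1$, then $I_{\mathcal{B}}(b_1)$ is the interval module on $\{b_1,\dots,b_r\}$ for a maximal run of arrows pointing towards $b_1$. Here we would compare it with the case of the shorter branch without increasing the $\tau$-count: either it is projective, or $\tau$ of it reduces to an injective of a shorter branch starting at $b_{r+1}$.
\end{itemize}
As a cross-check, one can verify the answer with the Coxeter matrix $\Phi$, using $\dv\tau M=\Phi\dv M$ for non-projective $M$: iterate on $\dv S(j)$ until a dimension vector of a projective $P(x)$ appears. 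Since $l\leq 4$, this leaves only a small number of orientations to check.

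\emph{Main obstacle.} The hard part is the second step, namely controlling $\tau_A$ on modules supported on $\mathcal{B}$, because in general $\tau_A\neq\tau_{\mathcal{B}}$. The first thing to try is to show that, for the specific modules arising in the iteration (interval modules of $\mathcal{B}$ containing $b_1$), the Nakayama-functor kernel never acquires support outside $\mathcal{B}$. This should follow from the single-arrow connection $j\rightarrow b_1$, by the same path-counting used in the first step. Since $\mathcal{B}$ is closed under successors, an alternative is to argue that the indecomposable modules supported on $\mathcal{B}$ with nonzero top at a vertex of $\mathcal{B}$ are preserved, via Lemma \ref{taumeasure}, by counting the columns of the Auslander--Reiten quiver along the $\tau$-orbits of $P(b_1),\dots,P(b_l)$.
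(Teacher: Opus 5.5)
Your proposal is correct and follows the paper's proof essentially step for step. The paper also gets $\tau S(j)\cong\ker\bigl(I(b_1)\to I(j)\bigr)\cong I_{\mathcal{B}}(b_1)$ and then inducts on the length of the branch, with the same two cases according to the orientation of the edge between $b_1$ and $b_2$. It states the induction for an arbitrary arrow $x\to x_1$ whose far component is a path with endpoint $x_1$, which is what your substitutions of $(j,b_1)$ by $(b_1,b_2)$ or $(b_r,b_{r+1})$ implicitly require.

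The step you flag as the main obstacle is handled there directly. For $r<l$ the minimal projective resolution is $0\to P(b_{r+1})\to P(b_r)\to I_{\mathcal{B}}(b_1)\to 0$. The kernel of $I(b_{r+1})\to I(b_r)$ is then the injective at $b_{r+1}$ of the branch $b_{r+1},\dots,b_l$, by the same path count as in your first step. That count uses only that this branch meets the rest of the tree through the single arrow $b_r\to b_{r+1}$, not the arrow $j\to b_1$. So this step adds exactly one to the $\tau$-count, matching the counted arrow $b_r\to b_{r+1}$, while the run $b_r\to\cdots\to b_1$ contributes nothing.
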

\begin{proof}
	Given $M\in kQ\m$, we write $m_Q(M):=\min\{m\geq 0\colon \tau^{m}M\ \text{is projective}\}$.
	Since $j$ is not a sink, then $S(j)$ is not projective. We shall compute the $\tau$-orbit of $S(j)$ in $kQ$. We can observe that $\rad P(j)=P(b_1)$ so $0\rightarrow P(b_1)\rightarrow P(j)\rightarrow S(j)\rightarrow 0$ is the minimal projective resolution of $S(j)$. Hence $\tau S(j)=\ker (I(b_1)\rightarrow I(j))$. The source of the arrows that reach $b_1$ are either from the path $\mathcal{B}$ or from $j$. Moreover, since $kQ$ is hereditary, the map $I(b_1)\rightarrow I(j)$ is surjective.
	 So there exists an exact sequence $0\rightarrow I_\mathcal{B}(b_1)\rightarrow I(b_1)\rightarrow I(j)\rightarrow 0$. Given $\mathcal{B}$ a subquiver of $Q$ we write $I_\mathcal{B}(b_1)$ to denote injective envelope of $S(b_1)$ in $k\mathcal{B}$ (where $k\mathcal{B}$ is a quotient of $kQ$). Thus, $\tau S(j)=I_\mathcal{B}(b_1)$ and so
	 \begin{align}
	 	m_Q(j)=1+m_Q(I_\mathcal{B}(b_1)) \label{eq4}
	 \end{align}
	We write $M_{i, k}$ for the module $\begin{matrix}
		b_k \\ \vdots \\ b_{i+1} \\ b_i
	\end{matrix}$ for $i\in \{1, 2, 3, 4\}$ and $k\in \{i+1, \ldots, 4\}$.
	The form of $I_\mathcal{B}(b_1)$ depends on the orientation on $\mathcal{B}$ and on $l$.  In total, there are four possible forms for $I_\mathcal{B}(b_1)$:
	\begin{enumerate}
		\item $I_\mathcal{B}(b_1)=S(b_1)$ if no arrows ends at $b_1$ in $\mathcal{B}$;
		\item $I_\mathcal{B}(b_1)=M_{1, 2}$ if there exists an arrow $b_2\rightarrow b_1$ and no arrow ends at $b_2$;
	\item $I_\mathcal{B}(b_1)=M_{1, 3}$ if $b_3\rightarrow b_2\rightarrow b_1$ is a subquiver of $\mathcal{B}$ and no arrow ends at $b_3$;
\item $I_\mathcal{B}(b_1)=M_{1, 4}=P(b_4)$ if $\mathcal{B}$ is the quiver $b_4\rightarrow b_3\rightarrow b_2\rightarrow b_1$.
	\end{enumerate}
To determine $m_Q(I_\mathcal{B}(b_1))$ we
prove the following statement by induction on $r=|\mathcal{P}|$: for every arrow
$x\rightarrow x_1$ of $Q$ such that the component
$	\mathcal{P}\colon \begin{tikzcd}[row sep=small, column sep=small]
	x_1 \arrow[r, dash] & x_2 \arrow[r, dash] & \cdots  \arrow[r, dash] & x_r
\end{tikzcd}$ of
$x_1$ in $Q\setminus\{x\}$ is a path with endpoint $x_1$, one has
\[
m(I_{\mathcal{P}}(x_1))=|\{1\leq s\leq r-1\colon x_s\rightarrow x_{s+1}\in Q_1\}|.\]

If $r=1$, then $\mathcal{P}$ is just the vertex $x_1$ and so $I_\mathcal{P}(x_1)=S(x_1)=P(x_1)$ thus in such a case $m(I_\mathcal{P}(x_1))=0$.
Assume that $r>1$. If there is an arrow $x_1\rightarrow x_2$, then $I_\mathcal{P}(x_1)=S(x_1)$ and $0\rightarrow P(x_2)\rightarrow P(x_1)\rightarrow  S(x_1)\rightarrow 0$ is the minimal projective resolution of $S(x_1)$. Thus $\tau S(x_1)=\ker (I(x_2)\rightarrow I(x_1))=I_{\mathcal{P}'}(x_2)\neq 0$, where $	\mathcal{P}'\colon \begin{tikzcd}[row sep=small, column sep=small]
	x_2 \arrow[r, dash] & \cdots  \arrow[r, dash] & x_r
\end{tikzcd}$. So by induction \begin{align*}
	m(I_\mathcal{P}(x_1))=1+m(I_{\mathcal{P}'}(x_2))=1+|\{2\leq s\leq r-1\colon x_s\rightarrow x_{s+1}\in Q_1\}|=|\{1\leq s\leq r-1\colon x_s\rightarrow x_{s+1}\in Q_1\}|.
\end{align*}
Assume now that there exists an arrow $x_2\rightarrow x_1$. Let $k$ be maximal with $x_k\rightarrow \cdots \rightarrow x_2\rightarrow x_1$. Then $I_\mathcal{P}(x_1)=M_{1, k}$ and $x_k$ is a source, thus $I(x_k)=S(x_k)$. If $k=r$, then $I_\mathcal{P}(x_1)=P(x_r)$ and so $m(I_\mathcal{P}(x_1))=0$. If $k<r$, then $0\rightarrow P(x_{k+1})\rightarrow P(x_k)\rightarrow M_{1, k}\rightarrow 0$ is exact and so $\tau M_{1, k}=\ker (I(x_{k+1})\rightarrow I(x_k) )=I_{\mathcal{P}'}(x_{k+1})\neq 0$, with $	\mathcal{P}'\colon \begin{tikzcd}[row sep=small, column sep=small]
	x_{k+1} \arrow[r, dash] & \cdots  \arrow[r, dash] & x_l
\end{tikzcd}$. By induction, 
\begin{align*}
	m(I_\mathcal{P}(x_1))=1+m(I_{\mathcal{P}'}(x_{k+1}))=1+|\{k+1\leq s\leq r-1\colon x_s\rightarrow x_{s+1}\in Q_1\}|=|\{1\leq s\leq r-1\colon x_s\rightarrow x_{s+1}\in Q_1\}|.
\end{align*}
Thus, from $m_Q(I_\mathcal{B}(b_1))=|\{1\leq s\leq l-1\colon b_s\rightarrow b_{s+1}\in Q_1\}|$ and by \eqref{eq4} the claim follows.
\end{proof}

\begin{Lemma}\label{reductiontoOp}
	Let $Q$ be a quiver with $n$ vertices and $kQ$ the path algebra of $Q$. Put
	$m_Q(j):=\min\{m\geq 0\colon \tau^{m}S(j)\ \text{is projective}\}$. Assume that all $\tau$-orbits admit $l+1$ indecomposable modules. Then $m_{Q^{op}}(j)=l-m_Q(j)$.
\end{Lemma}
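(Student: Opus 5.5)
We use the standard duality $D\colon kQ\m\to kQ^{op}\m$, which identifies $kQ^{op}$-modules with $D$ of $kQ$-modules. It sends simples to simples, with $DS_Q(j)=S_{Q^{op}}(j)$. It exchanges projectives and injectives: $DI_Q(i)=P_{Q^{op}}(i)$ and $DP_Q(i)=I_{Q^{op}}(i)$. It also intertwines the translations, $D\circ\tau_Q=\tau^{-1}_{Q^{op}}\circ D$ (the identity already used in Theorem \ref{maintheoremoneimplication}(b)). Consequently the $\tau$-orbits of $kQ^{op}$ are exactly the images under $D$ of the $\tau$-orbits of $kQ$, with the direction reversed. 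In particular all $\tau$-orbits of $kQ^{op}$ also have $l+1$ indecomposables, although we will not even need this.

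First, we locate $S(j)$ inside its $\tau_Q$-orbit. Since $kQ$ is representation-finite (implicitly, since $\tau$-orbits are finite), this orbit is
\[P,\ \tau^{-1}P,\ \ldots,\ \tau^{-l}P=I\]
with $l+1$ distinct members: one projective $P$ at one end and one injective $I$ at the other (Subsection \ref{Hereditary algebras over algebraically closed fields}). By definition $m_Q(j)$ is the distance from $S(j)$ to the projective end, so $S(j)\cong\tau^{-m_Q(j)}P$. Hence $\tau^{-(l-m_Q(j))}S(j)\cong I$, and the distance from $S(j)$ to the injective end is $l-m_Q(j)$. Equivalently, $l-m_Q(j)=\min\{r\geq 0\colon \tau^{-r}S(j)\text{ is injective}\}$. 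Minimality holds because the members of the orbit are pairwise non-isomorphic and only the last one is injective.

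Next, we apply $D$. For $r\geq0$ we have $D\tau_Q^{-r}S_Q(j)\cong\tau_{Q^{op}}^{r}S_{Q^{op}}(j)$. Moreover $\tau_Q^{-r}S_Q(j)$ is injective if and only if its dual is projective over $kQ^{op}$. Therefore
\[m_{Q^{op}}(j)=\min\{r\geq0\colon \tau_{Q^{op}}^rS_{Q^{op}}(j)\text{ is projective}\}=\min\{r\geq0\colon \tau_Q^{-r}S_Q(j)\text{ is injective}\}=l-m_Q(j).\]

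The only delicate point is the bookkeeping that $D$ swaps $\tau$ and $\tau^{-1}$ with matching vertex labels, i.e.\ that $D S_Q(j)$ is the simple of $kQ^{op}$ at the same vertex $j$. The rest, namely that each orbit is a chain of exactly $l+1$ distinct modules from projective to injective, is the standard structure recalled before Lemma \ref{sizetauorbitsADE}.
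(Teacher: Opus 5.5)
Your proof is correct and takes essentially the same route as the paper. Both arguments use the duality $D$ with $D\tau_{kQ}\cong\tau^{-1}_{kQ^{op}}D$, which exchanges projectives and injectives at the same vertex, together with the fact that each $\tau$-orbit is a chain of $l+1$ pairwise distinct modules running from a projective to an injective. The only difference is direction: the paper argues from $Q^{op}$ back to $Q$, writing $S_Q(j)\cong\tau^{m_{Q^{op}}(j)}I(i)\cong\tau^{-(l-m_{Q^{op}}(j))}P(k)$, whereas you go from $Q$ to $Q^{op}$ and spell out the minimality and distinctness bookkeeping that the paper leaves implicit.
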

\begin{proof}
	We write $S_Q(j)$ to denote the simple module over $kQ$ indexed by $j$ while $S_{Q^{op}}(j)$ denotes the simple modules over $kQ^{op}$ indexed by $j$. Set $A=kQ$.
	Observe that $D\tau_A=DD\Tr=\Tr DD=\tau^{-1}_{A^{op}}D$. 
	
	 By assumption, $\tau_{A^{op}}^{-m_{Q^{op}}(j)} P_{A^{op}}(i)\cong S_{Q^{op}}(j)$ for some $i$. Hence \begin{align}
	 	S_Q(j)\cong DS_{Q^{op}}(j)\cong D\tau_{A^{op}}^{-m_{Q^{op}}(j)} P_{A^{op}}(i)\cong \tau_A^{m_{Q^{op}}(j)} D P_{A^{op}}(i)\cong \tau_A^{m_{Q^{op}}(j)}  I_A(i)
	 \end{align}
 Therefore, there exists some $k$ such that
 \begin{align}
 	S_Q(j)\cong \tau_A^{m_{Q^{op}}(j)}  I_A(i) \cong \tau_A^{m_{Q^{op}}(j)}  \tau^{-l}P_A(k)\cong \tau_A^{-(l-m_{Q^{op}}(j))} P_A(k).
 \end{align}
Thus, the claim follows.
\end{proof}

\begin{Theorem}\label{tauorbitsE7E8}
	\begin{enumerate}
		\item Let $Q$ be a quiver of type $E_7$ with an arbitrary orientation. The following assertions hold.
		\begin{enumerate}
			\item Then every $\tau$-orbit of the path algebra $kQ$ has 9 indecomposable modules.
			\item For every vertex $i\in Q_0$ the module $\tau^{-4}P(i)$ is non-zero and not simple.
		\end{enumerate} 
		\item Let $Q$ be a quiver of type $E_8$ with an arbitrary orientation. The following assertions hold.
		\begin{enumerate}
			\item every $\tau$-orbit of the path algebra $kQ$ has 15 indecomposable modules.
			\item For every vertex $i\in Q_0$, the module $\tau^{-7}P(i)$ is non-zero and not simple.
		\end{enumerate}
	\end{enumerate}
\end{Theorem}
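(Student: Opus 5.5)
Part (a) in both cases comes directly from Lemma \ref{sizetauorbitsADE}. The Dynkin diagrams $E_7$ and $E_8$ have no non-trivial graph automorphism, so the automorphism $\sigma$ given there is the identity. Hence $2l_i=h_Q$ for every vertex $i$, that is, $l_i=9$ in type $E_7$ and $l_i=15$ in type $E_8$. Every $\tau$-orbit contains exactly one indecomposable projective, so the orbit of $P(i)$ is $P(i),\tau^{-1}P(i),\ldots,\tau^{-(l-1)}P(i)$, all non-zero. In particular $\tau^{-4}P(i)\neq 0$ in type $E_7$ and $\tau^{-7}P(i)\neq 0$ in type $E_8$.

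For (b) we must rule out that the central module $\tau^{-c}P(i)$ is simple, where $c=4$ in type $E_7$ and $c=7$ in type $E_8$. Write $l=8$ in type $E_7$ and $l=14$ in type $E_8$, so $l=2c$ and every $\tau$-orbit has $l+1$ modules. A simple module $S(j)$ equals $\tau^{-c}P(i)$ for some $i$ exactly when $m_Q(j)=c$, where $m_Q(j)=\min\{m\geq 0\colon \tau^mS(j)\text{ is projective}\}$. So the claim is equivalent to
\[m_Q(j)\neq c \quad\text{for every orientation } Q \text{ and every vertex } j.\]
By Lemma \ref{reductiontoOp}, $m_{Q^{op}}(j)=l-m_Q(j)=2c-m_Q(j)$, so $m_Q(j)=c$ if and only if $m_{Q^{op}}(j)=c$. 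We may therefore replace $Q$ by $Q^{op}$ whenever convenient.

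The plan is to bound $m_Q(j)$ strictly below $c$ using Lemma \ref{reductionfortypeE}, after reversing orientation if needed.
\begin{enumerate}[(1)]
\item If $j$ is a sink, then $S(j)$ is projective and $m_Q(j)=0<c$.
\item If $j$ is a leaf, then in $Q$ or in $Q^{op}$ its unique arrow starts at $j$. The component of its neighbour $b_1$ in $Q\setminus\{j\}$ is then the rest of the tree, which is not a path. So Lemma \ref{reductionfortypeE} does not apply directly, and more care is needed.
\item For a general vertex $j$, I would compute $\tau S(j)$ in the same way as in that lemma: $\tau S(j)=\ker\bigl(\bigoplus I(b)\to I(j)\bigr)$, the sum running over the arrows $j\to b$. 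This breaks $Q\setminus\{j\}$ into branches. For branches that are paths, the inductive formula proved inside Lemma \ref{reductionfortypeE} gives $m$ of the branch injective. I expect the general statement to be that $m_Q(j)$ is at most $1$ plus the largest number of forward-oriented arrows along a branch.
\end{enumerate}
Branches in $E_7$ and $E_8$ have length at most $4$ and $5$, so this bound should stay below $c$, possibly after passing to $Q^{op}$. The bound is not uniform, and where it fails one can use the symmetry $m_Q(j)=c \iff m_{Q^{op}}(j)=c$ to choose the orientation of the branch at $j$ that is favourable.

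The main obstacle is the vertex case where removing $j$ leaves a non-path component containing the branch point, for example the leaf at the end of the long arm. Here $I_{\mathcal{B}}(b_1)$ is an injective over a $D$- or $E_6$-type subquiver, and its $\tau$-distance to a projective is no longer a simple count. I would first try a cruder monotonicity argument. By Lemma \ref{taumeasure}(a), a non-zero map $S(j)\to Y$ forces $m(S(j))\leq m(Y)$, which lets one compare $S(j)$ with suitable projective or injective modules. If that fails, I would fall back on the Coxeter matrix, $\dv\tau^{-1}M=\Phi^{-1}\dv M$. Since $m_Q(j)$ depends only on the orientation, which can be changed by reflecting at sinks or sources, this reduces to a finite check: compute $\Phi^{-c}$ applied to the dimension vectors of the projectives and verify that no result is a unit vector. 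This computation is finite but tedious.
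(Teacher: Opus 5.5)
\textbf{Verdict: genuine gap in (b).} Your part (a), the reduction of (b) to showing $m_Q(j)\neq c$ for every vertex $j$, and the symmetry $m_{Q^{op}}(j)=2c-m_Q(j)$ from Lemma \ref{reductiontoOp} all match the paper. The problem is that you never finish (b).
\begin{itemize}
\item Step (3) rests on a general multi-branch bound that you only ``expect''. It is not justified when $j$ has several outgoing arrows, since $\ker(\bigoplus I(b)\to I(j))$ need not split along the branches.
\item The case you call the main obstacle is left to a Coxeter-matrix check over all $2^6$, resp.\ $2^7$, orientations, which you do not carry out.
\end{itemize}
As written, (b) is not proved.

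The missing idea is that a non-path component never has to be faced.
\begin{itemize}
\item \emph{Leaves.} A leaf is always a source or a sink, so it is a sink in $Q$ or in $Q^{op}$. Your own step (1) plus the symmetry then give $m_Q(j)\in\{0,2c\}$. Equivalently, for a source $S(j)=I(j)$ is the last of the $2c+1$ modules in its orbit. So the leaf at the end of the long arm is trivial, not an obstacle.
\item \emph{Vertices that are neither sources nor sinks.} In $Q$ or in $Q^{op}$, such a $j$ has exactly one outgoing arrow $j\to b_1$, and the component of $b_1$ in $Q\setminus\{j\}$ is a path with endpoint $b_1$. For a vertex of degree two, take the orientation in which the outgoing arrow points away from the branch vertex. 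For the branch vertex, take the orientation in which its out-degree is one; the component is then one of the three arms.
\item \emph{Bounds.} Lemma \ref{reductionfortypeE} then gives $m\le 2$ (degree two) and $m\le 3$ (branch vertex) for $E_7$, and $m\le 3$, resp.\ $m\le 4$, for $E_8$. All of these are strictly below $c$. If the favourable orientation is $Q^{op}$, Lemma \ref{reductiontoOp} gives $m_Q(j)\ge 2c-(\text{bound})>c$.
\end{itemize}
This four-case analysis (sink, source, degree two, branch vertex) is exactly the paper's proof. It needs no multi-branch formula and no computation with injectives over $D$- or $E_6$-type subquivers.
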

\begin{proof}
The Dynkin diagrams $E_7$ and $E_8$ have trivial automorphism groups (see for instance \cite{zbMATH01701639}), thus parts 1(a) and 2(a) follow from Lemma \ref{sizetauorbitsADE}. For a given vertex $j$ of $Q$, write $m_Q(j):=\min\{m\geq 0\colon \tau^{m}S(j)\ \text{is projective}\}$.

	Assume first that $Q$ is of type $E_7$, where $E_7$ is considered with the following labelling:
\begin{equation*}
	\begin{tikzcd}[row sep=small, column sep=small]
		1 \arrow[r, dash] & 2 \arrow[r, dash] & 3 \arrow[r, dash] \arrow[d, dash] & 4 \arrow[r, dash] & 5 \arrow[r, dash] & 6 \\
		& & 7 & &
	\end{tikzcd}.
\end{equation*} 
Let $j$ be a vertex of $Q$. To prove 1(a) it is enough to show that $m_Q(j)$ is never $4$. If $j$ is a sink, then $P(j)=S(j)$, so $m_Q(j)=0$. If $j$ is a source, then $I(j)=S(j)$ and since every $\tau$-orbit has $9$ indecomposable modules we get that $m_Q(j)=8$. Let $d$ be the distance in the underlying graph $E_7$. Assume that $j$ has degree two which is not a source nor a sink. So there exists a unique vertex $b_1$ such that $d(b_1, 3)=d(j, 3)+1$ and either $j\rightarrow b_1\in Q_1$ or $b_1\rightarrow j\in Q_1$. Assume that $j\rightarrow b_1\in Q_1$. So the full subquiver of $Q$ supported on the connected component of $b_1$ in $Q\setminus \{j\}$ has at most two vertices. 
By Lemma \ref{reductionfortypeE}, $m_Q(j)\leq 1+1=2$. If $b_1\rightarrow j\in Q_1$, then $j\rightarrow b_1\in (Q^{op})_1$ and by Lemma \ref{reductionfortypeE}, it follows that $m_{Q^{op}}(j)\leq 2$. By 
Lemma \ref{reductiontoOp}, we get $m_Q(j)\geq 8-2=6$. So again, $m_Q(j)\neq 4$.
It remains to consider the case $j=3$. If $j$ is a source or a sink, then we have seen already that $m_Q(j)\neq 4$. Suppose therefore that $j$ is neither a source nor a sink. Since $j$ is trivalent, either exactly one arrow starts at $j$ and two arrows end at $j$, or the other way around. These two cases are exchanged by passing to $Q^{op}$, and $m_Q(j)\neq 4$ if and only if $m_{Q^{op}}(j)\neq 4$ by Lemma \ref{reductiontoOp}  and part 1(a). So we may assume without loss of generality that exactly one arrow starts at $j$ and two arrows end at $j$. 
By Lemma \ref{reductionfortypeE}, we obtain that $m_Q(j)\neq 4$. So the claim 1(b) holds.

Assume now that $Q$ is of type $E_8$, where $E_8$ is considered with the following labelling:
\begin{equation*}
	\begin{tikzcd}[row sep=small, column sep=small]
		1 \arrow[r, dash] & 2 \arrow[r, dash] & 3 \arrow[r, dash] \arrow[d, dash] & 4 \arrow[r, dash] & 5 \arrow[r, dash] & 6 \arrow[r, dash]  & 7\\
		& & 8 & &
	\end{tikzcd}.
\end{equation*} 
To show 2(b), we need to show that $m_Q(j)\neq 7$ for every vertex $j$.
By part 2(a) and Lemma \ref{reductiontoOp}, $m_Q(j)=14-m_{Q^{op}}(j)$ for every vertex $j$ of $Q$. So, $m_Q(j)\neq 7$ if and only $m_{Q^{op}}(j)\neq 7$. Let $j\in Q_0$. Again if $j$ is a source or a sink, then $m_Q(j)\in \{0, 14\}$. So, we get that $m_Q(j)\neq 7$ for $j\in \{1, 7, 8\}$. For $j\in \{2, 4, 5, 6\}$ using the same strategy as for $E_7$ we get that if $j$ is not a source nor a sink, then $m_Q(j)\leq 3$ or $m_Q(j)\geq 14-3=11$. For $j=3$, we can again assume without loss of generality that exactly one arrow starts at $j$, say $j\rightarrow b_1$ and two arrows end at $j$. The possible full subquivers of $Q$ supported on the connected component of $b_1$ in $Q\setminus \{3\}$ are the subquivers with vertices $\{1, 2\}$ or $\{8\}$ or $\{4, 5, 6, 7\}$. By Lemma \ref{reductionfortypeE}, we obtain that $m_Q(j)\leq 4$. So the claim 2(b) holds.
\end{proof}

\begin{Cor}\label{RsdfortypeE78}
	Let $Q$ be a quiver of type $E_7$ or $E_8$ with an arbitrary orientation. The path algebra $kQ$ is not Ringel self-dual for any partial ordering of the simples.
\end{Cor}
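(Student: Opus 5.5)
We argue by contradiction. Suppose $(kQ, Q_0, \leq)$ is Ringel self-dual for some partial order $\leq$ on $Q_0$, and let $T$ be its basic characteristic tilting module. Since the underlying graphs $E_7$ and $E_8$ have trivial automorphism group, the automorphism $\sigma$ provided by Lemma \ref{QuiveraddT}(b) is the identity. Hence $\sigma^2=\id$, and part (i) of that lemma gives $p_i=q_i$ for every vertex $i$. Recall that $T_i=\tau^{-p_i}P(i)$ and that $\tau^{-p_i-q_i}P(i)$ is the injective module of the orbit. The length of the $\tau$-orbit of $P(i)$ is therefore $l_i=2p_i+1$. By Theorem \ref{tauorbitsE7E8}, parts 1(a) and 2(a), this length is $9$ in type $E_7$ and $15$ in type $E_8$. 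So $p_i=4$ for all $i$ in type $E_7$, and $p_i=7$ for all $i$ in type $E_8$. We conclude that $T\cong\bigoplus_{i}\tau^{-4}P(i)$ in type $E_7$ and $T\cong\bigoplus_i\tau^{-7}P(i)$ in type $E_8$; this is the central slice of the Auslander--Reiten quiver.

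Next we show that $T$ must have a simple direct summand. Choose a vertex $\l$ that is minimal with respect to $\leq$ (such a vertex exists since $Q_0$ is finite). Then $\St(\l)$ is the largest quotient of $P(\l)$ whose composition factors $S(\mu)$ satisfy $\mu\leq\l$, which forces $\mu=\l$. As $[\St(\l):S(\l)]=1$, we get $\St(\l)=S(\l)$. The same argument gives $\Cs(\l)=S(\l)$. Hence $S(\l)\in\mathcal{F}(\St)\cap\mathcal{F}(\Cs)=\add T$. Alternatively, one can read this off the exact sequence $0\to\St(\l)\to T(\l)\to X(\l)\to 0$, where $X(\l)\in\mathcal{F}(\{\St(\mu)\colon \mu<\l\})=0$, so $T(\l)=\St(\l)=S(\l)$.

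Combining the two steps, $S(\l)\cong\tau^{-4}P(i)$ (type $E_7$) or $S(\l)\cong\tau^{-7}P(i)$ (type $E_8$) for some vertex $i$. This contradicts Theorem \ref{tauorbitsE7E8}, parts 1(b) and 2(b), which state that these modules are never simple.

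The main work is already contained in Theorem \ref{tauorbitsE7E8}. The only step requiring some care is the first one: we must make sure that a trivial $\sigma$ really pins every $p_i$ to exactly half of the orbit length minus one. This follows directly from Lemma \ref{QuiveraddT}(b)(i) together with the identity $l_i=p_i+q_i+1$, which comes from the description of each $\tau$-orbit as $\{P(i),\dots,T_i,\dots,I_i\}$ obtained from $T$ being a complete slice (Proposition \ref{counting}).
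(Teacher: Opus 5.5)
Your proposal is correct and follows the paper's proof essentially step for step. Because the automorphism group is trivial, $\sigma=\id$ in Lemma \ref{QuiveraddT}, which pins $T$ to the central slice $\tau^{-4}(kQ)$ (resp.\ $\tau^{-7}(kQ)$). A minimal weight $\lambda$ then gives a simple summand $T(\lambda)=S(\lambda)$, contradicting Theorem \ref{tauorbitsE7E8}(1b)/(2b). You only make explicit what the paper leaves implicit, namely $l_i=p_i+q_i+1=2p_i+1$ and why $T(\lambda)$ is simple.
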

\begin{proof}
	Since $E_7$ and $E_8$ have trivial automorphism groups, we obtain that by Lemma \ref{QuiveraddT} and Theorem \ref{tauorbitsE7E8} that if $(kQ, Q_0, \leq)$ is Ringel self-dual for some partial order $\leq$, then the characteristic tilting module satisfies $T=\tau^{-4}(kQ)$ or $T=\tau^{-7}(kQ)$, the first occuring for $E_7$ and the second for $E_8$. Any characteristic tilting module contains at least one simple direct summand, so by Theorem \ref{tauorbitsE7E8} the result follows.
\end{proof}

\begin{Remark}
	Observe that in Corollary \ref{cor4dot5} we used Lemma \ref{sizetauorbitsADE}, but the same result could be deduced directly using Lemma \ref{QuiveraddT}. Similarly, to deduce that hereditary algebras with underlying Dynkin diagram of the form $E_7$ or $E_8$ are not Ringel self-dual we could have only used Lemma \ref{QuiveraddT}.
\end{Remark}

\subsection{Examples of Ringel self-dual hereditary algebras and classification}

The goal of this subsection is to illustrate the existence of orientations in the Dynkin diagrams $D_{2m}$ and $A_{4m+1}$.  

\subsubsection{Type A}

We start by showing that Ringel self-duality can happen in type $A$. 

\begin{Prop}\label{Proptauorbitsprediction}
	Let $n=4m+1$ for some $m\in \mathbb{N}$ and fix an orientation on $A_n$ and a partial order $\leq$ on $Q_0=\{1, \ldots, n\}$ so that $(kA_n, Q_0, \leq)$ is Ringel self-dual. Then every $\tau$-orbit contains exactly $2m+1$ indecomposable modules and the characteristic tilting module of $kA_n$ is $\tau^{-m}(kA_n)$.
\end{Prop}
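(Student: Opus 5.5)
The plan has two steps. First I compute the common orbit length, and then I show that every summand of $T$ sits exactly $m$ steps from the projective end of its $\tau$-orbit. Both graph automorphisms of $A_n$ (the identity and the flip $i\mapsto n+1-i$) are involutions. So the automorphism $\sigma$ produced by Lemma \ref{QuiveraddT}(b) satisfies $\sigma^2=\id$, and Lemma \ref{QuiveraddT}(b)(iv) says that all $\tau$-orbits have a common length $l$. Since there are $n$ orbits and $n(n+1)/2$ indecomposables, $nl=n(n+1)/2$, so $l=(n+1)/2=2m+1$. This is the first assertion.

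For the second assertion, write $T_i=\tau^{-p_i}P(i)$ as in Lemma \ref{QuiveraddT}. Every orbit has length $2m+1$, so $p_i+q_i=2m$ for all $i$. By Lemma \ref{QuiveraddT}(b)(i), $q_i=p_{\sigma^{-1}(i)}=p_{\sigma(i)}$, hence
\[p_i+p_{\sigma(i)}=2m \quad\text{for all } i\in Q_0 .\]
The goal is $p_i=m$ for every $i$; then $T\cong\bigoplus_i\tau^{-m}P(i)=\tau^{-m}(kA_n)$. If $\sigma=\id$ this is immediate, since $2p_i=2m$. It is also immediate at any fixed point of $\sigma$, in particular at the middle vertex $c=2m+1$ when $\sigma$ is the flip. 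So $p_c=m$ in every case.

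The remaining, and main, case is $\sigma$ the flip. Here I would propagate from the centre using Claim \ref{claim3} together with Lemma \ref{QuiveraddT}(b)(ii),(iii). Fix an edge $\{i,i+1\}$ with $i\geq c$ and its mirror edge $\{n-i,n+1-i\}$. By (ii), if $\sigma$ preserves the orientation of the edge, then $p_i=p_{i+1}$ and also $p_{n-i}=p_{n+1-i}$. By (iii), if $\sigma$ reverses the orientation, then $p$ increases by one in the direction of the arrow on both edges.

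Consider first an edge where $\sigma$ reverses orientation, say the arrow is $i\to i+1$ with mirror arrow $n-i\to n+1-i$, so $p_{i+1}=p_i+1$ and $p_{n+1-i}=p_{n-i}+1$. Adding, the left-hand side $p_{i+1}+p_{n-i}$ and the right-hand side $p_i+p_{n+1-i}+2$ are both of the form $p_a+p_{\sigma(a)}$, but the second exceeds the first by $2$. This contradicts the displayed identity, and the same happens if the arrow is $i+1\to i$. So no edge can be reversed by the flip; every edge is preserved, case (ii) always applies, and $p$ is constant along the connected path. Hence $p_i=p_c=m$ for all $i$.

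The step I expect to be delicate is this bookkeeping of indices under the flip, in particular the edges adjacent to $c$, whose mirror shares the vertex $c$. I would double-check those edges directly against Claim \ref{claim3}. If the argument above breaks there, my fallback is to compare the orbit endpoints given by Lemma \ref{sizetauorbitsADE} ($\tau^{-2m}P(i)\cong I(n+1-i)$) with $\Psi T_i\cong I_{\sigma(i)}$ to pin down $\sigma$ directly.
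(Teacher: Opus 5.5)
Your first step is fine and agrees with the paper: the common orbit length is $2m+1$, hence $p_i+p_{\sigma(i)}=2m$ and $p_c=m$, and the case $\sigma=\mathrm{id}$ is immediate. The gap is in the flip case: your argument that $\rho\colon i\mapsto n+1-i$ reverses no edge is wrong.

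From (iii) you correctly obtain $p_{i+1}=p_i+1$ and $p_{n+1-i}=p_{n-i}+1$. Adding them gives $p_{i+1}+p_{n+1-i}=p_i+p_{n-i}+2$, and neither side has the form $p_a+p_{\rho(a)}$, since $\rho(i+1)=n-i$ and $\rho(i)=n+1-i$. Grouped correctly, $p_{i+1}+p_{n-i}=(p_i+1)+(p_{n+1-i}-1)=p_i+p_{n+1-i}$, which just reads $2m=2m$.

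Better index bookkeeping will not repair this. The relation $p_{\rho(a)}=2m-p_a$ says exactly that mirrored edges carry the same increment of $p$, read from left to right. Lemma \ref{QuiveraddT}(b)(ii) and (iii) produce exactly that, for preserved and for reversed mirrored pairs respectively. So the constraints you use are mutually consistent for every orientation. For example, take $n=5$ with arrows $2\to 1$, $2\to 3$, $3\to 4$, $5\to 4$, so every edge is reversed by $\rho$. The values $(p_1,\dots,p_5)=(1,0,1,2,1)$ satisfy (i)--(iii), $p_a+p_{\rho(a)}=2$ and $0\le p_i\le 2$, yet they are not constant. This orientation is ruled out only because its $\tau$-orbits do not all have length $3$. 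That is information about the actual shape of the Auslander--Reiten quiver, which your argument never uses beyond the count $l=2m+1$.

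The missing idea is that equal orbit lengths force $\rho$ to preserve the orientation of $Q$; this is how the paper concludes.
\begin{itemize}
\item Since every orbit has length $2m+1$, Lemma \ref{sizetauorbitsADE} gives $\tau^{-2m}P(i)\cong I(\rho(i))$ for all $i$.
\item An arrow $i\to j$ gives an irreducible map $P(j)\to P(i)$, hence an irreducible map $\tau^{-2m}P(j)\to\tau^{-2m}P(i)$, that is, $I(\rho(j))\to I(\rho(i))$.
\item Since $\Gamma(A\inj)=Q^{op}$, this means $\rho(i)\to\rho(j)\in Q_1$.
\end{itemize}
Thus both automorphisms of $A_n$ preserve every arrow. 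Lemma \ref{QuiveraddT}(b)(ii) then applies on every edge whichever $\sigma$ occurs, so $p$ is constant and $p_i=p_c=m$.

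Your fallback cites the right lemma but aims it at the wrong target. Comparing orbit endpoints with $\Psi T_i\cong I_{\sigma(i)}$ says nothing about orientation. What must be extracted is that $\rho$ is an automorphism of the quiver $Q$, not merely of its underlying graph.
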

\begin{proof} Let $T$ be the characteristic tilting module and denote by $Q$ the underlying quiver of $kA_n$. For every $i\in Q_0$ there are numbers $p_i, q_i\in \mathbb{N}_0$ such that $\tau^{-p_i}P(i)\in \add T$ and $\tau^{-p_i-q_i}P(i)$ is a non-zero injective module. We claim that $p_i=q_i$ for every $i\in Q_0$.
	$A_n$ admits two graph automorphisms, namely the identity and the reflection $\rho$ given by $\rho(i)=n-i+1$. By Lemma \ref{QuiveraddT}, if both of these two graph automorphisms preserve the given orientation of $A_n$ then since $A_n$ is connected we would obtain that $p_i=p_j$ for all $i, j\in Q_0$ and consequently $p_i=q_i$ for all $i\in Q_0$. The identity clearly preserves the given orientation of $A_n$. By Lemma \ref{QuiveraddT}(b)(iv),
	every $\tau$-orbit contains exactly $l$ indecomposables, for some $l$.
	Since $A$ has $\frac{n(n+1)}{2}$ indecomposable modules we have that $nl=\frac{n(n+1)}{2}$ and so $l=\frac{n+1}{2}=2m+1$ and $2m+1=l=p_i+q_i+1$ for all $i\in Q_0$.
	
	Assume that $i\rightarrow j\in Q_1$. So there exists an irreducible map $P(j)\rightarrow P(i)$ and so an irreducible map $\tau^{-2m}P(j)\rightarrow \tau^{-2m}P(i)$. Since all $\tau$-orbits have the same length, Lemma \ref{sizetauorbitsADE} yields that this map corresponds to an irreducible map $I(\rho(j))\rightarrow I(\rho(i))$. Since $\Gamma(A\inj)=Q^{op}$ this irreducible map arises from an arrow $\rho(i)\rightarrow \rho(j)\in Q_1$. Thus, $\rho$ preserves the orientation of $Q$ and so it follows by the discussion above that $p_i=q_i=m$ for all $i\in Q_0$. Therefore, $T=\tau^{-m}(kA_n)=\tau^m(D(kA_n))$.
\end{proof}

Let $m\geq 1$ and let $Q$ be the quiver 
\begin{equation}\label{eq:D2m-quiver}
	\begin{tikzcd}[row sep=small, column sep=small]
		1 \ar[r] & \cdots \ar[r] & m+1 \ar[r] & \cdots \ar[r] & 2m+1  & \ar[l] \cdots \ar[l] & 3m+1 \ar[l] & \cdots \ar[l] & \ar[l] 4m+1  \\
	\end{tikzcd}
\end{equation} and let $R$ the path algebra $kQ$. We write $S(i),P(i),I(i)$ for the simple, indecomposable projective and
indecomposable injective $R$-module at vertex $i$, and $\varepsilon_1,\dots,\varepsilon_{4m+1}$ for the
standard basis of $K_0(R)\cong\mathbb Z^{4m+1}$. The indecomposable projective $R$-modules are the uniserial modules
\begin{align*}
	P(i)=\Loe{i \\ i+1 \\ \vdots \\ 2m+1 }\quad (1\le i\le 2m), \quad P(2m+1)=\Loe{2m+1}, \quad P(i)=\Loe{i \\ i-1 \\ \vdots \\ 2m+1 }\quad (2m+2\le i\le 4m+1)
\end{align*}
while the indecomposable injective modules of $R$ are the modules
\begin{align*}
	\quad I(i)=\Loe{1 \\ \vdots \\ i} \quad (1\leq i\leq 2m), \quad 	I(2m+1)=\Loe{1 & & 4m+1 \\ 2 & & 4m \\ \vdots & & \vdots \\ 2m & & 2m+2 \\ & 2m+1 & },\quad I(i)=\Loe{4m+1 \\ \vdots \\ i} \quad (2m+2\le i\le 4m+1).
\end{align*}
In particular, $I(1)=S(1)$ and $I(4m+1)=S(4m+1)$.
We equip $Q_0=\{1,\dots,4m+1\}$ with the partial order $\preceq$ generated by the relations
\begin{align*}i \prec j \quad \text{whenever} \quad  \begin{cases}
		i>j \text{ and } i, j\in \{1, \ldots, m\}\cup \{2m+1, \ldots, 3m+1\} \\
		i<j \text{ and } i, j\in \{m+1, \ldots, 2m+1\}\cup \{3m+2, \ldots, 4m+1\}
	\end{cases}.
\end{align*}
In particular for $m=1$, $\preceq$ is generated by the relations $2\prec 3\prec 5$ and $4\prec 3\prec 1$.

We can see that $(R, Q_0, \preceq)$ is a quasi-hereditary algebra whose standard modules are
\[\St(i)=\begin{cases}
	S(i), \quad m+1\leq i\leq 3m+1\\
	P(i), \quad 1\leq i\leq m  \text{ or } i\geq 3m+2
\end{cases}. \]
The respective costandard modules are the indecomposable modules with dimension vectors given by
\begin{align*} 
	\dv \Cs(i)=\begin{cases}
		\displaystyle\sum_{j=m+1}^{i} \varepsilon_j	& m+2\leq i \leq 2m\\[1.5em]
		\displaystyle\sum_{j=m+1}^{3m+1} \varepsilon_j	& i=2m+1\\[1.5em]
		\displaystyle\sum_{j=i}^{3m+1} \varepsilon_j	& 2m+2\leq i\leq 3m \\[1.5em]
		\varepsilon_i, & 1\leq i\leq m+1 \text{ or } 3m+1\leq i\leq 4m+1
	\end{cases}.
\end{align*}
In particular, $\Cs(i)$ is simple if and only if $i\notin \{m+2, \ldots, 3m\}.$ 
It follows that $\Cs(i)\in \mathcal{F}(\St)$ for $m+1\leq i\leq 3m+1$ and thus $T(i)=\Cs(i)$ for $m+1\leq i\leq 3m+1$.

The remaining indecomposable direct summands of the characteristic tilting module $T$ of $(R, Q_0, \preceq)$ are the indecomposable modules with dimension vectors given by
\begin{align*}
	\dv T(i)=\begin{cases}
		\displaystyle	\sum_{j=i}^{3m+1} \varepsilon_j & 1\leq i\leq m\\[1.5em]
		\displaystyle	\sum_{j=m+1}^i \varepsilon_j & 3m+2\leq i \leq  4m+1
	\end{cases}.
\end{align*}

We want to show that $(R, Q_0, \preceq)$ is Ringel self-dual. Our strategy is the same as in type $D$: first we confirm that every $\tau$-orbit has $2m+1$ indecomposable modules as Proposition \ref{Proptauorbitsprediction} predicts and then to show that the characteristic tilting module of $R$ is "central" in the Auslander-Reiten quiver of $R$.
To prove both of these statements we will make use of the Coxeter matrix of $R$.

Set $c:=2m+1$ and $N:=4m+1$.
\begin{Lemma}\label{coxetertypeA}
	Let $C$ be the Cartan matrix of $R$. The Coxeter matrix $\Phi=-C^TC^{-1}$ of $R$ in terms of the standard basis $\varepsilon_1, \ldots, \varepsilon_{4m+1}$ is given by
	\[\Phi \varepsilon_k=\varepsilon_{k+1} \ (1\leq k\leq c-2), \quad \Phi\varepsilon_{c-1}=\sum_{j=c}^{N} \varepsilon_j, \quad \Phi\varepsilon_c = -\sum_{j=1}^{N} \varepsilon_j, \quad \Phi\varepsilon_{c+1}= \sum_{j=1}^{c} \varepsilon_j, \quad \Phi\varepsilon_k = \varepsilon_{k-1} \ (c+2\leq k\leq N)\]
	while the inverse of the Coxeter matrix is given by
	\begin{align*}
		\Phi^{-1}\varepsilon_1=-\sum_{j=1}^{c}\varepsilon_j, \quad \Phi^{-1}\varepsilon_k&=\varepsilon_{k-1} \ (2\leq k\leq c-1), \quad \Phi^{-1}\varepsilon_c=\varepsilon_{c-1}+\varepsilon_c+\varepsilon_{c+1}, \\ \Phi^{-1}\varepsilon_k&=\varepsilon_{k+1} \ (c+1\leq k\leq N-1), \quad \Phi^{-1}\varepsilon_N=-\sum_{j=c}^{N}\varepsilon_j.
	\end{align*}
\end{Lemma}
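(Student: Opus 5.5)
The plan is to compute $\Phi$ from the Cartan matrix rather than invert anything blindly, using the identity $\Phi=-C^TC^{-1}$ in the form $\Phi\,\dv P(i)=-\dv I(i)$. Indeed, with the paper's convention the $i$-th column of $C$ is $\dv P(i)$ and the $i$-th column of $C^T$ is $\dv I(i)$. Hence $C^{-1}\dv P(i)=\varepsilon_i$ gives $\Phi\,\dv P(i)=-C^T\varepsilon_i=-\dv I(i)$. Since the $\dv P(i)$ form a basis of $\mathbb{Z}^N$, these $N$ identities determine $\Phi$. The same identities rewritten as $\Phi^{-1}\dv I(i)=-\dv P(i)$ determine $\Phi^{-1}$.

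Concretely, I would first record the dimension vectors from the listed modules:
\begin{itemize}
\item $\dv P(i)=\sum_{j=i}^{c}\varepsilon_j$ for $i\le c$, and $\dv P(i)=\sum_{j=c}^{i}\varepsilon_j$ for $i\ge c$;
\item $\dv I(i)=\sum_{j=1}^{i}\varepsilon_j$ for $i<c$, $\dv I(c)=\sum_{j=1}^N\varepsilon_j$, and $\dv I(i)=\sum_{j=i}^{N}\varepsilon_j$ for $i>c$.
\end{itemize}
The simples can then be written as differences of projectives: $\varepsilon_i=\dv P(i)-\dv P(i+1)$ for $i<c$, $\varepsilon_c=\dv P(c)$, and $\varepsilon_i=\dv P(i)-\dv P(i-1)$ for $i>c$. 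Applying $\Phi$ gives
\[\Phi\varepsilon_i=\dv I(i+1)-\dv I(i)\ (i<c),\qquad \Phi\varepsilon_c=-\dv I(c),\qquad \Phi\varepsilon_i=\dv I(i-1)-\dv I(i)\ (i>c).\]
The case analysis is then routine:
\begin{itemize}
\item for $k\le c-2$ the difference is $\varepsilon_{k+1}$;
\item for $k=c-1$ it is $\dv I(c)-\dv I(c-1)=\sum_{j=c}^N\varepsilon_j$;
\item $\Phi\varepsilon_c=-\sum_{j=1}^N\varepsilon_j$;
\item for $k=c+1$ it is $\dv I(c)-\dv I(c+1)=\sum_{j=1}^{c}\varepsilon_j$;
\item for $k\ge c+2$ it is $\varepsilon_{k-1}$.
\end{itemize}
This matches the statement.

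For $\Phi^{-1}$ I would dualise. Write $\varepsilon_i$ in terms of injectives:
\begin{itemize}
\item $\varepsilon_1=\dv I(1)$, and $\varepsilon_i=\dv I(i)-\dv I(i-1)$ for $2\le i\le c-1$;
\item $\varepsilon_N=\dv I(N)$, and $\varepsilon_i=\dv I(i)-\dv I(i+1)$ for $c+1\le i\le N-1$;
\item $\varepsilon_c=\dv I(c)-\dv I(c-1)-\dv I(c+1)$.
\end{itemize}
Then apply $\Phi^{-1}\dv I(i)=-\dv P(i)$ in each case. For instance,
\[\Phi^{-1}\varepsilon_c=-\dv P(c)+\dv P(c-1)+\dv P(c+1)=\varepsilon_{c-1}+\varepsilon_c+\varepsilon_{c+1},\]
and the other cases are similar single-line differences, e.g. $\Phi^{-1}\varepsilon_1=-\dv P(1)=-\sum_{j=1}^c\varepsilon_j$.

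As a sanity check, composing the two formulas on a couple of basis vectors should return the identity, e.g. $\Phi^{-1}\Phi\varepsilon_c=\varepsilon_c$.

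The only step needing care is the middle vertex $c$, where both arms meet. There the injective $I(c)$ is not uniserial and the decomposition of $\varepsilon_c$ in terms of injectives has three terms, so the sign bookkeeping at indices $c-1,c,c+1$ is the main place an error could creep in. Everything else is telescoping along the two linearly oriented arms.
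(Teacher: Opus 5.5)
Your proposal is correct and follows essentially the paper's route: writing $\varepsilon_i$ as differences of the vectors $\dv P(j)$ is exactly the paper's computation of the columns of $C^{-1}$, and applying $-C^T$ gives the same case formulas. Your $\Phi\varepsilon_c=-\dv I(c)$ has the correct sign, whereas the paper's displayed equation drops the minus in that case. For $\Phi^{-1}$, the paper simply checks the formulas by applying $\Phi$, while you derive them from the dual identity $\Phi^{-1}\dv I(i)=-\dv P(i)$; both work.
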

\begin{proof}
	By reading off the Loewy series of the $P(j)$, $(i, j)$-entry of the Cartan matrix $C$ is given by
	\[
	C_{ij}=[P(j):S(i)]= \begin{cases}
		1, & j\leq i\leq c \ \text{ or }\ c\leq i\leq j,\\
		0, & \text{otherwise,}
	\end{cases}
	\]
	In particular, the $j$-th column of $C$ is $\dv P(j)$ and the $j$-th column of $C^T$ is $\dv I(j)$. It is clear that $C^{-1}\varepsilon_c=\varepsilon_c$. Assume that $j\geq c$. By induction with base case $j=c$ we get from $\varepsilon_{j+1}=C^{-1}C\varepsilon_{j+1}=C^{-1}(\varepsilon_{j+1})+\sum_{k=c}^j C^{-1}\varepsilon_k$ that the ${j+1}$-th column of $C^{-1}$ is $C^{-1}\varepsilon_{j+1}=\varepsilon_{j+1}-\varepsilon_j$. Similarly, the $j$-th column of $C^{-1}$ is $\varepsilon_j-\varepsilon_{j+1}$ whenever $j<c$. 
	Thus \begin{align}
		\Phi \varepsilon_j=\begin{cases}
			-C^T(\varepsilon_j-\varepsilon_{j+1}) & j<c \\
			-C^T \varepsilon_c & j=c \\
			-C^T(\varepsilon_j-\varepsilon_{j-1}) & j>c
		\end{cases}\ =\ \begin{cases}
			\dv I(j+1)-\dv I(j),  & j<c\\
			\dv I(c) & j=c\\
			\dv I(j-1)-\dv I(j) & j>c
		\end{cases}. \label{eqinjectivescoxeter}
	\end{align} So the statement about the Coxeter matrix follows by reading off the Loewey series of the injectives together with \eqref{eqinjectivescoxeter}.
	The statement about the inverse of the Coxeter matrix is
	checked directly by applying $\Phi$ to the right-hand sides.
\end{proof}

Given $M\in R\m$, write $m(M):=\min \{r\geq 0\colon \tau^r M\in R\proj\}$.

\begin{Lemma} The following assertions hold for the algebra $R$. \label{tauorbitsofsimplestypeA}
	\begin{enumerate}[(a)]
		\item Let $i\in Q_0$. Then $m(S(i))=\begin{cases}
			c-i, & 1\leq i\leq c-1\\
			i-c & c\leq i\leq N	
		\end{cases}.$ In particular, $m(S(i))\leq c-1$.
		\item Every $\tau$-orbit contains exactly $c=2m+1$ indecomposable $R$-modules. 
	\end{enumerate}
\end{Lemma}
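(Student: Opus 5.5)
The plan for (a) is to compute the $\tau$-orbit of each simple module explicitly with the Coxeter matrix $\Phi$ from Lemma \ref{coxetertypeA}. Since $\dv\tau M=\Phi\dv M$ whenever $M$ has no nonzero maps to projectives, and indecomposables are determined by their dimension vectors, iterating $\Phi$ on $\varepsilon_i$ gives the candidate dimension vectors of $\tau^r S(i)$. We stop at the first $r$ where the vector is that of an indecomposable projective.

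First take $1\le i\le c-1$. The formula $\Phi\varepsilon_k=\varepsilon_{k+1}$ for $k\le c-2$ gives $\Phi^r\varepsilon_i=\varepsilon_{i+r}$ for $i+r\le c-1$. We must check at each step that $S(i+r)$, for $i+r<c$, is not projective: vertex $i+r$ has the arrow $i+r\to i+r+1$, so it is not a sink. Hence $\tau S(i+r)$ is nonzero, and it has dimension vector $\varepsilon_{i+r+1}$. At $r=c-1-i$ we reach $S(c-1)$, and $\Phi\varepsilon_{c-1}=\sum_{j=c}^N\varepsilon_j=\dv P(N)$. So $\tau S(c-1)=P(N)$ and $m(S(i))=c-i$. (One can also check $\tau S(c-1)$ directly, using $0\to P(c)\to P(c-1)\to S(c-1)\to 0$ and $\tau S(c-1)=\ker(I(c)\to I(c-1))$, which is the uniserial module with composition factors $4m+1,\dots,2m+1$, i.e. $P(N)$.) The case $c+1\le i\le N$ is symmetric, using $\Phi\varepsilon_k=\varepsilon_{k-1}$ and $\Phi\varepsilon_{c+1}=\sum_{j=1}^c\varepsilon_j=\dv P(1)$. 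This gives $m(S(i))=i-c$, and $S(c)=P(c)$ gives $m=0$. The bound $m(S(i))\le c-1$ is then immediate.

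For (b), I would first show that $\tau^c$ vanishes on all modules. By (a), $\tau^{m(S(i))}S(i)$ is projective, so $\tau^{m(S(i))+1}S(i)=0$, and $m(S(i))+1\le c$. Hence $\tau^c S=0$ for every simple $S$, and Lemma \ref{tauvanishingonsimples} gives $\tau^c M=0$ for all $M$. Each $\tau$-orbit is $\{P,\tau^{-1}P,\dots\}$ with $P$ projective, so applying this to its last (injective) member shows every orbit has at most $c$ elements.

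Then I count. There are $n=N$ orbits and $N(N+1)/2=Nc$ indecomposables in total. Since each orbit has at most $c$ modules, every orbit has exactly $c$. The main obstacle is the careful verification in (a) that the iteration of $\Phi$ really tracks $\tau$, i.e. that no intermediate simple is projective. I handle this by the sink check above.
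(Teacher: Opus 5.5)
Your proposal is correct and follows essentially the same route as the paper. You iterate the Coxeter matrix of Lemma \ref{coxetertypeA} along the two arms to get $m(S(i))$, then use Lemma \ref{tauvanishingonsimples} to show each $\tau$-orbit has at most $c$ elements, and the count $N(N+1)/2=Nc$ forces equality. Your explicit check that the intermediate simples $S(i+r)$ are non-projective, so that $\dv\tau=\Phi\,\dv$ applies at each step, fills in a detail the paper leaves implicit.
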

\begin{proof}
	The module $S(c)$ is projective so $m(S(c))=0$.  By Lemma \ref{coxetertypeA}, $\Phi \varepsilon_{c-1}=\varepsilon_c+\cdots+\varepsilon_N$ and thus $\dv \tau S(c-1)=\dv P(N)$, that is, $m(S(c-1))=1$.  Also $\Phi \varepsilon_{c+1}=\sum_{j=1}^{c} \varepsilon_j=\dv P(1)$, that is $m(S(c+1))=1$.
	By Lemma \ref{coxetertypeA},
	$\dv \tau S(k)=\dv S(k+1)$ for $1\leq k\leq c-2$ and $\dv \tau S(k)=S(k-1)$ for $c+2\leq k\leq N$. Hence \[m(S(k))=\begin{cases}
		m(S(k+1))+1, & 1\leq k\leq c-2 \\
		m(S(k-1))+1, & c+2\leq k\leq N
	\end{cases}.\] By induction, we get that $m(S(k))=c-k$ for $1\leq k\leq c-1$ and $m(S(k))=k-c$ for $c+1\leq k\leq N$. So (a) holds.
	
	By part (a), $\tau^c S=0$ for every simple module $S$. By Lemma \ref{tauvanishingonsimples}, $\tau^c=0$. So each $\tau$-orbit contains at most $c$ indecomposable modules, namely $I(i), \tau I(i), \ldots, \tau^{c-1} I(i)$ for $i\in Q_0$. Since $R$ contains exactly $(4m+1)(2m+1)=cN$ indecomposable modules, by setting $o_i$ the number of indecomposable modules in the $\tau$-orbit of $I(i)$ we get the following inequality
	\[cN = \sum_{i=1}^{N} o_i\leq \sum_{i=1}^{N} c = cN.\] Since $o_i\geq 1$ we get that $o_i=c$ for all $i=1, \ldots, N$. So (b) holds.
\end{proof}

\begin{Theorem}\label{thm4dot20}
	The quasi-hereditary algebra $(R, Q_0, \preceq)$ is Ringel self-dual.
\end{Theorem}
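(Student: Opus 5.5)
The plan is to apply Theorem \ref{maintheoremoneimplication}(c) with $n=m$: it suffices to show $T\cong\tau^{-m}(R)\cong\tau^{m}(DR)$. By Lemma \ref{tauorbitsofsimplestypeA}(b) every $\tau$-orbit has exactly $c=2m+1$ indecomposables. Each orbit starts at some $P(i)$ and ends at an injective, so $\tau^{-2m}P(i)$ is injective. Consequently $\tau^{-m}(R)$ and $\tau^{m}(DR)$ are the same module, namely the middle column of the Auslander–Reiten quiver. It therefore remains to prove only $T\cong\tau^{-m}(R)$, i.e.\ $\tau^{-m}P(i)\in\add T$ for every $i$. Since both modules are basic with $N$ summands, this amounts to showing that every summand $T(i)$ lies in the middle column. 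Equivalently, $m(T(i))=m$ for all $i$, where $m(-)$ is the distance to the projective end as in Lemma \ref{tauorbitsofsimplestypeA}.

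I will compute $m(T(i))$ using dimension vectors and the Coxeter matrix of Lemma \ref{coxetertypeA}. We have $\dv\tau X=\Phi\dv X$ for non-projective indecomposable $X$, and indecomposables are determined by dimension vectors. So it suffices to check, for each listed $\dv T(i)$, that $\Phi^{m}\dv T(i)$ is the dimension vector of a projective and that $\Phi^{m-1}\dv T(i)$ is not. The summands split into the families given explicitly in the statement.
\begin{itemize}
\item The simples $T(i)=S(i)$ for $i\in\{m+1,3m+1\}$ and $T(2m+1)=\Cs(2m+1)$. Here Lemma \ref{tauorbitsofsimplestypeA}(a) gives $m(S(m+1))=c-(m+1)=m$ and $m(S(3m+1))=m$ directly.
\item The intervals $\sum_{j=m+1}^{i}\varepsilon_j$ for $m+2\le i\le 2m$ and $\sum_{j=i}^{3m+1}\varepsilon_j$ for $2m+2\le i\le 3m$.
\item The intervals $\sum_{j=i}^{3m+1}\varepsilon_j$ for $1\le i\le m$ and $\sum_{j=m+1}^{i}\varepsilon_j$ for $3m+2\le i\le N$.
\end{itemize}
All of these are interval vectors $[a,b]$. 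The formulas for $\Phi$ on $\varepsilon_k$ act as a shift on each arm plus a correction at $c-1$, $c$, $c+1$, so I expect a clean rule. Roughly, an interval $[a,b]$ with $a\le c\le b$ is moved to $[a+1,b-1]$ when it is not projective. Intervals on one side only shift toward $c$, picking up a whole arm when they reach $c\pm1$.

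The main obstacle is making this action of $\Phi$ on intervals precise. I will derive it by summing the columns of $\Phi$ in Lemma \ref{coxetertypeA}, with a case split according to whether the interval contains $c-1$, $c$, or $c+1$. Then I will iterate it to show that $m$ applications land exactly on $\dv P(j)$ for a suitable $j$. The projectives are the intervals $[i,c]$ and $[c,i]$.

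As a simpler alternative, I could avoid explicit iteration by exhibiting, for each $i$, a projective $P(j)$ with $\dv\tau^{-m}P(j)=\dv T(i)$ using $\Phi^{-1}$. By Lemma \ref{tauorbitsofsimplestypeA}(b), $\tau^{-m}P(j)\neq0$, which is what allows us to use $\Phi^{-1}$ on the whole path. A counting argument then finishes the matching. Once $T\cong\tau^{-m}(R)\cong\tau^m(DR)$ is established, Theorem \ref{maintheoremoneimplication}(c) gives Ringel self-duality.
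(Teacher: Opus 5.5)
Your proposal is correct, and its outer structure matches the paper's. You apply Theorem \ref{maintheoremoneimplication}(c) with $n=m$, use Lemma \ref{tauorbitsofsimplestypeA}(b) to identify $\tau^{-m}(R)$ with $\tau^{m}(DR)$, and reduce to showing $m(T(i))=m$ for all $i$. The genuine difference is in how you locate the summands.

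\textbf{The paper's route.} It uses a single Coxeter computation plus a squeeze:
\begin{itemize}
\item[(1)] every $T(i)$ surjects onto $S(m+1)$ or $S(3m+1)$, so $m(T(i))\le m$ by Lemma \ref{taumeasure}(a);
\item[(2)] $\Hom_R(T(2m+1),T(i))\neq 0$ for all $i$, so $m(T(2m+1))\le m(T(i))$;
\item[(3)] $\Phi^{-m}\varepsilon_c=\sum_{k=m+1}^{3m+1}\varepsilon_k$, so $m(T(2m+1))=m$.
\end{itemize}

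\textbf{Your route.} You track each summand individually. Write $[a,b]=\sum_{j=a}^{b}\varepsilon_j$. The rules you anticipate do follow from Lemma \ref{coxetertypeA}:
\begin{itemize}
\item[(i)] $\Phi[a,b]=[a+1,b-1]$ if $a<c<b$;
\item[(ii)] $\Phi[a,c-1]=[a+1,N]$ and $\Phi[c+1,b]=[1,b-1]$;
\item[(iii)] plain shifts $[a+1,b+1]$ for $b\le c-2$ and $[a-1,b-1]$ for $a\ge c+2$.
\end{itemize}
The projectives are exactly the intervals with an endpoint at $c$. Iterating these rules, every family reaches a projective after exactly $m$ steps, with none earlier. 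For example, $[i,3m+1]$ shrinks to $[i+m,c]=\dv P(i+m)$ for $1\le i\le m$. For $m+2\le i\le 2m$, the interval $[m+1,i]$ shifts $2m-i$ times, jumps to $[3m+2-i,N]$, and then shrinks to $[c,5m+2-i]=\dv P(5m+2-i)$.

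\textbf{Comparison.} Your route is more laborious, since it needs bookkeeping over five families. In exchange it is fully mechanical and does not need Lemma \ref{taumeasure} or the two Hom/surjection observations. It also produces the explicit matching $T(i)\cong\tau^{-m}P(j)$, i.e.\ the permutation $\sigma$ of Lemma \ref{QuiveraddT}. The paper's route is much shorter.

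\textbf{Two small corrections.}
\begin{itemize}
\item[(a)] Your stated criterion should be that $\Phi^k\dv T(i)$ is not a projective dimension vector for every $k<m$, not only for $k=m-1$. This is because $\dv\tau X=\Phi\dv X$ requires $X$ to be non-projective. The module-tracking iteration you describe does check this, and your backward $\Phi^{-1}$ variant handles it correctly via the orbit length $2m+1$.
\item[(b)] $T(2m+1)=\Cs(2m+1)$ is not simple. Its dimension vector is $[m+1,3m+1]$, which shrinks to $[c,c]=\dv P(c)$ in $m$ steps. This is precisely the paper's one computation.
\end{itemize}
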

\begin{proof} 
	We want to show that $T=\tau^{-m}(R)$. Observe that $T(i)\twoheadrightarrow S(3m+1)$ for all $1\leq i\leq m$ and $2m+1\leq i\leq 3m+1$ and $T(i)\twoheadrightarrow S(m+1)$ for all $i\in \{m+1, \ldots, 2m\}\cup \{3m+2, \ldots, 4m+1\}$.
	By Lemma \ref{tauorbitsofsimplestypeA}, 
	$m(S(3m+1))=m=m(S(m+1))$. Thus it follows by Lemma \ref{taumeasure} that $m(T(i))\leq m$ for all $i\in Q_0$. Observe also that $\Hom_R(T(2m+1), T(i))\neq 0$ for all $i\in Q_0$, thus
	\[m(T(2m+1))\leq m(T(i))\leq m \quad \forall i\in Q_0.\]
	So we need to compute $m(T(c))=m(\Cs(c))$, where $c=2m+1$. 
	
	We proceed by induction on $i$ to show that $\Phi^{-i}(\varepsilon_c)=\displaystyle \sum_{k=c-i}^{k=c+i} \varepsilon_k$ for $0\leq i\leq m$. The case $i=0$ is clear, while $i=1$ follows from Lemma \ref{coxetertypeA}. For $m\geq i>0$, induction and Lemma \ref{coxetertypeA} yields that
	\begin{align*}
		\Phi^{-(i+1)}(\varepsilon_c)&=\Phi^{-1}\left(\sum_{k=c-i}^{c+i}\varepsilon_k \right) =\Phi^{-1}\varepsilon_c+\sum_{k=c-i}^{c-1}\Phi^{-1}\varepsilon_k+ \sum_{k=c+1}^{c+i}\Phi^{-1}\varepsilon_k\\&=\varepsilon_{c-1}+\varepsilon_c+\varepsilon_{c+1}+\sum_{k=c-i}^{c-1}\varepsilon_{k-1}+\sum_{k=c+1}^{c+i}\varepsilon_{k+1}=\sum_{k=c-i-1}^{c+i+1}\varepsilon_k.
	\end{align*}Since $P(c)=S(c)$ we obtain that
	$\dv \tau^{-m}P(c)=\displaystyle \sum_{k=m+1}^{k=3m+1} \varepsilon_k=\dv \Cs(c)$, and consequently $m(\Cs(c))=m$. Thus, we get that $T=\tau^{-m}(R)$. By Lemma \ref{tauorbitsofsimplestypeA}(b), it follows that $T=\tau^m(DR)$. By Theorem \ref{maintheoremoneimplication}, $(R, Q_0, \preceq)$ is Ringel self-dual.
\end{proof}

\begin{Remark}
	The algebra $(R, Q_0, \preceq)$ is Ringel self-dual, but $R$ is not isomorphic to its opposite algebra $R^{op}$.
\end{Remark}

\subsubsection{Type D}

In this subsubsection, for $m\geq 2$ we show that the path algebra of the linearly oriented Dynkin diagram $D_{2m}$   admits a Ringel self-dual quasi-hereditary structure.

Let $m\geq 2$ and let $Q$ be the quiver 
\begin{equation}\label{eq:D2m-quiver}
	\begin{tikzcd}[row sep=small, column sep=small]
		& & & & & & 1 \\
		2m \ar[r] & 2m-1 \ar[r] & \cdots \ar[r] & m \ar[r] & \cdots \ar[r] & 3 \ar[ru]\ar[rd] & \\
		& & & & & & 2
	\end{tikzcd}
\end{equation} and let $R$ be the path algebra $kQ$. We write $S(i),P(i),I(i)$ for the simple, indecomposable projective and
indecomposable injective $R$-module at vertex $i$, and $\varepsilon_1,\dots,\varepsilon_{2m}$ for the
standard basis of $K_0(R)\cong\mathbb Z^{2m}$, so that $\dv M=(\dim M_i)_i$ is a
column vector for $M\in R\m$. As usual, we display the modules  by their Loewey series. The indecomposable projective modules of $R$ are the modules
\begin{align*}
	P(1)&=\Loe{1}, & P(2)&=\Loe{2}, &
	P(j)&=\Loe{ j \\ \vdots \\ 3 \\ 2\ \ 1 } \quad (3\le j\le 2m),
\end{align*}
while the indecomposable injective modules of $R$ are the modules
\begin{align*}
	I(1)&=\Loe{ 2m \\ \vdots \\ 3 \\ 1 }, &
	I(2)&=\Loe{ 2m \\ \vdots \\ 3 \\ 2 }, &
	I(j)&=\Loe{ 2m \\ \vdots \\ j } \quad (3\le j\le 2m).
\end{align*}
In particular $I(2m)=S(2m)$, and $\rad^{\,j-2}P(j)=S(1)\oplus S(2)$ for $j\ge3$.
We equip $Q_0=\{1,\dots,2m\}$ with the partial order $\preceq$ determined by
\begin{equation}\label{eq:order}
	2m \;\succ\; 2m-1 \;\succ\;\cdots\;\succ\; m+2
	\;\succ\; \{1,2\} \;\succ\; 3  \;\succ\;\cdots\;\succ\; m+1,
\end{equation}
where $\{1,2\}$ abbreviates that both $1$ and $2$ satisfy the displayed
relations, while $1$ and $2$ are incomparable to each other. 
Since $R$ is a hereditary algebra, $(R, Q_0, \preceq)$ is a quasi-hereditary algebra. 
With respect to this ordering, the standard modules  are 	\[
\Delta(i)=\begin{cases} S(i), & 1\le i\le m+1,\\[2pt] P(i), & m+2\le i\le 2m,\end{cases}
\]
and $\Delta(1)=P(1)$, $\Delta(2)=P(2)$ since $P(1)=S(1)$ and $P(2)=S(2)$. 
	The costandard modules are
	\begin{align*}
		\nabla(1)=\Loe{ m+1 \\ \vdots \\ 3 \\ 1 },\qquad
		\nabla(2)=\Loe{ m+1 \\ \vdots \\ 3 \\ 2 }, \qquad
		\nabla(j)=\Loe{ m+1 \\ m \\ \vdots \\ j }\ \ (3\le j\le m), \qquad
	\nabla(i)=S(i)\ \ (m+1\le i\le 2m).
	\end{align*}

\begin{Theorem}\label{RingelselfdualityforD}
	The quasi-hereditary algebra $(R, Q_0, \preceq)$ is Ringel self-dual.
\end{Theorem}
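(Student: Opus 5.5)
The plan is to apply Theorem \ref{maintheoremoneimplication}(c): it suffices to show that the characteristic tilting module satisfies $T\cong\tau^{-n}(R)\cong\tau^{n}(DR)$ for a suitable $n$. Here the Coxeter number of $D_{2m}$ is $h=4m-2$. By Lemma \ref{sizetauorbitsADE}, $\sigma$ is the identity in type $D_{2m}$, so every $\tau$-orbit has length $h/2=2m-1$. The expected answer is therefore $n=m-1$. This is consistent with the count $|R\m|=2m(2m-1)=2m\cdot(2m-1)$. Once $T\cong\tau^{-(m-1)}(R)$ is established, the equal orbit lengths $2m-1=2(m-1)+1$ give $T\cong\tau^{m-1}(DR)$ automatically.

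First I describe $T$ explicitly. For $1\le i\le m+1$ we have $\St(i)=S(i)$, and the costandard modules $\Cs(1),\Cs(2),\Cs(j)$ $(3\le j\le m)$ are filtered by these simples. Hence $T(i)=\Cs(i)$ for $i\le m+1$; in particular $T(m+1)=S(m+1)$. For $m+2\le i\le 2m$ I expect $T(i)$ to be the uniserial-type module with dimension vector $\sum_{k=3}^{i}\varepsilon_k+\varepsilon_1+\varepsilon_2$ truncated so that its top is $i$ and its socle lies in $\{m+1,\dots\}$. More concretely, I would guess $\dv T(i)=\sum_{k=m+1}^{i}\varepsilon_k$, which is the module with top $S(i)$ and socle $S(m+1)$. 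This module lies in $\mathcal F(\Cs)$, since it is filtered by the simples $\Cs(k)=S(k)$ for $k\ge m+1$. It also lies in $\mathcal F(\St)$, because it is a quotient of $P(i)$ by a submodule that should be filtered by $\St(\mu)$ with $\mu<i$. It contains $\St(i)$ up to the required filtration. I would verify these two filtration conditions directly.

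Next, following the type $A$ argument in Theorem \ref{thm4dot20}, I compute the Coxeter matrix $\Phi$ of $R$ from the Cartan matrix, or simply read $\tau$ off minimal projective resolutions. From this I derive $m(S(i))$ for the simples. The relevant cases are $m(S(m+1))$, which I expect to equal $m-1$, and the modules $S(i)$ for $i\ge3$, which should satisfy $\tau S(i)=S(i-1)$ for $4\le i$. Lemma \ref{taumeasure} then gives an upper bound. Every $T(i)$ maps nonzero to a module of measure at most $m-1$, or has measure bounded via its top. Moreover, $T(m+1)=S(m+1)$ maps into, or receives a nonzero map from, every $T(i)$. This pins down $m(T(i))\le m-1$, together with a lower bound obtained from a common summand.

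The main obstacle is to show that all summands sit exactly in the column $\tau^{-(m-1)}$. I would do this by computing $\Phi^{-(m-1)}\varepsilon_j$ directly for a single anchor, for example $\tau^{-(m-1)}P(1)$ or $\tau^{-(m-1)}P(2m)$, and matching the result with $\dv T(1)=\dv\Cs(1)$ or $\dv T(2m)$. I would then use the slice property, Proposition \ref{counting} and Lemma \ref{QuiveraddT}(a), applied to the linear orientation: all arrows point toward $1,2$, so every $p_i$ coincides. Alternatively, I would check the dimension vectors $\Phi^{-(m-1)}\dv P(i)$ against $\dv T(i)$ for all $i$ by an induction analogous to that in Theorem \ref{thm4dot20}. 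Since indecomposables are determined by their dimension vectors, this yields $T=\tau^{-(m-1)}(R)$, and Theorem \ref{maintheoremoneimplication} concludes the proof.
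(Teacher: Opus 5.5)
Your top-level strategy is the paper's: prove $T\cong\tau^{-(m-1)}(R)$ and apply Theorem~\ref{maintheoremoneimplication}(c). Several of your ingredients are correct: orbit length $2m-1$ via $\sigma=\id$, $T(i)=\Cs(i)$ for $i\le m+1$, and $m(S(j))=j-2$.

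\textbf{The gap: $T(i)$ for $m+2\le i\le 2m$.} The module with $\dv=\sum_{k=m+1}^{i}\varepsilon_k$ is a proper quotient of $\St(i)=P(i)$, and it is not in $\mathcal F(\St)$. It has $S(i)$ as a composition factor but not $S(1)$. However, the only standard modules with a composition factor $S(k)$, $k\ge m+2$, are the $\St(k')=P(k')$, and each of these contains $S(1)$. Your justification fails because $\mathcal F(\St)$ is closed under kernels of epimorphisms, not under cokernels of monomorphisms. In fact $T(i)$ must contain $\St(i)=P(i)$ as a submodule. Already for $m=2$ your guess is $I(3)$, which lies in $\gen\tau^{-1}T$, whereas the paper's example has $\dv T(4)=\varepsilon_1+\varepsilon_2+2\varepsilon_3+\varepsilon_4$. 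In general $\dv T(i)=\varepsilon_1+\varepsilon_2+2\sum_{k=3}^{m+1}\varepsilon_k+\sum_{k=m+2}^{i}\varepsilon_k=\Phi^{-(m-1)}\dv P(i-m+1)$. So the dimension-vector matching in your last step cannot succeed as set up. You carried over the type $A$ pattern, where the analogous module really is an extension of $P(i)$.

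\textbf{The fallback is circular.} Proposition~\ref{counting} and Lemma~\ref{QuiveraddT} assume that $(R,Q_0,\preceq)$ is already Ringel self-dual, so you cannot use them to get the slice property of $T$ here. Even granting that $T$ is a slice, the orientation of $Q$ alone does not force all $p_i$ to coincide. That conclusion is exactly what Lemma~\ref{QuiveraddT}(b)(ii) extracts from Ringel self-duality.

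\textbf{The missing idea: you never need $T(i)$ for $i\ge m+2$.} Argue as follows.
\begin{enumerate}
\item Every indecomposable summand of $\tau^{-(m-1)}(R)$ is non-zero, since orbits have length $2m-1$, and has $m$-value exactly $m-1$.
\item $m(\St(i))\le m-1$ for all $i$, because each standard module is either projective or $S(i)$ with $3\le i\le m+1$.
\item $m(\Cs(i))\ge m-1$ for all $i$. By \eqref{eq7} and \eqref{eq8}, $\tau^{m-1}\Cs(1)$ and $\tau^{m-1}\Cs(2)$ lie in $\{S(1),S(2)\}$. Next, $\Cs(1)\twoheadrightarrow\Cs(j)$ for $3\le j\le m$, so Lemma~\ref{taumeasure}(a) gives $m(\Cs(j))\ge m-1$. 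Finally $m(S(i))=i-2$ for $i\ge m+1$.
\item Lemma~\ref{taumeasure}(b) then forces $\Ext^1_R(\St(i),\tau^{-(m-1)}(R))=0$ and $\Ext^1_R(\tau^{-(m-1)}(R),\Cs(i))=0$. Hence $\tau^{-(m-1)}(R)\in\mathcal F(\St)\cap\mathcal F(\Cs)=\add T$.
\item Since $\tau^{-(m-1)}(R)$ has $2m$ pairwise non-isomorphic indecomposable summands, it equals $T$. The orbit length gives $\tau^{-(m-1)}(R)\cong\tau^{m-1}(DR)$.
\end{enumerate}
This is precisely the paper's Lemma~\ref{characteristictiltingmoduleofD}, after which Theorem~\ref{maintheoremoneimplication} finishes the proof.
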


The idea of the proof is to apply Theorem \ref{maintheoremoneimplication}. To do that we need to verify that all $\tau$-orbits of $R$ have exactly $2m-1$ indecomposable modules and thus that additive closure of the characteristic tilting module lies exactly at the center of the Auslander-Reiten quiver of $R$.

\begin{Lemma}Let $C$ be the Cartan matrix of $R$. \label{coxeterlemma}
	The Coxeter matrix $\Phi=-C^TC^{-1}$ of $R$ in terms of the standard basis $\varepsilon_1, \ldots, \varepsilon_{2m}$ is given by 
		\[
	\Phi \varepsilon_1=-\varepsilon_1-\sum_{i=3}^{2m}\varepsilon_i,\qquad
	\Phi \varepsilon_2=-\varepsilon_2-\sum_{i=3}^{2m}\varepsilon_i,\qquad
	\Phi \varepsilon_3=\sum_{i=1}^{2m}\varepsilon_i,\qquad
	\Phi \varepsilon_j=\varepsilon_{j-1}\ (4\le j\le 2m),
	\] while the inverse of the Coxeter matrix is given by 
	\[
	\Phi^{-1}\varepsilon_1=\varepsilon_2+\varepsilon_3,\qquad
	\Phi^{-1}\varepsilon_2=\varepsilon_1+\varepsilon_3,\qquad
	\Phi^{-1}\varepsilon_j=\varepsilon_{j+1}\ (3\le j\le 2m-1),\qquad
	\Phi^{-1}\varepsilon_{2m}=-\sum_{i=1}^{2m}\varepsilon_i .\]
\end{Lemma}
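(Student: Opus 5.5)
The plan follows the proof of Lemma \ref{coxetertypeA}: compute the Cartan matrix $C$ from the Loewy series of the projectives, invert it, and use the identity $\Phi\varepsilon_j=-C^T C^{-1}\varepsilon_j$. Since the $j$-th column of $C^T$ is $\dv I(j)$, each $\Phi\varepsilon_j$ becomes an alternating combination of dimension vectors of injectives.

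First, I would determine $C^{-1}$ column by column. Write $C\varepsilon_j=\dv P(j)$. Then $\dv P(1)=\varepsilon_1$, $\dv P(2)=\varepsilon_2$, and $\dv P(j)=\varepsilon_1+\varepsilon_2+\sum_{i=3}^{j}\varepsilon_i$ for $j\geq 3$. This gives
\[
C^{-1}\varepsilon_1=\varepsilon_1,\quad C^{-1}\varepsilon_2=\varepsilon_2,\quad C^{-1}\varepsilon_3=\varepsilon_3-\varepsilon_1-\varepsilon_2,\quad C^{-1}\varepsilon_j=\varepsilon_j-\varepsilon_{j-1}\ (4\le j\le 2m).
\]
Each formula is verified by checking $C(\cdot)=\varepsilon_j$: for $j\geq 4$ we have $\dv P(j)-\dv P(j-1)=\varepsilon_j$, and for $j=3$ we have $\dv P(3)-\varepsilon_1-\varepsilon_2=\varepsilon_3$.

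Second, I would apply $-C^T$, whose columns are $\dv I(j)$. This yields
\[
\Phi\varepsilon_1=-\dv I(1),\qquad \Phi\varepsilon_2=-\dv I(2),\qquad \Phi\varepsilon_3=\dv I(1)+\dv I(2)-\dv I(3),\qquad \Phi\varepsilon_j=\dv I(j-1)-\dv I(j)\ (j\ge 4).
\]
Reading off the Loewy series of the injectives listed above:
\begin{itemize}
\item $\dv I(1)=\varepsilon_1+\sum_{i\ge3}\varepsilon_i$, which gives the formula for $\Phi\varepsilon_1$, and symmetrically for $\Phi\varepsilon_2$;
\item $\dv I(1)+\dv I(2)-\dv I(3)=\varepsilon_1+\varepsilon_2+\sum_{i\geq 3}\varepsilon_i$, using $\dv I(3)=\sum_{i\ge 3}\varepsilon_i$;
\item $\dv I(j-1)-\dv I(j)=\varepsilon_{j-1}$ for $j\geq 4$.
\end{itemize}

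For the inverse, rather than inverting $\Phi$ directly, I would check $\Phi(\text{claimed }\Phi^{-1}\varepsilon_k)=\varepsilon_k$ for each $k$.
\begin{itemize}
\item $\Phi(\varepsilon_2+\varepsilon_3)=-\varepsilon_2-\sum_{i\ge3}\varepsilon_i+\sum_{i\ge1}\varepsilon_i=\varepsilon_1$, and symmetrically for $k=2$.
\item $\Phi\varepsilon_{j+1}=\varepsilon_j$ for $3\le j\le 2m-1$.
\item $\Phi(-\sum_i\varepsilon_i)=\varepsilon_{2m}$. Here the contributions of $\Phi\varepsilon_1$, $\Phi\varepsilon_2$ and $\Phi\varepsilon_3$ combine to $-\sum_{i\ge3}\varepsilon_i$, and adding $\Phi\varepsilon_j=\varepsilon_{j-1}$ for $j\geq 4$ leaves exactly $\varepsilon_{2m}$.
\end{itemize}

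None of these steps is conceptually hard. The only place requiring care is the vertex $3$, where two arrows leave, so $C^{-1}\varepsilon_3$ has two correction terms. The main risk is bookkeeping signs in the $\Phi\varepsilon_3$ and $\Phi^{-1}\varepsilon_{2m}$ computations, and the latter is the one I would double check.
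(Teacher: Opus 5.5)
Your proposal is correct and follows the paper's proof: you compute $C$ from the projectives, find $C^{-1}$ column by column with the two correction terms at vertex $3$, evaluate $-C^TC^{-1}$ using that the columns of $C^T$ are $\dv I(j)$, and check $\Phi^{-1}$ by applying $\Phi$ to the claimed images. The only blemish is a dropped overall sign in your last bullet: the contributions you describe add up to $\Phi(\sum_i\varepsilon_i)=-\varepsilon_{2m}$, and negating this gives $\Phi(-\sum_i\varepsilon_i)=\varepsilon_{2m}$.
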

\begin{proof}
	By reading off the Loewy series of the $P(j)$, $(i, j)$-entry of the Cartan matrix $C$ is given by
	\[
	C_{ij}=\dim_k\Hom_R(P(i), P(j))=[P(j):S(i)]=
	\begin{cases}
		1, & i=j\in\{1,2\},\\
		1, & j\ge 3 \text{ and } i\le j,\\
		0, & \text{otherwise}
	\end{cases}.\] It is not difficult to see that $j$-th column of $C^{-1}$ is $\varepsilon_1$ for $j=1$, $\varepsilon_2$ for $j=2$,
$\varepsilon_3-\varepsilon_1-\varepsilon_2$ for $j=3$ and so by induction $\varepsilon_j-\varepsilon_{j-1}$ for $j\ge4$, that is, the inverse of the Cartan matrix is given by
	\[
(C^{-1})_{ij}=
\begin{cases}
	1, & i=j,\\
	-1, & (i,j)\in\{(1,3),(2,3)\},\\
	-1, & 3\le i\le 2m-1,\ j=i+1,\\
	0, & \text{otherwise.}
\end{cases}.
\]
So the claim about the Coxeter matrix follows by just computing $-C^TC^{-1}$. The statement about the inverse of the Coxeter matrix is
checked directly by applying $\Phi$ to the right-hand sides.
\end{proof}

Observe that for every non-negative number $i\in \{0, \ldots, 2m-2\}$ the following equalities hold:
\begin{align}
	\Phi^{-i} \varepsilon_1=(\Phi^{-1})^i \varepsilon_1=\sum_{k=3}^{i+2} \varepsilon_k +\begin{cases}
		\varepsilon_1, & i \text{ is even}, \\
		\varepsilon_2 & i \text{ is odd}
	\end{cases} \label{eq7}
	\\
	\Phi^{-i} \varepsilon_2=(\Phi^{-1})^i \varepsilon_2=\sum_{k=3}^{i+2} \varepsilon_k+\begin{cases}
		\varepsilon_2, & i \text{ is even}, \\
		\varepsilon_1 & i \text{ is odd}
	\end{cases} \label{eq8}
\end{align} 
In particular, for $i=2m-2$ we get that $\tau^{-i} P(1)=I(1)$ and $\tau^{-i} P(2)=I(2)$. So both the $\tau$-orbits of $P(1)$ and $P(2)$ have exactly $2m-1$ indecomposable $R$-modules. Moreover, this holds true for every $\tau$-orbit.

\begin{Lemma}\label{tauorbitsinspecialcaseD}
	Every $\tau$-orbit contains exactly $2m-1$ indecomposable $R$-modules.
\end{Lemma}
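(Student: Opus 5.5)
We mirror the proof of Lemma \ref{tauorbitsofsimplestypeA}(b): first show that $\tau^{2m-1}$ vanishes on $R\m$, then count. The algebra $R$ has type $D_{2m}$, so it has $2m(2m-1)$ indecomposable modules, distributed over $2m$ $\tau$-orbits. If every orbit has at most $2m-1$ members, then the total count forces every orbit to have exactly $2m-1$ members. Concretely, write $o_i$ for the length of the $\tau$-orbit of $I(i)$. This orbit consists of $I(i),\tau I(i),\ldots$, so $o_i\le 2m-1$ as soon as $\tau^{2m-1}=0$. Then $2m(2m-1)=\sum_i o_i\le 2m(2m-1)$, and equality gives $o_i=2m-1$ for all $i$.

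By Lemma \ref{tauvanishingonsimples}, it suffices to prove $\tau^{2m-1}S(j)=0$ for every $j\in Q_0$. Equivalently, we need $m(S(j))\le 2m-2$, where $m(S(j))=\min\{r\geq 0\colon \tau^r S(j)\in R\proj\}$. I plan to compute these values with the Coxeter matrix of Lemma \ref{coxeterlemma}.

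\textbf{Vertices $1$ and $2$.} These are sinks, since $P(1)=S(1)$ and $P(2)=S(2)$. Hence $m(S(1))=m(S(2))=0$.

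\textbf{Vertex $3$.} We have $\Phi\varepsilon_3=\sum_{i=1}^{2m}\varepsilon_i=\dv P(2m)$. Since $S(3)$ is not projective, $\dv\tau S(3)=\Phi\dv S(3)$. Indecomposables are determined by their dimension vectors, so $\tau S(3)\cong P(2m)$ and $m(S(3))=1$.

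\textbf{Vertices $4\le j\le 2m$.} Here $\Phi\varepsilon_j=\varepsilon_{j-1}$, so $\tau S(j)\cong S(j-1)$ whenever $S(j)$ is non-projective, which it is because $j\ge 3$ is not a sink. Note that formally $\dv\tau M=\Phi\dv M$ requires $\Hom(M,R)=0$. This holds for non-projective indecomposables over a hereditary algebra, since $\tau M\neq 0$ gives $\Hom(M,R)=0$ for such indecomposables; the paper states the formula in exactly that form. Induction then gives $m(S(j))=1+m(S(j-1))=j-2$.

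Combining the three cases, $m(S(j))\le 2m-2$ for all $j$. Therefore $\tau^{2m-1}S(j)=0$ for every simple module, and by Lemma \ref{tauvanishingonsimples} $\tau^{2m-1}=0$ on $R\m$. The counting argument above then completes the proof.

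As an alternative consistency check, one could use \eqref{eq7} and \eqref{eq8}, which already give orbit length $2m-1$ for $P(1)$ and $P(2)$. Lemma \ref{sizetauorbitsADE}, with $h=4m-2$ and $\sigma$ the identity in type $D_{2m}$, then gives $l_i=2m-1$ for all $i$ directly. This route is even shorter, but the counting argument above is self-contained.

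The main obstacle is only bookkeeping: ensuring that each iterate $\tau^r S(j)$ is correctly identified with the indecomposable having the computed dimension vector, including the jump at vertex $3$ to the projective $P(2m)$. This identification follows from the uniqueness of indecomposables by dimension vector in finite type.
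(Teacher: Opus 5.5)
Your proof is correct, but your main argument takes a different route from the paper.

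\textbf{The paper's route.} The paper derives the lemma from the Coxeter-number pairing of Lemma \ref{sizetauorbitsADE}, namely $l_i+l_{\sigma(i)}=4m-2$, which is imported from the literature. For $m>2$, every automorphism fixes $3,\dots,2m$, and \eqref{eq7}, \eqref{eq8} give $l_1=l_2=2m-1$. For $m=2$, it gets $l_1=l_2=3$ from \eqref{eq7}, \eqref{eq8}, then $l_3=3$ from the pairing, and finally $l_4=3$ by counting the $12$ indecomposables. The alternative you sketch at the end is essentially this argument.

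\textbf{Your route.} You transplant the proof of Lemma \ref{tauorbitsofsimplestypeA}(b) to type $D$. Lemma \ref{coxeterlemma} gives $m(S(1))=m(S(2))=0$ and $m(S(j))=j-2$ for $3\le j\le 2m$. Hence $\tau^{2m-1}=0$ by Lemma \ref{tauvanishingonsimples}. The count $\sum_i o_i=2m(2m-1)$ with each $o_i\le 2m-1$ then forces equality.

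\textbf{Comparison.} The paper's version is shorter, but it rests on an external structural result and treats $m=2$ separately. Yours is self-contained within the paper and uniform in $m\geq 2$. As a byproduct it produces exactly the values \eqref{eq9} that the paper computes anyway in the proof of Lemma \ref{characteristictiltingmoduleofD}. There is no circularity, since that computation does not use the present lemma.

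\textbf{A minor point.} Your justification that $\Hom_R(M,R)=0$ for an indecomposable non-projective $M$ is phrased loosely. The claim is right, but it does not follow from $\tau M\neq 0$ alone. The clean reason is this: the image of any non-zero map $M\to R$ is a submodule of a projective, hence projective because $R$ is hereditary. So the map splits onto its image, and $M$ would be projective.
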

\begin{proof}
	 The automorphism group of the Dynkin diagram $D_{2m}$ is $\{\id, (12)\}$ when $m>2$, so by \eqref{eq7}, \eqref{eq8}	and Lemma \ref{sizetauorbitsADE} the result follows whenever $m>2$. 
	 Assume now that $m=2$. 
	 Denote by $l_i$ the number of indecomposable modules in the $\tau$-orbit of $P(i)$. By \eqref{eq7} and \eqref{eq8} we get that $l_1=l_2=3$. By Lemma \ref{sizetauorbitsADE}, $l_3=3$. Since there are 12 indecomposable modules over $kD_4$ it follows that $l_4=3$ and so the statement also holds true for $m=2$.
\end{proof}

\begin{Lemma}\label{characteristictiltingmoduleofD}
	The characteristic tilting module $T$ of $(R, Q_0, \preceq)$  is isomorphic to $\tau^{-m+1}(R)=\tau^{m-1}(DR)$. 
\end{Lemma}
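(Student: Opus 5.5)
The plan is to mirror the proof of Theorem \ref{thm4dot20}. First I will list the indecomposable summands of $T$ and then show that each sits at depth exactly $m-1$ in its $\tau$-orbit. Here depth is $m(X)=\min\{r\geq 0\colon \tau^r X\in R\proj\}$.

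For $m+1\leq i\leq 2m$ we have $\nabla(i)=S(i)$, which is $\Delta$-filtered. For $i\geq m+2$ this is because $S(i)$ is a quotient of $P(i)=\Delta(i)$ by $\rad P(i)=P(i-1)$, which lies in $\mathcal{F}(\Delta)$. For $i=m+1$ it holds directly since $S(m+1)=\Delta(m+1)$. I expect, as in type $A$, that $T(i)=\nabla(i)$ for the indices where $\nabla(i)\in\mathcal{F}(\Delta)$, and otherwise $T(i)$ is the appropriate extension. Concretely, I would guess that $T(i)=\nabla(i)$ for $m+1\leq i\leq 2m$ is false in general, and instead compute $T$ as $\tau^{-(m-1)}R$ and verify that it lies in $\mathcal{F}(\Delta)\cap\mathcal{F}(\nabla)$.

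The cleaner route is the following. Using Lemma \ref{coxeterlemma} and induction, compute $\underline{\dim}\,\tau^{-(m-1)}P(j)=\Phi^{-(m-1)}\varepsilon_j$ for each $j$. For $j=1,2$ this is given by \eqref{eq7}, \eqref{eq8} with $i=m-1$: the result is $\sum_{k=3}^{m+1}\varepsilon_k$ plus $\varepsilon_1$ or $\varepsilon_2$, which equals $\underline{\dim}\,\nabla(1)$ or $\underline{\dim}\,\nabla(2)$ depending on parity. For $3\leq j\leq m+1$, $\Phi^{-(m-1)}\varepsilon_j=\varepsilon_{j+m-1}$ as long as $j+m-1\leq 2m$, which holds. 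For $j\geq m+2$ the simple vertex $\varepsilon_{2m}$ is reached, and I apply $\Phi^{-1}\varepsilon_{2m}=-\sum\varepsilon_i$ and continue. Since these dimension vectors are positive (all $\tau^{-r}P(j)$ with $r\leq 2m-2$ are nonzero by Lemma \ref{tauorbitsinspecialcaseD}, and $\underline{\dim}\,\tau^{-1}M=\Phi^{-1}\underline{\dim}\,M$ for non-injective $M$), the formula is legitimate. This gives the $2m$ indecomposable modules $X_j=\tau^{-(m-1)}P(j)$.

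Next I verify that $X:=\bigoplus_j X_j$ lies in $\mathcal{F}(\Delta)\cap\mathcal{F}(\nabla)$; since $T$ is the unique basic tilting module with $\operatorname{add}T=\mathcal{F}(\Delta)\cap\mathcal{F}(\nabla)$, this forces $T\cong X$. First, $X$ is tilting: it is a slice, namely a $\tau^{-(m-1)}$-shift of the projectives with all modules nonzero, so $\operatorname{Ext}^1(X,X)\cong D\operatorname{Hom}(X,\tau X)=0$ by the argument in the proof of Theorem \ref{maintheoremoneimplication}. Hence it suffices to show $X\in\mathcal{F}(\nabla)$, or equivalently, by Proposition \ref{characteristictilting}, that $\operatorname{Ext}^1(T,X)=0$. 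A faster alternative is to identify each $X_j$ by dimension vector with a candidate $T(\lambda)$ and check $\Delta$- and $\nabla$-filtrations by reading off Loewy series, which is routine since all modules are small thin representations.

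By Lemma \ref{tauorbitsinspecialcaseD}, each orbit has length $2m-1$, so $\tau^{-(m-1)}P(j)=\tau^{m-1}I(\sigma j)$, giving $T=\tau^{m-1}(DR)$. The main obstacle is the bookkeeping for $j\geq m+2$, where negative coefficients appear before cancelling, and the parity split between $\nabla(1)$ and $\nabla(2)$ when $m$ is even. I would handle both by computing the orbit of $\varepsilon_{2m}$ explicitly and working backwards.
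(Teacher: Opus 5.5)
\textbf{Verdict: genuine gap.} The decisive step, that $X=\tau^{-(m-1)}R$ lies in $\mathcal{F}(\Delta)\cap\mathcal{F}(\nabla)$, is never established, and your reduction of it is invalid. A tilting module in $\mathcal{F}(\nabla)$ need not be $T$. For instance, $DR$ is tilting and lies in $\mathcal{F}(\nabla)$, but $I(2m)=S(2m)\notin\mathcal{F}(\Delta)$, since $\Delta(2m)=P(2m)$ has length $2m$. Testing $\Ext^1(T,X)=0$ also presupposes knowing $T$. So you must prove $\Ext^1(\Delta(i),X)=0$ and $\Ext^1(X,\nabla(i))=0$ separately.

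Your fallback of reading off Loewy series is not carried out, and it is not routine. The summands $\tau^{-(m-1)}P(j)$ with $3\le j\le m+1$ have dimension vector $\varepsilon_1+\varepsilon_2+2(\varepsilon_3+\cdots+\varepsilon_{m+1})+\varepsilon_{m+2}+\cdots+\varepsilon_{j+m-1}$, so they are not thin (compare $T(4)$ in the example with $m=2$).

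The missing idea is the one your first sentence points to but never uses: Lemma \ref{taumeasure}, which says $\Ext^1_R(X,Y)\neq 0$ forces $m(Y)<m(X)$. The paper shows two bounds.
\begin{itemize}
\item[(i)] $m(\Delta(i))\le m-1$ for all $i$. Every standard module is projective or equals $S(j)$ with $3\le j\le m+1$, and $m(S(j))=j-2$ because $\tau S(j)=S(j-1)$ and $\tau S(3)=P(2m)$.
\item[(ii)] $m(\nabla(i))\ge m-1$ for all $i$. The modules $\nabla(1),\nabla(2)$ have depth exactly $m-1$ by \eqref{eq7} and \eqref{eq8}. For $3\le j\le m$ there is a surjection $\nabla(1)\twoheadrightarrow\nabla(j)$, so Lemma \ref{taumeasure}(a) applies. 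For $i\ge m+1$, $\nabla(i)=S(i)$ has depth $i-2\ge m-1$.
\end{itemize}
Every indecomposable summand of $\tau^{-(m-1)}R$ has depth exactly $m-1$, so both Ext-vanishings follow at once. Having $2m$ pairwise non-isomorphic summands in $\add T$ then gives $\tau^{-(m-1)}R\cong T$. Neither explicit dimension vectors nor the tilting property are needed.

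The computations you do sketch are also wrong.
\begin{itemize}
\item[(a)] You need $\Phi^{-(m-1)}\underline{\dim}\,P(j)$, not $\Phi^{-(m-1)}\varepsilon_j$. For $j\ge 3$, $P(j)$ is not simple: $\underline{\dim}\,P(j)=\varepsilon_1+\cdots+\varepsilon_j$.
\item[(b)] Your formula gives $S(j+m-1)$ for $3\le j\le m+1$.
\item[(c)] For $j\ge m+2$ the negative entries never cancel; for example $\Phi^{-1}\varepsilon_{2m}=-\sum_i\varepsilon_i$ and $\Phi^{-2}\varepsilon_{2m}=-\varepsilon_3$. The reason is that $\varepsilon_j=\underline{\dim}\,S(j)$ with $S(j)=\tau^{-(j-2)}P(2m)$. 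Hence $\tau^{-(m-1)}S(j)=0$ once $j\ge m+2$, and $\Phi^{-1}$ stops computing $\tau^{-1}$ past the injective $S(2m)$.
\end{itemize}
Your positivity claim is therefore false for the vectors you actually wrote down.

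Your opening assertion is also false. You claim $S(i)$ is $\Delta$-filtered for $i\ge m+2$ because $S(i)=P(i)/P(i-1)$. But $\mathcal{F}(\Delta)$ is not closed under cokernels of monomorphisms. A simple $S(i)$ lies in $\mathcal{F}(\Delta)$ only if $S(i)=\Delta(i)$, whereas here $\Delta(i)=P(i)$. Taken together, these two errors would lead you to ``confirm'' the wrong modules $S(m+2),\dots,S(2m)$ as summands of $T$.
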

\begin{proof}
Given $M\in R\m$ we write $m(M):=\min\{m\geq 0\colon \tau^{m}M\ \text{is projective}\}$. By \eqref{eq7} and \eqref{eq8}, it follows that $\tau^{m-1}\Cs(i)$ is either $S(1)$ or $S(2)$ for $i=1, 2$. So $\tau^{m-1}\Cs(i)$ is projective for $i=1, 2$, that is, $m(\Cs(i))=m-1$ for $i=1, 2$. The simple modules $S(3), \ldots, S(2m)$ are all in the same $\tau$-orbit. Indeed, $\Phi \varepsilon_j=\varepsilon_{j-1}$ for $j\in \{4, \ldots, 2m\}$, that is $\tau S(j)=S(j-1)$. 
Thus $m(S(j))=1+m(S(j-1))$ for $j=4, \ldots, 2m$. By Lemma \ref{coxeterlemma}, $\tau S(3)=P(2m)$. By induction, it follows that
\begin{align}
	m(S(j))=j-2, \qquad 3\leq j\leq 2m. \label{eq9}
\end{align}
It follows that $m(\St(i))\leq m-1$ for every $i\in Q_0$. Thus, if $\Ext^1(\St(i), \tau^{-(m-1)}(R))\neq 0$ for some $i\in Q_0$, then by Lemma \ref{taumeasure}, we would get $m-1=m(\tau^{-(m-1)}(R))<m(\St(i))\leq m-1$. Hence, $\tau^{-(m-1)}(R)\in \mathcal{F}(\Cs)$. 

Observe that $\Cs(1)\twoheadrightarrow \Cs(i)$ for all $i=3, \ldots, m$. By Lemma \ref{taumeasure}, $m-1=m(\Cs(1))\leq m(\Cs(i))$ for all $i=3, \ldots, m$. By \eqref{eq9}, 
$m(\Cs(i))=m(S(i))=i-2\geq m-1$ for $m+1\leq i\leq 2m.$ So, $m(\Cs(i))\geq m-1$ for all $i\in Q_0$. By Lemma \ref{taumeasure}, it follows that $\tau^{-(m-1)}(R)\in {}^{\perp} \mathcal{F}(\Cs)$ and thus $\tau^{-(m-1)}(R)\in \mathcal{F}(\St)\cap \mathcal{F}(\Cs)$. Since $\tau^{-(m-1)}(R)$ contains exactly $2m$ non-isomorphic indecomposable modules $\tau^{-(m-1)}(R)$ is isomorphic to the characteristic tilting module. By Lemma \ref{tauorbitsinspecialcaseD}, $\tau^{-(m-1)}(\tau^{-(m-1)}(R))=\tau^{-(2m-2)}(R)=DR$ and thus the result follows.
\end{proof}

\begin{proof}[Proof of Theorem \ref{RingelselfdualityforD}]
	The result follows from Lemma \ref{characteristictiltingmoduleofD} and Theorem \ref{maintheoremoneimplication}. 
\end{proof}

\begin{Example}
	For $m=2$, $Q$ corresponds to the quiver
	\begin{equation}\label{eq:D2m-quiver}
		\begin{tikzcd}[row sep=small, column sep=small]
			 & & 1 \\
			4 \ar[r] & 3 \ar[ru]\ar[rd] & \\
			 & &  2
		\end{tikzcd}
	\end{equation} and $R$ is the path algebra $kQ$, where $Q_0$ is equipped with the order $\preceq$ determined by $4 \;\succ\; \{1, 2\} \;\succ\; 3$.
The Auslander-Reiten quiver of $R$ is the following

\noindent\makebox[\linewidth][l]{\resizebox{\textwidth}{!}{%
	\begin{tikzpicture}[x=3.3cm,y=1.5cm]
	
	\node[mod,projcol] (nP1) at (-0.2, 2.4) {$1$};
	\node[mod,projcol] (nP2) at (-0.2, 1.2) {$2$};
	\node[mod,projcol] (nP3) at (0.8, 0)   {$\Loe{3\\1\;\,2}$};
	\node[mod,projcol] (nP4) at (1.8,-1.4) {$\Loe{4\\3\\1\;\,2}$};
	
	\node[mod,tiltcol] (nT2) at (1.8, 2.4) {$\Loe{3\\2}$};
	\node[mod,tiltcol] (nT1) at (1.8, 1.2) {$\Loe{3\\1}$};
	\node[mod,tiltcol] (nT4) at (2.8, 0)   {$T(4)$};
	\node[mod,tiltcol] (nT3) at (3.8,-1.4) {$3$};
	
	\node[mod,injcol] (nI1) at (3.8, 2.4) {$\Loe{4\\3\\1}$};
	\node[mod,injcol] (nI2) at (3.8, 1.2) {$\Loe{4\\3\\2}$};
	\node[mod,injcol] (nI3) at (4.8, 0)   {$\Loe{4\\3}$};
	\node[mod,injcol] (nI4) at (5.8,-1.4) {$4$};

	\node[stdmark,minimum size=2cm] (mP1) at (nP1) {};   
	\node[stdmark,minimum size=2cm] (mP2) at (nP2) {};   
	\node[stdmark,minimum size=2.4cm] (mP4) at (nP4) {};   
	\node[stdmark,minimum size=2cm,yshift=-2.5mm] (mT3) at (nT3) {};
	
	\node[costdmark,minimum size=2cm] (mT2)  at (nT2) {};
	\node[costdmark,minimum size=2cm] (mT1)  at (nT1) {};
	\node[costdmark,minimum size=2cm] (mI4)  at (nI4) {};
	\node[costdmark,minimum size=2cm,yshift=2.5mm] (mT3b) at (nT3) {};
	
	\draw[ar] (mP1) -- (nP3);
	\draw[ar] (mP2) -- (nP3);
	\draw[ar] (nP3) -- (mP4);
	\draw[ar] (nP3) -- (mT2);
	\draw[ar] (nP3) -- (mT1);
	\draw[ar] (mP4) -- (nT4);
	\draw[ar] (mT2) -- (nT4);
	\draw[ar] (mT1) -- (nT4);
	\draw[ar,shorten >=5pt] (nT4) -- (nT3);
	\draw[ar] (nT4) -- (nI1);
	\draw[ar] (nT4) -- (nI2);
	\draw[ar,shorten <=5pt] (nT3) -- (nI3);
	\draw[ar] (nI1) -- (nI3);
	\draw[ar] (nI2) -- (nI3);
	\draw[ar] (nI3) -- (mI4);
	
	\draw[taua] (mT2) -- node[above,gray,font=\small] {$\tau$} (mP1);
	\draw[taua] (nI1) -- node[above,gray,font=\small] {$\tau$} (mT2);
	\draw[taua] (mT1) -- node[above,gray,font=\small] {$\tau$} (mP2);
	\draw[taua] (nI2) -- node[above,gray,font=\small] {$\tau$} (mT1);
	\draw[taua] (nT4) -- node[above,gray,font=\small] {$\tau$} (nP3);
	\draw[taua] (nI3) -- node[above,gray,font=\small] {$\tau$} (nT4);
	\draw[taua,shorten <=5pt] (nT3) -- node[above,gray,font=\small] {$\tau$} (mP4);
	\draw[taua,shorten >=5pt] (mI4) -- node[above,gray,font=\small] {$\tau$} (nT3);
\end{tikzpicture}}}

Here, the standard (resp. costandard) modules are encircled by $\Delta$ (resp. $\Cs$), at blue the modules in $\cogen \tau T$, at green the modules in $\gen \tau^{-1}T$, and at red the modules in the additive closure of the characteristic tilting module $T$. The module $T(4)$ corresponds is the indecomposable module defined by the property $\dv T(4)=\varepsilon_1+\varepsilon_2+2\varepsilon_3+\varepsilon_4$.
The indecomposable modules of $\mathcal{F}(\St)$ (resp. $\mathcal{F}(\Cs)$) are the modules displayed as blue or red (resp. green or red). The action of $\tau$ on the Auslander-Reiten quiver is displayed with dashed arrows.
By Theorem \ref{maintheoremoneimplication}, it is clear that the algebra is Ringel self-dual and the exact equivalence $\Psi\colon \mathcal{F}(\St)\rightarrow \mathcal{F}(\Cs)$ satisfies
\[\Psi(\St(1))=\Cs(2), \quad \Psi(\St(2))=\Cs(1), \quad \Psi(\St(3))=\Cs(4), \quad \Psi(\St(4))=\Cs(3).  \qedhere  \]
\end{Example}

This case finishes the proof of Theorem \ref{maintheoremBproof}:

\begin{proof}[Proof of Theorem \ref{maintheoremBproof}]
	A simply laced Dynkin diagram is either a diagram of type $A$, $D$ or $E$. So the result follows by combining Theorem \ref{thm4dot20}, Theorem \ref{RingelselfdualityforD}, Corollary \ref{RsdfortypeE78}, Corollary \ref{cor4dot5}, Corollary \ref{corD4} and Corollary \ref{excludingtypeA}.
\end{proof}

The Ringel self-dual algebras with underlying quiver of type $D_4$ always have a characteristic tilting module like in type $A$, that is, it is isomorphic to a power of the inverse Auslander-Reiten translation of the regular module.

\begin{Prop}\label{tiltingcaseD4}
	Fix an orientation on $D_4$ and a partial order $\leq$ on $Q_0=\{1, 2, 3, 4\}$ so that $(A, Q_0, \leq)$ is Ringel self-dual, where $A:=kD_4$. Then every $\tau$-orbit contains exactly $3$ indecomposable modules and the characteristic tilting module is $\tau^{-1}(A)\cong \tau (DA)$.
\end{Prop}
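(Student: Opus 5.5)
Everything follows from Lemma~\ref{QuiveraddT} once we know that all $\tau$-orbits have equal length. Let $T$ be the characteristic tilting module. For $i\in Q_0$ write $T_i=\tau^{-p_i}P(i)$ for the summand of $T$ in the $\tau$-orbit of $P(i)$, and let $q_i$ be such that $\tau^{-q_i}T_i$ is injective. Lemma~\ref{QuiveraddT}(b) gives a graph automorphism $\sigma$ of $D_4$ with $p_i=q_{\sigma(i)}$. We label $D_4$ with central vertex $3$ and leaves $1,2,4$. The orbit lengths are $l_i=p_i+q_i+1=p_i+p_{\sigma^{-1}(i)}+1$.

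\emph{Equal orbit lengths.} $D_4$ has $12$ indecomposables and $h_Q=6$. By Lemma~\ref{sizetauorbitsADE} the orbits pair up via an automorphism $\sigma'$ with $l_i+l_{\sigma'(i)}=6$. For $D_{2n}$ that lemma says $\sigma'=\id$. Hence $l_i=3$ for every $i$, whatever the orientation, so every $\tau$-orbit has exactly $3$ indecomposables. This part requires no use of Ringel self-duality.

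\emph{Locating $T$.} It remains to show $p_i=1$ for all $i$. Then $T\cong\tau^{-1}(A)$, and the equal orbit lengths give $\tau^{-1}(A)\cong\tau(DA)$. From $l_i=3$ we get $p_i+p_{\sigma^{-1}(i)}=2$ for all $i$, with each $p_i\in\{0,1,2\}$.

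\emph{Fixed points of $\sigma$.} Every automorphism of $D_4$ fixes the central vertex $3$, so $2p_3=2$ and $p_3=1$. By Claim~\ref{claim3} in the proof of Lemma~\ref{QuiveraddT}, adjacent vertices $i\to j$ satisfy $p_j-p_i\in\{0,1\}$, so each leaf $x$ has $p_x\in\{0,1,2\}$. If $\sigma$ fixes a leaf $x$, then $2p_x=2$ and $p_x=1$.

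\emph{Non-fixed leaves.} The main obstacle is the leaves that $\sigma$ moves, where the constraint $p_x+p_{\sigma^{-1}(x)}=2$ alone would allow the values $0$ and $2$.
\begin{itemize}
\item If $\sigma$ contains a transposition $(x\,y)$ of leaves, then $p_x+p_y=2$, so $\{p_x,p_y\}=\{0,2\}$ or both equal $1$.
\item If $\sigma$ is a $3$-cycle on the leaves, then the three leaf values are $p,2-p$ and again $p$ around the cycle, which forces $p=1$.
\end{itemize}
So only the transposition case needs further work. Suppose $p_x=0$ and $p_y=2$. By Claim~\ref{claim3}, $p_x=0$ with $p_3=1$ means the arrow goes $x\to3$ with $T_x=P(x)$. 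Likewise $p_y=2$ means $3\to y$ and $T_y=\tau^{-2}P(y)$, which is injective.

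The plan is to contradict this with Proposition~\ref{prop}(d) together with the equivalence $\Psi$. We have $\Psi P(i)\cong T_{\sigma(i)}$ and $\Psi T_i\cong I(\sigma(i))$. Then $\Psi T_x=\Psi P(x)$ would have to be both $T_y$ and $I(y)$, while $T_y\cong I(y)$ is consistent with this. So we need a finer argument. Since $P(x)\in\add T$ is projective and lies in $\mathcal{F}(\St)\cap\mathcal{F}(\Cs)$, apply $\Psi^{-1}$: $\Psi^{-1}(P(x))$ is injective in $\mathcal{F}(\St)$ and projective, so it lies in $\add T\cap\add A$. Iterating this along the $\sigma$-orbit should show that both $P(x)$ and $P(y)$ are projective-injective in the relevant exact structures. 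For a non-semisimple connected hereditary algebra this would force a projective summand of $T$ to coincide with $\Psi$ of a projective in a way incompatible with $p_y=2$, for instance via $\Hom_A(P(x),P(3))\neq0$ and Proposition~\ref{prop}(d).

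If this argument does not close, the fallback is a direct check. Up to the symmetry, only finitely many orientations of $D_4$ have a sink leaf $x$ and a source leaf $y$. For each, compute the slice $\{P(x),T_3=\tau^{-1}P(3),T_y=I(y),\dots\}$ and verify that it is not $\mathcal{F}(\St)\cap\mathcal{F}(\Cs)$ for any order. The check uses the fact that a minimal $\lambda$ has $T(\lambda)=S(\lambda)$, and that $\mathcal{F}(\St)$ being closed under $\tau$ (Corollary~\ref{closedundertau}) fails.
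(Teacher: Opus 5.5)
There is a genuine gap. Up to the last case your argument is the paper's. The orbit length $3$ from Lemma~\ref{sizetauorbitsADE}, the relations $p_i+p_{\sigma^{-1}(i)}=2$, $p_3=1$, and the identity and $3$-cycle cases are all handled correctly. The gap is the transposition case $\sigma=(x\,y)$ with $p_x=0$, $p_y=2$, which you rightly single out as the crux but never settle.

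Your $\Psi$-argument breaks at a concrete step. $P(x)=T_x$ lies in $\add T$, so it is a \emph{projective} object of $\mathcal{F}(\Cs)$; it is not an injective module. Hence $\Psi^{-1}(P(x))$ is a projective module, namely $P(y)$, because $\Psi P(y)\cong T_{\sigma(y)}=T_x$. But $P(y)\notin\add T$ since $p_y=2$, so the claim $\Psi^{-1}(P(x))\in\add T\cap\add A$ is false. More fundamentally, in this configuration the data $(\sigma,(p_i))$ satisfy every relation that Lemma~\ref{QuiveraddT} extracts from $\Psi$. As you noticed yourself, $\Psi T_x\cong I(y)\cong T_y$ is consistent. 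So no argument using only $\Psi$ and this combinatorics can produce a contradiction; you need information about the actual modules on the slice. Nor does $\tau$-closure of $\mathcal{F}(\St)$ help. If $T$ were this slice, then $\mathcal{F}(\St)=\cogen T$ would consist of the modules on or to the left of the slice, and that class is automatically $\tau$-closed.

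Your fallback is precisely the paper's proof, but you have not carried it out. First, $x\to 3$ makes $x$ a source and $3\to y$ makes $y$ a sink; you wrote the reverse. Since $x\to 3\to y$ is forced by Claim~\ref{claim3}, only the edge between $3$ and the fixed leaf $z$ is undetermined. One of its two orientations reduces to the other by passing to $A^{op}$ (again Ringel self-dual, with characteristic tilting module $DT$) and swapping the labels $x$ and $y$; alternatively, treat both cases. Computing the Auslander--Reiten quiver then shows that none of the four candidate summands $P(x)$, $\tau^{-1}P(3)$, $\tau^{-1}P(z)$, $I(y)$ is simple. This contradicts $T(\lambda)=\St(\lambda)=S(\lambda)$ for a minimal $\lambda$. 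Hence $p_x=p_y=1$, and the proposition follows.
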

\begin{proof}
	By Lemma \ref{sizetauorbitsADE}, every $\tau$-orbit contains exactly $3$ indecomposable modules. The automorphism group of $D_4$ is the symmetric group $S_3$. Let $\sigma$ be the automorphism graph given in Lemma \ref{QuiveraddT} and consider the same notation as in Lemma \ref{QuiveraddT}. Assume that $3$ is the central vertex. If $\sigma$ is a $3$-cycle, then  Lemma \ref{QuiveraddT} yields $3$ equations $2=p_i+p_{\sigma^{-1}(i)}$ for $i=1, 2, 4$ and $p_3=1$. It follows that $p_1=p_4=p_2=1$. So, the characteristic tilting module is of the desired form. If $\sigma$ is the identity, then also $p_i=1$ for all $i\in \{1, 2, 3, 4\}$. So it remains to check what happens for $\sigma\in \{(12), (14), (24)\}$. Up to relabelling of the vertices, we can assume without loss of generality that $\sigma=(12)$. Then $p_4=p_3=1$ and $p_1+p_2=2$. Assume that $p_1=0$ and $p_2=2$. So $T_1$ is projective and $T_2$ is injective. Hence, $T_1=P(1)$ and $T_2=I(2)$. Also $p_3=p_1+1$ and $p_3=p_2-1$, thus by Lemma \ref{QuiveraddT} the orientation must be $1\rightarrow 3\rightarrow 2$. Without loss of generality we can assume that $Q$ corresponds to 
	\begin{equation}
		\begin{tikzcd}[row sep=small, column sep=small]
			& & 1  \ar[dl]\\
			4 \ar[r] & 3\ar[rd] & \\
			& &  2
		\end{tikzcd},
	\end{equation} otherwise we could just consider the opposite algebra and relabel the vertices. Thus the candidate for characteristic tilting module is $P(1)\oplus I(2)\oplus \tau^{-1} P(3)\oplus \tau^{-1}P(4)$. But none of these direct summands is a simple module since $\dim \tau^{-1}P(4)=2$, $\dim \tau^{-1}P(3)=5$, $\dim I(2)=3=\dim P(1)$. So such a case cannot occur and so for transpositions we also have $p_1=p_2=p_3=p_4=1$.
\end{proof}

\begin{Cor}\label{corCinternal}
	Let $(A, \Lambda, \leq)$ be a (connected and basic) quasi-hereditary $k$-algebra with $\gldim A\leq 1$ whose Gabriel quiver is not of type $D_{2m}$ for $m>2$. Then
	$(A, \Lambda, \leq)$ is Ringel self-dual and it is an algebra of finite representation type if and only if the (basic) characteristic tilting module $T$ is isomorphic to $\tau^{-n}(A)=\tau^n(DA)$ for some $n\in \mathbb{N}\cup \{0\}$.
\end{Cor}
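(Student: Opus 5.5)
The converse direction is exactly Theorem \ref{maintheoremoneimplication}(c) for $n\geq 1$. For $n=0$ we have $T\cong A\cong DA$, so $A$ is semisimple, hence Ringel self-dual and of finite representation type; since $A$ is connected it is then $k$. So all the work is in the forward direction. Assume $(A,\L,\leq)$ is Ringel self-dual and of finite representation type. Since $A$ is basic, connected and hereditary, $A\cong kQ$ with $Q$ a Dynkin quiver. By Theorem \ref{maintheoremBproof}, the underlying graph of $Q$ is of type $A_{4t+1}$ or $D_{2m}$. The hypothesis excludes $D_{2m}$ with $m>2$, so only three cases remain: $A_1$, $A_{4t+1}$ with $t\geq 1$, and $D_4$.

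The case $A_1$ is trivial: $A=k$ and $T=A=DA$, so $n=0$ works. For $A_{4t+1}$ with $t\geq 1$, Proposition \ref{Proptauorbitsprediction} gives directly that every $\tau$-orbit has $2t+1$ modules and that $T\cong\tau^{-t}(A)=\tau^{t}(DA)$. The identification $\tau^{-t}(A)\cong\tau^t(DA)$ holds because each orbit has length $2t+1$ and, by Lemma \ref{sizetauorbitsADE}, $\tau^{-2t}P(i)\cong I(\sigma(i))$. For $D_4$, Proposition \ref{tiltingcaseD4} gives $T\cong\tau^{-1}(A)\cong\tau(DA)$. In each case $T$ has the stated form, and the corollary follows.

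The only delicate point is bookkeeping. We should check that Proposition \ref{Proptauorbitsprediction} covers every orientation and order in type $A_{4t+1}$; it does, since it only assumes Ringel self-duality. We should also note that the identity $\tau^{-n}(A)=\tau^n(DA)$ comes from all orbits having length $2n+1$. Beyond that, all the substance is already contained in the cited results.
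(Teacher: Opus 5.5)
Your proof is correct and follows the paper's own argument: the paper also obtains the corollary by using Theorem \ref{maintheoremoneimplication}(c) for the converse, and by combining the classification Theorem \ref{maintheoremBproof} with Propositions \ref{Proptauorbitsprediction} and \ref{tiltingcaseD4} for the forward direction. You also treat the degenerate cases $n=0$ and $A_1$ explicitly (there $A\cong DA$ forces $A$ to be semisimple, hence $A=k$), which makes precise what the paper leaves to the remark following Theorem \ref{maintheoremoneimplication}.
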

\begin{proof}
	The result follows from Propositions \ref{tiltingcaseD4}, \ref{Proptauorbitsprediction} and Theorems \ref{maintheoremoneimplication} and \ref{maintheoremBproof}.
\end{proof}

In type $D$, Lemma \ref{QuiveraddT} is not enough to enforce that the characteristic tilting module is isomorphic to a power of the inverse Auslander-Reiten translation of the regular module, that is, the analogue of Proposition \ref{Proptauorbitsprediction} for type $D_{2m}$ with $m>2$ involves a twist induced by the non-trivial graph automorphism of $D_{2m}$.

\begin{Prop} Let $Q$ be a quiver with underlying Dynkin diagram $D_{2m}$ with $m>2$, let $A$ be the path algebra $kQ$ and $\leq $ a partial order on $Q_0$ so that $(A, Q_0, \leq)$ is Ringel self-dual. If $T$ is the (basic) characteristic tilting module, then $T\cong \displaystyle X\oplus \bigoplus_{i=3}^{2m} \tau^{-m+1}P(i)\cong Y\oplus \bigoplus_{i=3}^{2m} \tau^{m-1}I(i)$,  where $X\cong \tau^{-p} P(1)\oplus \tau^{-(2m-2-p)}P(2)$ and $Y\cong \tau^{2m-2-p}I(1)\oplus \tau^{p}I(2)$ with $p\in \{m-2, m-1, m\}$.\label{proptwistedcaseD}
\end{Prop}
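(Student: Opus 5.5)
The plan is to read everything off from Lemma \ref{QuiveraddT} together with Lemma \ref{sizetauorbitsADE}. Label $D_{2m}$ as in the paper: $1$ and $2$ are the two short legs attached to the branch vertex $3$, and $3,4,\ldots,2m$ is the long arm. As in Lemma \ref{QuiveraddT}, for each $i\in Q_0$ write $T_i=\tau^{-p_i}P(i)$ for the summand of $T$ in the $\tau$-orbit of $P(i)$. Let $q_i$ be such that $\tau^{-p_i-q_i}P(i)$ is injective.

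First I determine the orbit lengths. Lemma \ref{sizetauorbitsADE} says that the automorphism $\sigma_0$ attached to type $D_{2m}$ is the identity. Hence $2l_i=h_Q=4m-2$, so every $\tau$-orbit has length $l_i=2m-1$. Moreover the injective in the orbit of $P(i)$ is $I(i)$. Consequently $p_i+q_i=2m-2$ and $T_i\cong \tau^{q_i}I(i)$ for every $i$.

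Next, Lemma \ref{QuiveraddT}(b) supplies a graph automorphism $\sigma$ with $p_i=q_{\sigma(i)}$. Since $m>2$, the automorphism group of $D_{2m}$ is $\{\id,(12)\}$, and in particular $\sigma$ fixes every vertex $i\geq 3$. For $i\geq 3$ this gives $p_i=q_i$, and combined with $p_i+q_i=2m-2$ we get $p_i=q_i=m-1$. This yields the summands $\tau^{-(m-1)}P(i)\cong\tau^{m-1}I(i)$ for $3\le i\le 2m$.

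For the two short legs, set $p:=p_1$. If $\sigma=\id$, then $p_1=q_1$, so $p=m-1=p_2$. If $\sigma=(12)$, then $p_2=q_1=2m-2-p$ and $q_2=p_1=p$. In both cases $X\cong\tau^{-p}P(1)\oplus\tau^{-(2m-2-p)}P(2)$. Using $T_i\cong\tau^{q_i}I(i)$, we also get $Y\cong\tau^{q_1}I(1)\oplus\tau^{q_2}I(2)=\tau^{2m-2-p}I(1)\oplus\tau^{p}I(2)$.

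The remaining point, and the only one needing a little care, is the restriction $p\in\{m-2,m-1,m\}$. For this I use Claim \ref{claim3} from the proof of Lemma \ref{QuiveraddT}: for an arrow $i\to j$ one has $p_j-p_i\in\{0,1\}$. This claim uses only that $T$ is a complete slice (Proposition \ref{counting}), so it holds regardless of $\sigma$. Applying it to the edge $\{1,3\}$, with either orientation, gives $|p_1-p_3|\le 1$. Since $p_3=m-1$, this forces $p\in\{m-2,m-1,m\}$, and then $p_2=2m-2-p$ lies in the same set, consistently with the edge $\{2,3\}$.
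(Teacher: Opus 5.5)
Your proposal is correct. Its first half is the paper's argument: Lemma~\ref{sizetauorbitsADE} gives orbits of length $2m-1$ with $\tau^{-(2m-2)}P(i)\cong I(i)$, and Lemma~\ref{QuiveraddT}(b)(i) gives $p_i=q_{\sigma(i)}$. Since $\sigma\in\{\id,(12)\}$, this yields $p_i=m-1$ for $i\geq 3$ and $p_1+p_2=2m-2$.

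You depart from the paper in bounding $p$. The paper does a case analysis over $\sigma$ and over the orientation of the two arrows joining $3$ to $1$ and $2$, using Lemma~\ref{QuiveraddT}(b)(ii) and (b)(iii):
\begin{itemize}
\item if $\sigma=\id$, or if the two arrows both point towards $3$ or both away from $3$, then $p=m-1$;
\item if $\sigma=(12)$ and $1\to 3\to 2$, then $p=m-2$;
\item if $\sigma=(12)$ and $2\to 3\to 1$, then $p=m$.
\end{itemize}
You instead apply Claim~\ref{claim3} once to the edge $\{1,3\}$. That claim needs only that $T$ is a complete slice, and it gives $|p_1-p_3|\leq 1$ uniformly.

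Your route is shorter. It needs no orientation bookkeeping and does not use the $\sigma^2=\id$ preservation and reversal statements. However, it yields only the range $\{m-2,m-1,m\}$ that the proposition asserts. The paper's case analysis also records which value of $p$ occurs for which $\sigma$ and orientation. In particular, for a directed path through $3$, Claim~\ref{claim3} alone cannot decide between $p=m-1$ and the extreme values, whereas part (b)(iii) shows that the twist $\sigma=(12)$ forces the extreme value.
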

\begin{proof} 
	Since $A$ is hereditary algebra of finite representation type, we can write the characteristic tilting module $T$ of $A$ is isomorphic to $\oplus_{i=1}^{2m} \tau^{-p_i} P(i)$ for some $p_i\in \mathbb{N}\cup \{0\}$. By (b)(ii) of Lemma \ref{QuiveraddT} and Lemma \ref{sizetauorbitsADE},
	\begin{equation}
		p_i+p_{\sigma(i)}+1=2m-1, \quad \forall i=1, \ldots, 2m, \label{eq15}
	\end{equation} $\sigma$ is a graph automorphism of $D_{2m}$. Assuming that $D_{2m}$ is labelled like in (8), the possible graph automorphisms are then the identity or the permutation $(12)$. Thus, by \eqref{eq15} we get that $p_i=m-1$ for every $i=3, \ldots, 2m$. If $\sigma=\id$, then Lemma \ref{QuiveraddT} (b)(ii) gives that $p_1=p_3=p_2=m-1$. Assume now that $\sigma=(12)$. If $1\rightarrow 3\leftarrow 2$ or $1\leftarrow 3\rightarrow 2$ is a subquiver of $Q$, then $(12)$ preserves the orientation of these arrows and Lemma \ref{QuiveraddT} b(ii) gives again that $p_1=p_3=p_2=m-1$. On the other hand, if $1\rightarrow 3\rightarrow2$ then $(12)$ reverses the orientation of these two arrows and so we get $m-1=p_3=p_1+1$ and $p_2=m$ by Lemma \ref{QuiveraddT} (b)(iii). In contrast, if $1\leftarrow 3\leftarrow 2$, then Lemma \ref{QuiveraddT} (b)(iii) gives $p_1=p_3+1=m$ and $p_2=m-2$.
So, the (basic) characteristic tilting module is isomorphic to $X\oplus \oplus_{i=3}^{2m} \tau^{-m+1}P(i)$, where $X\cong \tau^{-p_1}P(1)\oplus \tau^{-p_2}P(2)$ with $p_1+p_2=2m-2$ and $p_1\in \{m-2, m-1, m\}$.
 The second claimed isomorphism follows then by Equation \eqref{eq15}  and Lemma \ref{QuiveraddT}.  \end{proof}

\subsection{Open questions}

We leave it as an open problem to determine the full list of orientations of $Q\in \{D_{2m}, A_{4m+1}\}$ for which  there exists a partial ordering $\leq$ on $Q_0$ such that $(kQ, Q_0, \leq)$ is Ringel self-dual. Moreover, we can pose the following questions:

\begin{itemize}
	\item How many different orientations of $A_{4m+1}$ yield a path algebra with a Ringel self-dual quasi-hereditary structure?
	\item How many different orientations of $D_{2m}$ yield a path algebra with a Ringel self-dual quasi-hereditary structure?
	\item Is there a version of Theorem \ref{maintheoremoneimplication} that covers the twisted cases of type $D$?
	\item What happens to hereditary algebras of infinite representation type under Ringel self-duality? 
\end{itemize}

\section*{Acknowledgments}
The author thanks Steffen Koenig for his comments on an earlier version of  this manuscript.

\bibliographystyle{alpha}
\bibliography{bibarticle}

\Address
\end{document}